\numberwithin{equation}{section}
\newcommand{\beq}{\begin{equation}}
\newcommand{\eeq}{\end{equation}}
\newcommand{\ben}{\begin{eqnarray}}
\newcommand{\een}{\end{eqnarray}}
\newcommand{\beno}{\begin{eqnarray*}}
\newcommand{\eeno}{\end{eqnarray*}}
\newtheorem{theorem}{Theorem}[section]
\newtheorem{lemma}[theorem]{Lemma}
\newtheorem{proposition}[theorem]{Proposition}
\newtheorem{corol}[theorem]{Corollary}
\newtheorem{example}[theorem]{Example}
\newtheorem{remark}[theorem]{Remark}
\begin{document}
\begin{CJK*}{UTF8}{gkai}
\title[Dynamics on the space of probability measures]{Dynamics of continuous maps  induced on the space of
probability measures}

\author{Hua Shao}
\address{Department of Mathematics, Nanjing University of Aeronautics and Astronautics,
 211106, Nanjing, P. R. China}
\email{huashao@nuaa.edu.cn}

\author{Hao Zhu}
\address{Chern Institute of Mathematics, Nankai University, 300071, Tianjin, P. R. China}
\email{haozhu@nankai.edu.cn}

\author{Guanrong Chen}
\address{Department of Electrical Engineering, City University of Hong Kong, Hong Kong SAR, P.~R. China}
\email{eegchen@cityu.edu.hk}

\date{\today}

\maketitle

\begin{abstract}
For   a continuous self-map $f$  on a compact interval $I$ and  the induced map $\hat f$ on the space $\mathcal{M}(I)$ of
probability measures,
we obtain a sharp condition  to guarantee that $(I,f)$ is transitive if and only if $(\mathcal{M}(I),\hat f)$ is transitive.
We also show that the sensitivity of $(I,f)$ is equivalent to that of $(\mathcal{M}(I),\hat f)$.
We prove that $(\mathcal{M}(I),\hat f)$ must have infinite topological entropy for any transitive system  $(I,f)$, while there exists a transitive non-autonomous system $(I,f_{0,\infty})$ such that $(\mathcal{M}(I),\hat f_{0,\infty})$ has zero topological entropy, where $f_{0,\infty}=\{f_n\}_{n=0}^\infty$ is a sequence of continuous self-maps on $I$. For a continuous self-map $f$ on a general compact metric space $X$, we show that
chain transitivity of  $(X, f)$ implies chain mixing of  $(\mathcal{M}(X),\hat f)$, and we  provide two counterexamples to demonstrate that the converse is not true.
We confirm that shadowing of $(X,f)$ is not inherited by $(\mathcal{M}(X),\hat f)$ in general.
For a non-autonomous system $(X,f_{0,\infty})$, we prove that if $(\mathcal{M}(X),\hat{f}_{0,\infty})$ is
weak mixing of order $n$, then  so is $(X,f_{0,\infty})$ for any $n\geq2$; while  there exists  $(X,f_{0,\infty})$ such that it is
weak mixing of order $2$ but
 $(\mathcal{M}(X),\hat{f}_{0,\infty})$ is not.
We then prove that Li-Yorke chaos (resp., distributional chaos) of $(X,f_{0,\infty})$ carries over to $(\mathcal{M}(X),\hat f_{0,\infty})$, and
 give an example to show that $(X,f)$ and $(\mathcal{M}(X),\hat f)$ may have no Li-Yorke pair simultaneously.
We  also prove that if $f_n$ is surjective for all $n\geq 0$, then chain mixing of $(\mathcal{M}(X),\hat f_{0,\infty})$ always holds true, and
 shadowing of $(\mathcal{M}(X),\hat f_{0,\infty})$ implies  mixing of  $(X, f_{0,\infty})$.
\end{abstract}

{\bf  Keywords}:  Probability measure; induced system; transitivity;  mixing; sensitivity;
Li-Yorke chaos.

{2010 {\bf  Mathematics Subject Classification}}: 37A50, 54H20, 	37B55, 60B05.

\section{Introduction}
Let $X$ be a compact metric space with metric $d$.
A  non-autonomous (topological) dynamical system is a pair $(X,f_{0,\infty})$, where $f_{0,\infty}=\{f_n\}_{n=0}^{\infty}$
is a sequence of continuous self-maps on $X$. For any $x_0\in X$, the (positive) orbit $\{x_n\}_{n=0}^{\infty}$ of $(X,f_{0,\infty})$ starting from $x_0$ is defined by $x_n=f_0^n(x_0)$, where $f_0^n=f_{n-1}\circ\cdots\circ f_{0}$, $n\geq 1$. Note that if $f_n=f$ for all $n\geq0$,
then $(X, f_{0,\infty})$ becomes the autonomous  dynamical system  $(X,f)$.

Let $\mathcal{B}(X)$ be the $\sigma$-algebra of all Borel subsets of $X$ and $\mathcal{M}(X)$ be the space of all
Borel probability measures on $X$. The Prohorov metric $\mathcal{P}_{d}$ on $\mathcal{M}(X)$ is defined by
\begin{align*}
\mathcal{P}_{d}(\mu,\nu)=\inf\{\epsilon>0: \mu(A)\leq\nu(A^{\epsilon})+\epsilon\;\;{\rm and}\; \nu(A)\leq\mu(A^{\epsilon})+\epsilon,
\;A\in\mathcal{B}(X)\}
\end{align*}
for $\mu,\nu\in\mathcal{M}(X)$, where $A^{\epsilon}=\{x\in X: d(x,A)<\epsilon\}$. It was proved in \cite{Strassen} that
\begin{align}\label{measure-metric-def}
\mathcal{P}_{d}(\mu,\nu)=\inf\{\epsilon>0: \mu(A)\leq\nu(A^{\epsilon})+\epsilon,\;A\in\mathcal{B}(X)\}.
\end{align}
The topology induced by the metric $\mathcal{P}_{d}$ coincides with the weak$^{*}$-topology for measures.
$(X, f_{0,\infty})$ induces $(\mathcal{M}(X),\hat{f}_{0,\infty})$, where $\hat{f}_{0,\infty}=\{\hat{f}_n\}_{n=0}^{\infty}$,
with $\hat{f}_n: \mathcal{M}(X)\to \mathcal{M}(X)$ defined by
\begin{align}\label{1.1}
\hat{f}_n(\mu)(A)=\mu\big(f_{n}^{-1}(A)\big),\;\mu\in \mathcal{M}(X),\;A\in\mathcal{B}(X).
\end{align}
Denote $\hat{f}_{0}^{n}=\hat{f}_{n-1}\circ\cdots\circ\hat{f}_{0}$ and $f_{0}^{-n}=(f_{0}^{n})^{-1}$. Then (\ref{1.1}) ensures that
\begin{align*}
\hat{f}_{0}^{n}(\mu)(A)=\mu\big(f_{0}^{-n}(A)\big),\;\mu\in\mathcal{M}(X),\;A\in\mathcal{B}(X),\;n\geq1.
\end{align*}
$(\mathcal{M}(X),\mathcal{P}_{d})$ is a compact metric space and $\hat{f}_n$ is continuous on $\mathcal{M}(X)$ for
$n\geq0$ (see \cite{Bauer,Parthasarathy} for more details).

$(\mathcal{M}(X),\hat{f}_{0,\infty})$ is a dynamical system with deterministic dynamics and  stochastic configurations in  statistical mechanics.
It is natural to consider what kind of  dynamics of  $(X,f_{0,\infty})$ can be inherited by $(\mathcal{M}(X),\hat{f}_{0,\infty})$, and
conversely how the dynamical behaviors of $(\mathcal{M}(X),\hat{f}_{0,\infty})$ affect those of $(X,f_{0,\infty})$.
 In 1975, Bauer and Sigmund  \cite{Bauer} introduced the induced dynamical  system $(\mathcal{M}(X),\hat{f})$ of an autonomous dynamical system $(X,f)$, and studied
  systematically what  topological properties of $(X,f)$  can carry over to $(\mathcal{M}(X),\hat{f})$.
Since then, the study on the interrelations of dynamics between $(X,f)$ and $(\mathcal{M}(X),\hat{f})$
has been developed rapidly, and we refer the readers to \cite{Bernardes,Glasner,Komuro,Li13,Li17,Wu}
and references therein. It is worthy to mention that
Glasner and Weiss  \cite{Glasner} proved that for a minimal dynamical system $(X, f)$,  it has  zero topological entropy
if and only if $(\mathcal{M}(X),\hat{f})$ has  zero topological entropy, which demonstrates a big difference of dynamics between $(\mathcal{K}(X),\bar{f})$ and $(\mathcal{M}(X),\hat{f})$. Here, $\bar{f}$ is the induced map on the hyperspace $\mathcal{K}(X)$, and the readers are referred to \cite{FG16,Guirao,Liu,Li13,Peris,Roman03} for the dynamics of $(\mathcal{K}(X),\bar{f})$. For the generic  homeomorphism $f$ on the Cantor space $\{0,1\}^{\mathbf{N}}$,
 Bernardes and Vermersch  proved that $(\mathcal{M}(\{0,1\}^{\mathbf{N}}),\hat f)$ has no Li-Yorke pair in \cite{Bernardes}.
  Li {\it et al.}  \cite{Li17} showed that  multi-$\mathcal{F}$-sensitivity
of $(X, f)$ is equivalent to that of $(\mathcal{M}(X),\hat{f})$. They also proved that  Li-Yorke sensitivity of $(\mathcal{M}(X),\hat{f})$ implies that of $(X,f)$, but the converse is not true in general. Li {\it et al.}  showed that $(\mathcal{M}(X),\hat{f})$
is a $P$-system if and only if $(X,f)$ is a weakly mixing almost-HY-system in \cite{Li13}; and  Wu  \cite{Wu} proved that the exactness of $(X,f)$ is equivalent to that of
$(\mathcal{M}(X),\hat{f})$.



In this paper, we obtain some  new results on the interrelations of    dynamics between a dynamical system and its induced system on the space of probability measures.
{\bf Firstly},   our  results for
 the interval  dynamical  system  $(I,f)$ are provided in (1)--(4).

(1) A sharp condition is given to ensure the  transitivity of $(I,f)$ be  equivalent to that of  $(\mathcal{M}(I),\hat{f})$, see Theorem \ref{sharp condition transitivity on interval}.


(2) 
 $(I,f)$ is totally transitive if and only if $(\mathcal{M}(I), \hat f)$ is totally transitive, see Theorem \ref{total transitivity}.

(3) $(\mathcal{M}(I),\hat{f})$ has  infinite topological entropy for any transitive system $(I,f)$. However,
there exists a  transitive non-autonomous system $(I,f_{0,\infty})$ such that $(\mathcal{M}(I),\hat{f}_{0,\infty})$ has zero topological entropy,
see Theorem \ref{transitivity-entropy}.

(4) $(I,f)$ is sensitive if and only if $(\mathcal{M}(I),\hat{f})$ is sensitive, see Theorem \ref{sensitive equivalent interval}.

{\bf Secondly,} our results  for the autonomous  dynamical  system $(X, f)$ are the following (5)--(6).

(5) Chain transitivity of $(X, f)$ implies chain mixing of $(\mathcal{M}(X),\hat{f})$, see Corollary \ref{adschain}. Two counterexamples are given to show that the converse is not true in general, see Examples \ref{surjective} and \ref{homeomorphism}.

(6) There exists $(X, f)$ such that it has  shadowing  but $(\mathcal{M}(X),\hat{f})$ has no shadowing, see Theorem \ref{discrete-example}.

(7) A simple  $(X,f)$  is given  such that $(X,f)$ and $(\mathcal{M}(X),\hat f)$  have no Li-Yorke pair simultaneously, but 
$(\mathcal{K}(X),\bar f)$ is  distributionally chaotic, see Example \ref{one-point compactification of integers no Li-Yorke}.

It is noted that
 the majority of complex  systems in the fields of biology, physics and engineering are driven by  a sequence of different  functions, and thus
the study on non-autonomous dynamical systems is of significant importance in applications (see \cite{BO12,Kolyada96,ShaoChaos,Shi09} and references therein).
{\bf Finally,} our results for the  non-autonomous  dynamical  system $(X, f_{0,\infty})$ are given in (7)--(10).

(8) There exists $(X, f_{0,\infty})$ such that it is  weak mixing of order $2$
 but $(\mathcal{M}(X),\hat{f}_{0,\infty})$ is not, while weak mixing of order $n$ of $(\mathcal{M}(X),\hat{f}_{0,\infty})$ implies  that of $(X,f_{0,\infty})$ for any $n\geq2$, see Theorem \ref{weak mixing order 2} and Proposition \ref{weak mixing some order all orders}.

(9) If $f_n$ is surjective for all $n\geq0$, then $(\mathcal{M}(X),\hat{f}_{0,\infty})$ is always chain mixing, and  shadowing of $(\mathcal{M}(X), \hat f_{0,\infty})$ ensures  mixing of  $(X,  f_{0,\infty})$, see Theorem \ref{chain mixing}  and Corollary \ref{all}.

(10) $(X, f_{0,\infty})$ is  mixing (resp., mild mixing,  exact) if and only if $(\mathcal{M}(X),$
$\hat{f}_{0,\infty})$ is so, see Theorems \ref{mixing}--\ref{exactness}.

(11) $(X, f_{0,\infty})$ is  cofinitely sensitive (resp., multi-sensitive) if and only if $(\mathcal{M}(X),$
$\hat{f}_{0,\infty})$ is so, see Theorems \ref{cofinite}--\ref{multi sensitive}.


(12) 
 $(X, f_{0,\infty})$ is  equi-conjugate to $(Y, g_{0,\infty})$ if and only if $(\mathcal{M}(X),\hat{f}_{0,\infty})$ is  equi-conjugate to $(\mathcal{M}(Y),$ $\hat{g}_{0,\infty})$, see Theorem \ref{equi conjugacy}.

Here, we compare the existing results in the literatures with  our new results in this paper.
Bauer and Sigmund  \cite{Bauer} proved that transitivity of $(\mathcal{M}(X),\hat{f})$ implies that of $(X, f)$, but the converse is not true in general. In Theorem \ref{sharp condition transitivity on interval}, we obtain  a sharp criterion for the converse to be valid on an interval.
 They also proved that weak mixing of order $2$ is equivalent between  $(X,f)$ and $(\mathcal{M}(X),\hat{f})$.  In  Theorem \ref{weak mixing order 2} and Proposition \ref{weak mixing some order all orders}, we show that  weak mixing of order $2$ of $(X,f_{0,\infty})$ can not carry over to
 $(\mathcal{M}(X),\hat{f}_{0,\infty})$, while weak mixing of order $n$ of $(\mathcal{M}(X),\hat{f}_{0,\infty})$ is inherited by  $(X,f_{0,\infty})$  for $n\geq2$.
 Fernandez {\it et al.}  \cite{FGPR15} gave a counterexample to show that  chain transitivity of $(X, f)$ does not imply that of $(\mathcal{K}(X),\bar{f})$ in general. This is different for $(\mathcal{M}(X),\hat{f})$, for which we show in Corollary \ref{adschain} that if $(X, f)$ is chain transitive, then so is $(\mathcal{M}(X),\hat{f})$. Li {\it et al.} \cite{Li17} showed that the sensitivity of $(X,f)$ is not inherited by  $(\mathcal{M}(X),\hat{f})$ in general. On an interval, we prove in Theorem \ref{sensitive equivalent interval} that  $(I,f)$ is sensitive if and only if  $(\mathcal{M}(I),\hat{f})$ is sensitive.
Bernardes and Vermersch \cite{Bernardes} showed that if $f$ is  homeomorphic, then $(\mathcal{M}(X),\hat{f})$ is chain mixing, and
 weak shadowing of  $(\mathcal{M}(X), \hat f)$ implies transitivity of $(X, f)$. We furthermore prove  in Theorem \ref{chain mixing} and Theorem \ref{mixingshadowing}  that if $f_n$ is surjective for $n\geq0$, then  $(\mathcal{M}(X),\hat{f}_{0,\infty})$ is chain mixing, and shadowing of  $(\mathcal{M}(X), \hat f_{0,\infty})$ implies  mixing of $(X, f_{0,\infty})$.
Guirao {\it et al.}  provided an example $(X,f)$ which has no Li-Yorke pair such that $(\mathcal{K}(X),\bar{f})$ is Li-Yorke chaotic in \cite{Guirao}. For this $(X, f)$, in contrast to $(\mathcal{K}(X),\bar{f})$, we prove that  $(\mathcal{M}(X),\hat{f})$ has no Li-Yorke pair in Example \ref{one-point compactification of integers no Li-Yorke}.
Fern\'andez and  Good  showed that shadowing of $(X, f)$ and $(\mathcal{K}(X),\bar{f})$ are equivalent in \cite{FG16}. However,  we
obtain in Theorem \ref{discrete-example} that shadowing of $(X, f)$ does not imply  that of $(\mathcal{M}(X),\hat{f})$ in general.

The rest of the paper is organized as follows. Sections 2-5 investigates  the interrelations of different  dynamics  between a dynamical system and its induced system on the space of probability measures.  In Section 2, transitivity, mixing and exactness  are considered.   Chain mixing, chain transitivity, shadowing and specification
 are studied in Section 3.
Section 4 deals with various kinds of sensitivity. In Section 5,   topological sequence  entropy,  conjugacy,  Li-Yorke chaos and distributional chaos
are investigated.

\section{Transitivity and mixing}
In this section,  the interrelations of  transitivity and mixing between
$(X,f_{0,\infty})$ and $(\mathcal{M}(X),$ $\hat f_{0,\infty})$ are studied.

Let $\mathbf{N}$ and $\mathbf{Z^{+}}$ denote the set of all nonnegative integers and the set of all positive
integers, respectively, and let $B_{d}(x,\varepsilon)$ and $\bar{B}_{d}(x,\varepsilon)$ denote the open and closed
balls of radius $\varepsilon>0$ centered at $x\in X$, respectively.

First, recall some concepts and properties. $(X,f_{0,\infty})$ is (topologically) transitive  if $N(U,V)\neq\emptyset$ for any two nonempty open subsets $U$
and $V$ of $X$, where $N(U,V):=\{n\geq 1:f_{0}^{n}(U)\cap V\neq\emptyset\}$; it is mild mixing if
$(X\times Y, f_{0,\infty}\times g_{0,\infty})$ is transitive for any transitive
system $(Y, g_{0,\infty})$; it is (topologically) mixing if there exists $N_0\in \mathbf{Z^{+}}$ such that
$N(U,V)\supset[N_0,+\infty)\cap\mathbf{Z^{+}}$ for any two nonempty open subsets $U$
and $V$ of $X$; it is (topologically) exact if there exists $N_1\in \mathbf{Z^{+}}$
such that $f^{n}_{0}(U)=X$  for any  nonempty open subset $U$ of $X$ and for all $n\geq N_1$; it is weakly mixing of order $n$ if $\bigcap_{i=1}^{n}N(U_i,V_i)
\neq\emptyset$ for any nonempty open subsets $U_1,\cdots,U_n$ and $V_1,\cdots,V_n$; and it is weakly mixing of all orders
if $(X,f_{0,\infty})$ is weakly mixing of order $n$ for every $n\geq2$.
It is evident that exactness\;$\Rightarrow$\;mixing\;$\Rightarrow$\;mild mixing\;$\Rightarrow$\;weakly mixing of all orders\;$\Rightarrow$\;weakly mixing of order $n$ ($n\geq2$)\;$\Rightarrow$\;transitivity for $(X,f_{0,\infty})$.

Then some preliminaries on the probability measures are given.
Let $x\in X$ and $\delta_{x}\in\mathcal{M}(X)$ be the Dirac point measure of $x$ defined by
\begin{align*}
		\delta_{x}(A)=\left\{\begin{array}{ll}
		1,& x\in A,\\
		0, & x\notin A,\\
\end{array}\right.
\end{align*}
for $A\in \mathcal{B}(X)$. Note that $\hat{f}_{n}(\delta_{x})=\delta_{f_{n}(x)}$ for $x,y\in X$ and $n\geq0$.
Some basic results summarized below can be found in, e.g. \cite{Bauer,Bernardes,Komuro}.

\begin{lemma}\label{basic result}
{\rm(i)} $\mathcal{P}_{d}(\delta_x,\delta_y)=\min\{d(x,y),1\}$ for
$x,y\in X$.

{\rm(ii)}  Let $\mathcal{M}_{n}(X)\triangleq\left\{\frac{1}{n}\sum_{i=1}^{n}\delta_{x_i}: x_1,\cdots,x_n\in X\right\}$ for
$n\geq1$. Then $\mathcal{M}_{n}(X)$ is closed in $\mathcal{M}(X)$ for any $n\geq1$, and $\mathcal{M}_{\infty}(X)\triangleq\cup_{n=1}^{\infty}\mathcal{M}_{n}(X)$ is dense in $\mathcal{M}(X)$.

{\rm(iii)} Let $\mu_i\in\mathcal{M}(X)$, $i=1,\cdots, n$, $f: X\to X$, $\alpha_i\in\mathbb{R}$,  and $\sum_{i=1}^n \alpha_i=1$. Then
$\hat f(\sum_{i=1}^n\alpha_i\mu_i)=\sum_{i=1}^n \alpha_i\hat f(\mu_i).$

{\rm(iv)} Let $\mu,\nu\in\mathcal{M}(X)$ and $0\leq \alpha\leq \beta\leq 1$. Then $\mathcal{P}_d(\alpha\mu+(1-\alpha)\nu,\beta\mu+(1-\beta)\nu)\leq\beta-\alpha.$
\end{lemma}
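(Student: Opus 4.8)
All four assertions are routine computations with the Prohorov metric; I would treat them in turn, systematically invoking the one-sided formula (\ref{measure-metric-def}) so that only one inequality needs checking each time.

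For (i), split into the cases $d(x,y)\ge1$ and $d(x,y)<1$. The bound $\mathcal{P}_d(\delta_x,\delta_y)\le1$ is automatic, since $\epsilon=1$ always satisfies the condition in (\ref{measure-metric-def}) (the right-hand side is then at least $1\ge\delta_x(A)$). If $d(x,y)<1$, then for every $\epsilon\in(d(x,y),1]$ and every $A\in\mathcal{B}(X)$ the inequality $\delta_x(A)\le\delta_y(A^\epsilon)+\epsilon$ is trivial when $x\notin A$, while if $x\in A$ then $d(y,A)\le d(x,y)<\epsilon$, so $y\in A^\epsilon$ and $\delta_y(A^\epsilon)=1\ge\delta_x(A)$; hence $\mathcal{P}_d(\delta_x,\delta_y)\le d(x,y)$. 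For the reverse bound, testing $A=\{x\}$ shows any admissible $\epsilon$ satisfies $1\le\delta_y\big(B_d(x,\epsilon)\big)+\epsilon$, which is impossible unless $\epsilon\ge1$ or $\epsilon>d(x,y)$; thus $\mathcal{P}_d(\delta_x,\delta_y)\ge\min\{d(x,y),1\}$.

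For (ii), closedness comes from a compactness argument: the map $\Phi_n\colon X^n\to\mathcal{M}(X)$, $(x_1,\dots,x_n)\mapsto\frac1n\sum_{i=1}^n\delta_{x_i}$, is weak$^*$-continuous, because for each continuous $g\colon X\to\mathbb{R}$ the integral $\int g\,d\big(\tfrac1n\sum_i\delta_{x_i}\big)=\tfrac1n\sum_i g(x_i)$ depends continuously on $(x_1,\dots,x_n)$; hence $\mathcal{M}_n(X)=\Phi_n(X^n)$ is a compact, thus closed, subset of $\mathcal{M}(X)$. For density of $\mathcal{M}_\infty(X)$: given $\mu$ and $\epsilon>0$, disjointify a finite cover of $X$ by balls of radius $<\epsilon/2$ into a Borel partition $E_1,\dots,E_m$ (discarding empty pieces) with $\mathrm{diam}(E_j)<\epsilon$, pick $x_j\in E_j$, and set $\nu=\sum_j\mu(E_j)\delta_{x_j}$. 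Since $E_j\cap A\neq\emptyset$ implies $E_j\subseteq A^\epsilon$, one has $\mu(A)\le\sum_j\mu(E_j)\delta_{x_j}(A^\epsilon)=\nu(A^\epsilon)$ for all $A\in\mathcal{B}(X)$, so $\mathcal{P}_d(\mu,\nu)\le\epsilon$ by (\ref{measure-metric-def}). Finally, for $n$ large choose $k_j\in\mathbf{N}$ with $\sum_j k_j=n$ and $|k_j/n-\mu(E_j)|<\epsilon/m$; then $\nu_n=\frac1n\sum_j k_j\delta_{x_j}\in\mathcal{M}_n(X)$ and the same estimate gives $\mathcal{P}_d(\nu,\nu_n)\le\epsilon$, so $\mathcal{M}_\infty(X)$ is dense by the triangle inequality.

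Item (iii) is immediate from the defining relation $\hat f(\rho)(A)=\rho(f^{-1}(A))$ and the additivity of $\rho\mapsto\rho(f^{-1}(A))$. For (iv), put $\eta_1=\alpha\mu+(1-\alpha)\nu$ and $\eta_2=\beta\mu+(1-\beta)\nu$; for any $\epsilon>\beta-\alpha$ and $A\in\mathcal{B}(X)$, monotonicity gives $\eta_2(A^\epsilon)\ge\beta\mu(A)+(1-\beta)\nu(A)$, so $\eta_1(A)-\eta_2(A^\epsilon)\le(\beta-\alpha)\big(\nu(A)-\mu(A)\big)\le\beta-\alpha<\epsilon$, and (\ref{measure-metric-def}) yields $\mathcal{P}_d(\eta_1,\eta_2)\le\epsilon$; letting $\epsilon\downarrow\beta-\alpha$ finishes. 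The only mildly delicate point is the density argument in (ii), with its two-stage approximation — first by an arbitrary finitely supported measure, then by one with equal rational weights; everything else is a direct unravelling of the definitions, so I do not expect a genuine obstacle.
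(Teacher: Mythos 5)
Your proof is correct on all four points. Note that the paper itself does not prove this lemma --- it simply cites \cite{Bauer,Bernardes,Komuro} for these facts --- so there is no in-paper argument to compare against; your reasoning (the one-sided Strassen formula (\ref{measure-metric-def}) for (i) and (iv), compactness of the continuous image $\varphi_n(X^n)$ for closedness, and the two-stage approximation by a finitely supported measure and then by one with equal weights for density) is exactly the standard route taken in those references, and each step checks out.
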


Let $X^{n}\triangleq\underbrace{X\times\cdots\times X}_{n}$, and
define a map $\varphi_n: X^{n}\to \mathcal{M}_{n}(X)$ by
\begin{align}\label{2.3}
\varphi_n(x)=\frac{1}{n}\sum_{i=1}^{n}\delta_{x_i},\;x=(x_1,\cdots,x_n)\in X^{n},\;n\geq1.
\end{align}
Define the metric $d_n$ in $X^{n}$ by $d_n(x,y)=\max_{1\leq i\leq n}d(x_i,y_i)$
for $x=(x_1,\cdots,x_n)$ and $y=(y_1,\cdots,y_n)\in X^{n}$.
It is easy to verify that $\varphi_n$ is continuous in $X^{n}$ for $n\geq1$.


\subsection{Transitivity}
First, a preliminary result is established.

\begin{proposition}\label{transitive}
If $(\mathcal{M}(X),\hat f_{0,\infty})$ is transitive, then so is $(X,f_{0,\infty})$.
\end{proposition}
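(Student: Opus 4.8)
The plan is to transport transitivity down from $\mathcal{M}(X)$ to $X$ through the canonical embedding $x\mapsto\delta_x$, which intertwines the two dynamics since $\hat f_0^n(\delta_x)=\delta_{f_0^n(x)}$. The obstacle to a naive pullback is that the set of Dirac measures is closed, not open, in $\mathcal{M}(X)$; so instead of feeding transitivity of $(\mathcal{M}(X),\hat f_{0,\infty})$ a point, I would feed it a small Prohorov ball centred at a Dirac measure, and then recover an honest point of $X$ from the resulting measure estimate.

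Concretely, given nonempty open sets $U,V\subseteq X$, I would fix $x_0\in U$, $y_0\in V$ and radii $r,s>0$ with $B_d(x_0,r)\subseteq U$ and $B_d(y_0,s)\subseteq V$, and choose $\epsilon$ with $0<\epsilon<\min\{r,s,1/2\}$. Applying transitivity of $(\mathcal{M}(X),\hat f_{0,\infty})$ to the nonempty open sets $B_{\mathcal{P}_d}(\delta_{x_0},\epsilon)$ and $B_{\mathcal{P}_d}(\delta_{y_0},\epsilon)$ produces some $n\geq1$ and a measure $\mu\in\mathcal{M}(X)$ with $\mathcal{P}_d(\mu,\delta_{x_0})<\epsilon$ and $\mathcal{P}_d(\hat f_0^n(\mu),\delta_{y_0})<\epsilon$.

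The key step is to convert these two inequalities into a genuine point of $X$. Testing the Prohorov estimate on the singleton $A=\{x_0\}$ gives $\mu(B_d(x_0,\epsilon))>1-\epsilon$, and likewise $\hat f_0^n(\mu)(B_d(y_0,\epsilon))>1-\epsilon$; by (\ref{1.1}) the latter quantity equals $\mu\big(f_0^{-n}(B_d(y_0,\epsilon))\big)$. Since $1-\epsilon>1/2$, the two Borel sets $B_d(x_0,\epsilon)$ and $f_0^{-n}(B_d(y_0,\epsilon))$ have $\mu$-measures summing to more than $1$, hence they intersect; any $x$ in their intersection satisfies $x\in B_d(x_0,\epsilon)\subseteq U$ and $f_0^n(x)\in B_d(y_0,\epsilon)\subseteq V$, so $n\in N(U,V)$ and $(X,f_{0,\infty})$ is transitive.

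I do not anticipate a serious obstacle: the argument is short, and the only points requiring care are the bookkeeping of the three constraints forcing $\epsilon$ small (so the two balls sit inside $U$ and $V$, and so that $1-\epsilon>1/2$ legitimizes the pigeonhole step) and the correct handling of the $\epsilon$-neighbourhood notation together with the identity $\hat f_0^n(\mu)(A)=\mu(f_0^{-n}(A))$ from (\ref{1.1}). Conceptually, the proof rests only on the elementary fact that a measure $\mathcal{P}_d$-close to $\delta_{x_0}$ must concentrate almost all of its mass near $x_0$, which is precisely what converts the measure-level recurrence supplied by transitivity of $\hat f_{0,\infty}$ into genuine point-level recurrence in $X$.
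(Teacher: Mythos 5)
Your proof is correct and follows essentially the same strategy as the paper's: apply transitivity of $(\mathcal{M}(X),\hat f_{0,\infty})$ to suitable open sets of measures concentrated on $U$ and on $V$, then use the pigeonhole observation that two Borel sets each of $\mu$-measure exceeding $1/2$ must intersect. The only cosmetic difference is that the paper feeds transitivity the open sets $\{\mu:\mu(U)>1/2\}$ and $\{\nu:\nu(V)>1/2\}$ directly, whereas you use Prohorov balls around Dirac measures and extract the mass-concentration estimate from the metric; both routes are equally valid.
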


\begin{proof}
 Fix any two nonempty open subsets $U$ and $V$ in $X$. Denote
\begin{align*}
\mathcal{U}:=\{\mu\in\mathcal{M}(X): \mu(U)>1/2\},\;\mathcal{V}:=\{\nu\in\mathcal{M}(X): \nu(V)>1/2\}.
\end{align*}
It is easy to verify that $\mathcal{U}$ and $\mathcal{V}$ are nonempty open subsets of $\mathcal{M}(X)$.
Since $(\mathcal{M}(X),\hat f_{0,\infty})$ is transitive, there exist $N_0\geq 1$ and $\mu\in\mathcal{U}$ such that
 $\hat{f}_{0}^{N_0}(\mu)\in\mathcal{V}$.
Thus, $\mu(U)>1/2$ and $\hat{f}_{0}^{N_0}(\mu)(V)=\mu\big(f_{0}^{-N_0}(V)\big)>1/2$,
which gives
$f_{0}^{N_0}(U)\cap V\neq\emptyset.$
Hence, $(X,f_{0,\infty})$ is transitive.
\end{proof}

In \cite{Bauer}, it was shown that the irrational rotation mapping $R: \mathbb{S}^1\to \mathbb{S}^1$ is
transitive but $\hat R: \mathcal{M}(\mathbb{S}^1)\to \mathcal{M}(\mathbb{S}^1)$ is not, where $\mathbb{S}^1$ is the unit circle.
Thus, the converse of Proposition \ref{transitive} may not be true even for a single map. Nevertheless, we get
a sharp condition to guarantee the validness of the converse  for an interval map $f$.

\begin{theorem}\label{sharp condition transitivity on interval}
{\rm(i)}
Assume that $(I,f)$ has a periodic point of odd period different from $1$. Then $(I,f)$ is transitive if and only if $(\mathcal{M}(I),\hat f)$ is transitive.

{\rm(ii)} There exists  $(I,f)$ such that

\qquad{\rm($1$)} it has no periodic points of odd period different from $1$;

\qquad{\rm($2$)} it is transitive but  $(\mathcal{M}(I),\hat f)$ is not transitive.
\end{theorem}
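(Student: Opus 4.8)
The plan is to reduce the theorem to two classical inputs: the structure theorem for transitive interval maps, and the Bauer--Sigmund result \cite{Bauer} that weak mixing of order $2$ is equivalent for $(X,f)$ and $(\mathcal{M}(X),\hat f)$. Recall the structure theorem: if $(I,f)$ is transitive, then either $f$ is topologically mixing (which for interval maps is equivalent to being totally transitive, and to being weakly mixing), or there is an interior fixed point $c$ splitting $I=J_1\cup J_2$ into nondegenerate closed subintervals with $J_1\cap J_2=\{c\}$, $f(J_1)=J_2$, $f(J_2)=J_1$, and $f^2|_{J_1},f^2|_{J_2}$ topologically mixing; I will call $f$ of \emph{swapping type} in the second case. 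The first thing I would record is that a swapping-type map has no periodic point of odd period other than $1$: if $x\neq c$ is periodic then its orbit avoids $c$ (otherwise it would be trapped at $c$ and could not return to $x$), so, taking $x\in J_1$ say, $f^j(x)\in J_1\setminus\{c\}$ for even $j$ and $f^j(x)\in J_2\setminus\{c\}$ for odd $j$, which forces the period to be even; and $c$ is the only periodic point of odd period, of period $1$.

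\textbf{Part (i).} The implication ``$(\mathcal{M}(I),\hat f)$ transitive $\Rightarrow$ $(I,f)$ transitive'' is Proposition \ref{transitive} and needs no hypothesis, so if $(I,f)$ is not transitive the asserted equivalence holds vacuously. It thus suffices to prove: if $(I,f)$ is transitive and has a periodic point of odd period $\neq1$, then $(\mathcal{M}(I),\hat f)$ is transitive. By the observation above such an $f$ is not of swapping type, so by the structure theorem it is mixing, hence weakly mixing of order $2$; then by \cite{Bauer} the induced system $(\mathcal{M}(I),\hat f)$ is weakly mixing of order $2$, and in particular transitive.

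\textbf{Part (ii).} Here I would simply take $f$ to be a transitive interval map of swapping type; such maps exist, for instance the piecewise-linear map of $[0,1]$ with $c=\tfrac12$ given by $f(x)=1-2\,|x-\tfrac14|$ on $[0,\tfrac12]$ and $f(x)=2\,|x-\tfrac34|$ on $[\tfrac12,1]$, whose square is a full four-branch piecewise-linear, hence mixing, map on each half. Condition (1) is then exactly the observation above, and $(I,f)$ is transitive by the choice of $f$, so it remains to see that $(\mathcal{M}(I),\hat f)$ is not transitive. For this I would use the two open sets
$$\mathcal{U}=\{\mu\in\mathcal{M}(I):\mu(J_1^\circ)>3/4\},\qquad \mathcal{V}=\{\nu\in\mathcal{M}(I):\nu(J_1^\circ)>1/4\text{ and }\nu(J_2^\circ)>1/4\},$$
where $J_i^\circ$ is the interior of $J_i$ in $I$ (so $J_1^\circ\cap J_2=J_1\cap J_2^\circ=\emptyset$); these are nonempty (the first contains $\delta_x$ for $x\in J_1^\circ$, the second $\tfrac12\delta_{x_1}+\tfrac12\delta_{x_2}$ for $x_i\in J_i^\circ$) and open, since $\mu\mapsto\mu(G)$ is lower semicontinuous for open $G$. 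The point is that $f(J_1)\subseteq J_2$ and $f(J_2)\subseteq J_1$ give $\hat f(\mu)(J_2)=\mu(f^{-1}(J_2))\ge\mu(J_1)$ and $\hat f(\mu)(J_1)\ge\mu(J_2)$; iterating, every $\mu\in\mathcal{U}$ satisfies $\hat f^{2k}(\mu)(J_1)>3/4$ and $\hat f^{2k+1}(\mu)(J_2)>3/4$ for all $k\ge0$, and in each case $\hat f^{n}(\mu)$ assigns mass $<1/4$ to one of $J_1^\circ,J_2^\circ$, hence $\hat f^n(\mu)\notin\mathcal{V}$. Therefore $N(\mathcal{U},\mathcal{V})=\emptyset$ and $(\mathcal{M}(I),\hat f)$ is not transitive.

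I expect the work to lie mostly in quoting the structural input correctly rather than in any genuine difficulty: one needs the dichotomy for transitive interval maps, the interval-map coincidence of topological mixing, total transitivity and weak mixing, and the Bauer--Sigmund equivalence in the stated form. The only places that require a little care are verifying that the explicit map in Part (ii) is genuinely transitive (equivalently, that its square is mixing on each half) and dealing with the shared endpoint $c$ in the measure estimates; but since $c$ is a single point and those estimates only compare the mass of an open half with that of the complementary closed half, neither causes real trouble.
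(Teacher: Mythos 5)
Your proposal is correct, and the two parts relate to the paper's proof differently. For part (i) you take essentially the same route as the paper: the unconditional direction is Proposition \ref{transitive}, and the hypothesis on odd periods is used to upgrade transitivity of $(I,f)$ to mixing via the structure theorem for transitive interval maps (the paper quotes this directly as Theorem 2.20 of \cite{R17}; you derive it from the mixing/swapping dichotomy after checking that a swapping-type map has no odd periodic points other than the fixed point $c$ --- a correct and worthwhile observation, since it is also exactly what gives condition (1) in part (ii)). The only difference is that the paper then invokes its own Theorem \ref{mixing} to transfer mixing to $(\mathcal{M}(I),\hat f)$, whereas you invoke the Bauer--Sigmund equivalence of weak mixing of order $2$ from \cite{Bauer}; both suffice. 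For part (ii) your argument is genuinely different and, in my view, cleaner. The paper works with the specific map on $[-1,1]$ swapping $[-1,0]$ and $[0,1]$ and shows by a direct Prohorov-metric estimate that a small ball around $\delta_{-1/2}$ never meets a small ball around $\tfrac12(\delta_{-1/2}+\delta_{1/2})$; this requires computing $f^{-n}$ of the two halves and bounding $\nu(K)$ for $K$ in the ``wrong'' half. You instead use the forward inclusions $f(J_1)\subseteq J_2$, $f(J_2)\subseteq J_1$ to get $\hat f(\mu)(J_2)\ge\mu(J_1)$, so that mass concentrated on one half stays concentrated on alternating halves forever, and then separate $\mathcal{U}=\{\mu(J_1^\circ)>3/4\}$ from $\mathcal{V}=\{\nu(J_1^\circ)>1/4,\ \nu(J_2^\circ)>1/4\}$ using lower semicontinuity of $\mu\mapsto\mu(G)$ for open $G$. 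This avoids all preimage computations and metric estimates, works uniformly for every transitive swapping-type map rather than one hand-picked example, and sidesteps the boundary point $c$ painlessly. The only items you still owe are routine: that your explicit tent-like map really is of swapping type with $f^2$ mixing on each half (hence $(I,f)$ transitive), which follows from the full four-branch structure of $f^2|_{J_i}$ as you indicate.
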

\begin{proof}
(i)
By Proposition \ref{transitive}, the transitivity of $(\mathcal{M}(I),\hat f)$ implies that of $(I,f)$. Conversely, transitivity of $(I,f)$ is equivalent to  mixing of $(I,f)$ by Theorem 2.20 in \cite{R17}.  Theorem \ref{mixing} below shows that  mixing of $(I,f)$ is equivalent to that of $(\mathcal{M}(I),\hat f)$. This means that $(\mathcal{M}(I),\hat f)$ is transitive.

 (ii)
Let $I=[-1,1]$ and
\begin{align*}
		f(x)=\left\{\begin{array}{ll}
		2x+2,& x\in [-1,-{1\over2}],\\
		-2x, & x\in [-{1\over2},0],\\
        -x, & x\in [0,1],\\
		\end{array}\right.
	\end{align*}
which is illustrated in Figure 1. Then $f([-1,0])=[0,1]$ and $f([0,1])=[-1,0]$.
By Theorem 2.20 and Example 2.21 in \cite{R17}, $(I,f)$ is transitive and has no periodic points of odd period different from $1$.
 \begin{center}
 \begin{tikzpicture}[scale=0.7]
 \draw [->](-3, 0)--(3, 0)node[right]{${ x}$};
 \draw [->](0,-3)--(0,3) node[above]{${ f}$};
   \draw (-2, 0).. controls (-1, 2) and (-1, 2)..(-1, 2);
    \draw (-1, 2).. controls (0, 0) and (0, 0)..(0,0);
   \draw (0, 0).. controls (2, -2) and (2, -2)..(2, -2);
    \path (-1, 0)  edge [-,dotted](-1, 2) [line width=0.4pt];
    \path (-1, 2)  edge [-,dotted](0,2) [line width=0.4pt];
    \path (0, -2)  edge [-,dotted](2, -2) [line width=0.4pt];
    \path (2, 0)  edge [-,dotted](2,-2) [line width=0.4pt];
    \node (a) at (2,0.3) {\small$1$};
    \node (a) at (-0.5,-2) {\small$-1$};
     \node (a) at (-1,-0.4) {\small$-{1\over2}$};
      \node (a) at (0.2,2) {\small$1$};
       \node (a) at (-2.1,-0.3) {\small$-1$};
 \end{tikzpicture}
\end{center}
 \begin{center}\vspace{-0.2cm}
   \small { Figure 1. The piecewise-linear map}
  \end{center}

Then it will be shown that $(\mathcal{M}(I),\hat{f})$ is not transitive.
Let $\mu_1=\delta_{-{1\over2}}$, $\mu_2={1\over2}(\delta_{-{1\over2}}+\delta_{1\over2})$ and $\varepsilon_0\in(0,{1\over4})$.
First, we claim that for any $\nu\in B_{\mathcal{P}_d}(\mu_1,\varepsilon_0)$ and for any Borel subset $K\subset [0,1]$, we have
\begin{align}\label{nukleqv0}
\nu(K)\leq \varepsilon_0.
\end{align}
 Suppose that this is not true. Then  there exist $\nu_0\in B_{\mathcal{P}_d}(\mu_1,\varepsilon_0)$ and a Borel subset $K_0\subset [0,1]$ such that $\nu_0(K_0)>\varepsilon_0$.
Since $\mu_1(K_0^{\varepsilon_0})=\delta_{-{1\over 2}}(K_0^{\varepsilon_0})=0$, we have $\nu_0(K_0)>\mu_1(K_0^{\varepsilon_0})+\varepsilon_0$, which yields
$\mathcal{P}_d(\nu_0,\mu_1)>\varepsilon_0$. However, this contradicts the fact that $\nu_0\in B_{\mathcal{P}_d}(\mu_1,\varepsilon_0)$.


Let $\varepsilon\in(0,{1\over2}-\varepsilon_0)$. Now, it can be shown that $\hat f^{n}(B_{\mathcal{P}_d}(\mu_1,\varepsilon_0))\cap  B_{\mathcal{P}_d}(\mu_2,\varepsilon)=\emptyset$ for all  $n\geq0$. Let $\nu\in B_{\mathcal{P}_d}(\mu_1,\varepsilon_0)$.
First, consider $n$ to be even. Take $A=\{{1\over2}\}$. Since $A^{\varepsilon}\subset [0,1]$ and $f^{-n}([0,1])=[0,1]$ for even $n$, it follows from (\ref{nukleqv0}) that  $\nu(f^{-n}(A^{\varepsilon}))\leq \varepsilon_0$. Thus, $\mu_2(A)={1\over 2}> \nu(f^{-n}(A^{\varepsilon}))+\varepsilon=\hat f^n(\nu)(A^{\varepsilon})+\varepsilon$, which gives
$\mathcal{P}_d(\hat f^{n}(\nu),\mu_2)>\varepsilon$ for even $n$. Next, consider $n$ to be odd. Set $B=\{-{1\over2}\}$.
Note that $B^{\varepsilon}\subset [-1,0]$ and $f^{-n}([-1,0])=[0,1]$ for odd $n$. Thus, $\nu(f^{-n}(B^{\varepsilon}))\leq \varepsilon_0$.
So, $\mu_2(B)={1\over 2}> \nu(f^{-n}(B^{\varepsilon}))+\varepsilon=\hat f^n(\nu)(B^{\varepsilon})+\varepsilon$, which implies
$\mathcal{P}_d(\hat f^{n}(\nu),\mu_2)>\varepsilon$ for odd $n$. This proves that $\hat f^{n}(B_{\mathcal{P}_d}(\mu_1,\varepsilon_0))\cap  B_{\mathcal{P}_d}(\mu_2,\varepsilon)=\emptyset$ for all  $n\geq0$.
\end{proof}

The following results can be obtained for a single map $f$ on $I$ unconditionally. Recall that
$(I,f)$ is  totally transitive if $(I,f^{n})$ is transitive for $n\geq1$.

\begin{theorem}\label{total transitivity}
{\rm(i)} $(I,f^2)$ is transitive if and only if $(\mathcal{M}(I), \hat f^2)$ is  transitive.

{\rm(ii)} $(I,f)$ is totally transitive if and only if $(\mathcal{M}(I), \hat f)$ is totally transitive.
\end{theorem}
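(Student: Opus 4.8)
The plan is to deduce everything from Theorem \ref{mixing} (in its autonomous form, applied to the iterates $f^{n}$), Proposition \ref{transitive}, and the classification of transitive interval maps. Throughout I use the elementary identity $\hat f^{\,n}=\widehat{f^{\,n}}$, i.e.\ $(\mathcal{M}(I),\hat f^{\,n})$ is precisely the induced system of $(I,f^{\,n})$, which follows at once from $\hat f^{\,n}(\mu)(A)=\mu(f^{-n}(A))$. The two ``easy'' implications are then immediate: if $(\mathcal{M}(I),\hat f^{2})=(\mathcal{M}(I),\widehat{f^{2}})$ is transitive, then Proposition \ref{transitive} forces $(I,f^{2})$ to be transitive; and if $(\mathcal{M}(I),\hat f)$ is totally transitive, then $(\mathcal{M}(I),\widehat{f^{n}})$ is transitive for every $n\geq1$, whence $(I,f^{n})$ is transitive for all $n$ by Proposition \ref{transitive}, i.e.\ $(I,f)$ is totally transitive.

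For the forward direction of (i), assume $(I,f^{2})$ is transitive. Then $(I,f)$ is transitive as well, since for nonempty open $U,V$ the set $N(U,V)$ for $f$ contains $\{2n: f^{2n}(U)\cap V\neq\emptyset\}$, which is nonempty. Now invoke the dichotomy for transitive interval maps (Theorem 2.20 in \cite{R17}): either $f$ is topologically mixing, or there is a fixed point $c$ in the interior of $I=[a,b]$ with $f([a,c])=[c,b]$ and $f([c,b])=[a,c]$. The second alternative cannot occur here, for it gives $f^{2}([a,c])\subseteq[a,c]$ and $f^{2}([c,b])\subseteq[c,b]$, so that choosing nonempty open $U\subseteq(a,c)$ and $V\subseteq(c,b)$ yields $f^{2n}(U)\cap V=\emptyset$ for all $n$, contradicting transitivity of $(I,f^{2})$. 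Hence $f$ is mixing, and therefore so is $f^{2}$ (every power of a topologically mixing map is mixing). By Theorem \ref{mixing}, $\widehat{f^{2}}=\hat f^{2}$ is mixing, in particular transitive, which proves (i).

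Part (ii) follows with no new ideas. For the remaining implication, suppose $(I,f)$ is totally transitive; in particular $(I,f^{2})$ is transitive, so the argument above shows that $f$ is mixing. Consequently $f^{n}$ is mixing for every $n\geq1$, and Theorem \ref{mixing} makes $\hat f^{n}=\widehat{f^{n}}$ mixing, hence transitive, for all $n\geq1$. Thus $(\mathcal{M}(I),\hat f)$ is totally transitive, and together with the easy direction recorded above this finishes the proof.

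The only delicate point I anticipate is citing the structure theorem for transitive interval maps in exactly the form needed: it is not enough to know that a non-mixing transitive interval map admits a $2$-cycle of subintervals; one needs that $f$ genuinely exchanges the two halves $[a,c]$ and $[c,b]$ across a fixed point $c$, so that $f^{2}$ leaves each half invariant. Once this structural input is borrowed from \cite{R17}, everything else is a formal manipulation using only Proposition \ref{transitive}, Theorem \ref{mixing}, and the identity $\hat f^{\,n}=\widehat{f^{\,n}}$.
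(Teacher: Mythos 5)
Your proposal is correct and follows essentially the same route as the paper: both reduce the statement to the equivalence (from Theorem 2.20 of \cite{R17}) between transitivity of $(I,f^2)$ (resp.\ total transitivity of $(I,f)$) and mixing of $(I,f)$, then transfer mixing to $(\mathcal{M}(I),\hat f)$ via Theorem \ref{mixing}, and handle the converse with Proposition \ref{transitive}. The only difference is presentational: you unpack the interval dichotomy and rule out the two-cycle-of-halves case by hand, whereas the paper cites the equivalence directly.
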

\begin{proof}
By Theorem 2.20 in \cite{R17},  transitivity of $(I,f^2)$ (resp., total transitivity of $(I,f)$) is equivalent
to  mixing of $(I,f)$. It follows from Theorem \ref{mixing} below that   mixing of $(I,f)$ is equivalent to that of
$(\mathcal{M}(I), \hat f)$, which yields transitivity of $(\mathcal{M}(I), \hat f^2)$ (resp., total transitivity
of $(\mathcal{M}(I), \hat f)$). Conversely, it follows from Proposition \ref{transitive} that transitivity of
$(\mathcal{M}(I), \hat f^2)$ (resp., total transitivity of $(\mathcal{M}(I), \hat f)$) ensures transitivity
of $(I,f^2)$ (resp., total transitivity of $(I,f)$).
\end{proof}

Dynamics of non-autonomous systems is much richer than that of autonomous systems in general. Finally, it will be  proved that $(\mathcal{M}(I), \hat f)$ has infinite topological entropy for any transitive autonomous system $(I,f)$, while there exists a transitive non-autonomous system  $(I,f_{0,\infty})$ such that $(\mathcal{M}(I), \hat f)$ has zero topological entropy. To proceed, we
 recall the definition of topological (sequence) entropy of $(X,f_{0,\infty})$ introduced in \cite{Kolyada96,Sotola}, which will be discussed in detail in Section 5. Suppose that $A=\{a_i\}_{i=1}^{\infty}\subset\mathbf{Z^{+}}$
is an increasing sequence, $n\geq1$ and $\epsilon>0$. A subset $E\subset X$ is  called $(n,\epsilon,A)$-separated if, for any $x\neq y\in E$,
there exists $0\leq j\leq n-1$ such that $d(f_{0}^{a_j}(x),f_{0}^{a_j}(y))>\epsilon$. Let $\Lambda\subset X$ and $s_{n}(\epsilon,A,f_{0,\infty},\Lambda)$
be the maximal cardinality of an $(n,\epsilon,A)$-separated set in $\Lambda$. Denote
\begin{align}\label{topological sequence entropy}
h_{A}(f_{0,\infty},\Lambda):=\lim_{\epsilon\to0}\limsup_{n\to\infty}\frac{1}{a_n}\log s_{n}(\epsilon,A,f_{0,\infty},\Lambda).
\end{align}
If $\Lambda=X$, then $h_A(f_{0,\infty}):=h_{A}(f_{0,\infty},X)$ is called the topological sequence entropy
of $(X,f_{0,\infty})$ with respect to the sequence $A$. Further, if $A=\mathbf{Z^{+}}$, then
$h(f_{0,\infty}):=h_{\mathbf{Z^{+}}}(f_{0,\infty})$ is called the topological entropy of $(X,f_{0,\infty})$.
If $f_n=f$ for all $n\geq0$, then $h(f_{0,\infty})$ is briefly denoted as $h(f)$.

\begin{lemma}\label{converge uniform}
Let $f_n:X\to X$ be a map for  $n\geq 0$. Then $f_n$ converges uniformly to $f$ on $X$ if and only if $\hat f_n$ converges uniformly to $\hat f$  on  $\mathcal{M}(X)$.
\end{lemma}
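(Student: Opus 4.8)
The plan is to prove the two implications independently. For ``$f_n\to f$ uniformly $\Longrightarrow$ $\hat f_n\to\hat f$ uniformly'' I would work directly from the one-sided (Strassen) formula (\ref{measure-metric-def}) for the Prohorov metric. Fix $\varepsilon\in(0,1)$ and take $N$ with $d(f_n(x),f(x))<\varepsilon$ for all $x\in X$ and all $n\ge N$. The key observation is the set inclusion $f_n^{-1}(A)\subseteq f^{-1}(A^{\varepsilon})$, valid for every $A\in\mathcal{B}(X)$ and every $n\ge N$: if $f_n(x)\in A$ then $d(f(x),A)\le d(f(x),f_n(x))<\varepsilon$, so $f(x)\in A^{\varepsilon}$. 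Evaluating an arbitrary $\mu\in\mathcal{M}(X)$ then gives $\hat f_n(\mu)(A)=\mu(f_n^{-1}(A))\le\mu(f^{-1}(A^{\varepsilon}))=\hat f(\mu)(A^{\varepsilon})\le\hat f(\mu)(A^{\varepsilon})+\varepsilon$, and since $A$ is arbitrary, (\ref{measure-metric-def}) yields $\mathcal{P}_d(\hat f_n(\mu),\hat f(\mu))\le\varepsilon$. This bound is uniform in $\mu$, so $\hat f_n\to\hat f$ uniformly on $\mathcal{M}(X)$.

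For the converse, I would restrict to Dirac measures. Assuming $\hat f_n\to\hat f$ uniformly, fix $\varepsilon\in(0,1)$ (the case $\varepsilon\ge1$ being vacuous since $\mathcal{P}_d\le1$) and choose $N$ with $\mathcal{P}_d(\hat f_n(\mu),\hat f(\mu))<\varepsilon$ for all $\mu\in\mathcal{M}(X)$ and $n\ge N$. Taking $\mu=\delta_x$ and using $\hat f_n(\delta_x)=\delta_{f_n(x)}$, $\hat f(\delta_x)=\delta_{f(x)}$ together with Lemma \ref{basic result}(i), one gets $\min\{d(f_n(x),f(x)),1\}=\mathcal{P}_d(\delta_{f_n(x)},\delta_{f(x)})<\varepsilon<1$, which forces $d(f_n(x),f(x))<\varepsilon$. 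As $x$ is arbitrary, $f_n\to f$ uniformly on $X$.

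I do not expect a genuine obstacle: both directions are only a few lines. The two points that need a little attention are that $f_n^{-1}(A)$ and $f^{-1}(A^{\varepsilon})$ have to be Borel sets for the measures to be evaluated on them (automatic since the $f_n$ are continuous and $A^{\varepsilon}$ is open), and that it is precisely the asymmetric form (\ref{measure-metric-def}) of the Prohorov metric — not its symmetric definition — that makes the one-sided inclusion $f_n^{-1}(A)\subseteq f^{-1}(A^{\varepsilon})$ enough, so that no matching reverse inclusion is required.
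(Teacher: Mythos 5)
Your proposal is correct and follows essentially the same route as the paper's proof: the forward direction via the inclusion $f_n^{-1}(A)\subseteq f^{-1}(A^{\varepsilon})$ combined with the one-sided formula (\ref{measure-metric-def}), and the converse by restricting to Dirac measures and invoking Lemma \ref{basic result}(i). Your added remarks on measurability and on why $\varepsilon<1$ forces $d(f_n(x),f(x))<\varepsilon$ are minor refinements of the same argument.
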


\begin{proof}
If $f_n$ converges uniformly to $f$ on $X$, then  for any $\varepsilon>0$, there exists $N>0$  such that  $d(f_n(x),f(x))<\varepsilon$ for  $n\geq N$ and  $x\in X$.  Thus, $f_n^{-1}(A)\subset f^{-1}(A^\varepsilon)$ for  $n\geq N$ and  $A\in\mathcal{B}(X)$.  Let     $\mu\in\mathcal{M}(X)$. Then $f_n^{-1}(A)\subset f^{-1}(A^\varepsilon)$ implies $\hat f_n(\mu)(A)=\mu(f_n^{-1}(A))\leq \mu(f^{-1}(A^\varepsilon))=\hat f(\mu)(A^{\varepsilon})\leq \hat f(\mu)(A^{\varepsilon})+\varepsilon$ for  $A\in \mathcal{B}(X)$ and  $n\geq N$, which yields $\mathcal{P}_d(\hat f_n(\mu),\hat f(\mu))<\varepsilon$.
Thus,  $\hat f_n$ converges uniformly to $\hat f$.

Conversely, for any $\varepsilon\in (0,1)$, there exists $N>0$ such that $\mathcal{P}_d(\hat f_n(\mu),\hat f(\mu))<\varepsilon$ for  $n\geq N$ and $\mu\in\mathcal{M}(X)$.  By Lemma \ref{basic result} (i),  $d(f_n(x),f(x))=\mathcal{P}_d(\delta_{f_n(x)},\delta_{f(x)})=\mathcal{P}_d(\hat f_n(\delta_{x}),\hat f(\delta_{x}))<\varepsilon$ for  $x\in X$ and  $n\geq N$. Hence, $f_n$ converges uniformly to $f$.
\end{proof}

\begin{theorem}\label{transitivity-entropy}
{\rm(i)} If $(I,f)$ is   transitive, then $h(\hat f)=\infty$.

{\rm(ii)} There exists $(I,f_{0,\infty})$ such that $(I,f_{0,\infty})$ is transitive but $h(\hat f_{0,\infty})=0$.
\end{theorem}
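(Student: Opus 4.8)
The idea is to exhibit, inside $(\mathcal{M}(I),\hat f)$, a closed forward-invariant subsystem topologically conjugate to the $n$-fold product $(I^{n},f^{(n)})$ (where $f^{(n)}=f\times\cdots\times f$), for every $n\ge 1$; since a transitive interval map has positive topological entropy, this product has entropy $n\,h(f)$, and letting $n\to\infty$ forces $h(\hat f)=\infty$. The subtlety is that the natural map $\varphi_{n}$ of \eqref{2.3} is only finite-to-one, so I would replace it by an \emph{injective} variant using unequal weights: fix $w_{1},\dots,w_{n}>0$ with $\sum_i w_i=1$ and all $2^{n}$ subset sums $\sum_{i\in S}w_{i}$ pairwise distinct (e.g.\ $w_{i}=2^{n-i}/(2^{n}-1)$), and set $\psi_{n}(x_{1},\dots,x_{n})=\sum_{i=1}^{n}w_{i}\delta_{x_{i}}$. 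First I would check $\psi_{n}$ is continuous and injective: from the atomic measure $\psi_n(x)$ one reads off each atom $v$ with its mass $m_v$, and the distinct-subset-sum property recovers exactly which coordinates $i$ have $x_i=v$; being a continuous injection from the compact space $I^{n}$, it is a homeomorphism onto its image. By Lemma \ref{basic result}(iii), $\hat f(\psi_{n}(x))=\sum_i w_i\delta_{f(x_i)}=\psi_{n}(f^{(n)}(x))$, so $\psi_n$ conjugates $(I^{n},f^{(n)})$ onto $(\psi_{n}(I^{n}),\hat f|_{\psi_{n}(I^{n})})$, and $\psi_n(I^{n})$ is compact (hence closed) and forward invariant under $\hat f$. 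Finally I would invoke that a transitive interval map satisfies $h(f)\ge\tfrac12\log2>0$ (Theorem 2.20 and the surrounding discussion in \cite{R17}, or the existence of horseshoes for mixing interval maps), the product formula $h(f^{(n)})=n\,h(f)$, and monotonicity of entropy under closed forward-invariant subsets, to obtain $h(\hat f)\ge h(\hat f|_{\psi_{n}(I^{n})})=n\,h(f)$ for all $n$, whence $h(\hat f)=\infty$.

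\textbf{Plan for (ii).} The construction is to apply a fixed chaotic map only along a sparse (logarithmic-density) set of times. Let $g\colon I\to I$ be the tent map (topologically mixing, finite entropy) and put $f_{n}=g$ if $n=2^{k}$ for some $k\ge0$, and $f_{n}=\mathrm{id}_{I}$ otherwise. Then $f_{0}^{n}=g^{c(n)}$, where $c(n)=\#\{k\ge0:2^{k}\le n-1\}=O(\log n)$ and $c(n)\to\infty$; since $g$ is mixing, for any nonempty open $U,V\subset I$ we get $g^{c(n)}(U)\cap V\ne\emptyset$ for all large $n$, so $(I,f_{0,\infty})$ is transitive (in fact mixing). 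For the entropy, note $\hat f_{n}=\mathrm{id}$ whenever $f_{n}=\mathrm{id}$ and $\widehat{g\circ h}=\hat g\circ\hat h$ is immediate from \eqref{1.1}, so $\hat f_{0}^{n}=\hat g^{c(n)}$. Since $c$ is non-decreasing with unit steps from $c(0)=0$, one has $\{c(j):0\le j\le n-1\}=\{0,1,\dots,c(n-1)\}$, so a subset of $\mathcal{M}(I)$ is $(n,\epsilon,\mathbf{Z^{+}})$-separated for $\hat f_{0,\infty}$ exactly when it is $(c(n-1)+1,\epsilon)$-separated for $(\mathcal{M}(I),\hat g)$; hence $s_{n}(\epsilon,\mathbf{Z^{+}},\hat f_{0,\infty},\mathcal{M}(I))=s_{c(n-1)+1}(\epsilon,\hat g)$. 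Any $(K,\epsilon)$-separated set for a self-map of the \emph{compact} space $\mathcal{M}(I)$ has cardinality at most $R_{\epsilon}^{K}$, where $R_{\epsilon}$ is the number of $(\epsilon/2)$-balls needed to cover $\mathcal{M}(I)$ (a standard covering argument). Therefore $\tfrac1n\log s_{n}(\epsilon,\mathbf{Z^{+}},\hat f_{0,\infty},\mathcal{M}(I))\le\tfrac{c(n-1)+1}{n}\log R_{\epsilon}\to0$ for each fixed $\epsilon$, so $h(\hat f_{0,\infty})=0$ by \eqref{topological sequence entropy}.

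\textbf{Main obstacles.} For (i) the only genuine content is producing an honestly \emph{injective} equivariant embedding $I^{n}\hookrightarrow\mathcal{M}(I)$ — equal weights give merely an $n!$-to-one factor and only an upper bound on entropy, so one needs the unequal-weight (distinct-subset-sum) trick — together with citing the positive-entropy theorem for transitive interval maps; the remaining steps are routine entropy inequalities. (Alternatively one could keep $\varphi_n$ and use Bowen's inequality on the finite fibers, but the embedding is cleaner.) For (ii) the delicate balance is keeping the non-autonomous system transitive while driving both its entropy and that of its induced system to zero: this works precisely because a logarithmic number of applications of $g$ already forces transitivity yet contributes only $O(\log n)$ to the exponential count of distinguishable orbit segments, and because finiteness of the covering numbers $R_{\epsilon}$ of the compact space $\mathcal{M}(I)$ at each fixed scale $\epsilon$ lets the counting argument run verbatim on $\mathcal{M}(I)$ even though $h(\hat g)=\infty$ by part (i).
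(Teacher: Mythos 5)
Your proposal is correct for both parts, but each half takes a genuinely different route from the paper. For (i) the paper is a two-line citation: $h(f)\ge\tfrac12\log 2$ for transitive interval maps (Corollary 3.6 of \cite{BC87}) and then Proposition 6 of \cite{Bauer}, which is precisely the implication ``$h(f)>0\Rightarrow h(\hat f)=\infty$''. Your unequal-weight embedding $\psi_n(x)=\sum_i w_i\delta_{x_i}$ with distinct subset sums is in effect a self-contained proof of that cited proposition: $\psi_n$ is a continuous injection from the compact space $I^{n}$, hence a homeomorphism onto a closed $\hat f$-invariant set conjugating $f^{(n)}$ to $\hat f$ there, and monotonicity plus the product formula give $h(\hat f)\ge n\,h(f)$ for every $n$. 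This buys you independence from \cite{Bauer} at the cost of a slightly longer argument; the injectivity trick (binary weights) is sound and neatly avoids the $n!$-to-one issue with the equal-weight map $\varphi_n$. For (ii) the paper constructs connect-the-dots maps $F_m$ and a sequence $f_n$ converging uniformly to the identity, checks transitivity by hand, and then concludes $h(\hat f_{0,\infty})\le h(\widehat{\mathrm{id}})=0$ from Lemma \ref{converge uniform} together with Theorem E of \cite{Kolyada96}. Your construction instead fires a single mixing map $g$ only at the times $2^{k}$: transitivity is immediate from mixing of $g$ because $c(n)\to\infty$, and the entropy vanishes by the elementary covering bound $s_n\le R_{\epsilon}^{\,c(n-1)+1}$ with $c(n)=O(\log n)$, using only compactness of $\mathcal{M}(I)$ and the identities $\hat f_0^{\,n}=\hat g^{\,c(n)}$ and $\{c(j):0\le j\le n-1\}=\{0,\dots,c(n-1)\}$. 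This is more elementary and avoids both the uniform-convergence lemma and the appeal to \cite{Kolyada96}; the paper's version yields the additional (unneeded) feature that the generating maps themselves tend to the identity. The only steps you should write out in full are the distinct-subset-sum recovery argument for injectivity of $\psi_n$ and the standard estimate that a $(K,\epsilon)$-separated set in a compact space has at most $R_{\epsilon}^{K}$ elements; both are routine.
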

\begin{proof}
(i) Corollary 3.6 in \cite{BC87}  ensures that $h(f)\geq {1\over 2}\log 2$. Thus, $h(\hat f)=\infty$ by Proposition 6 in \cite{Bauer}.

(ii)
Let $I=[0,1]$. First, we construct a family of functions  $F_m:I\to I$ for $m>0$. Divide $I$ into $m$ intervals $J_i\triangleq[a_i,a_{i+1}]$, $0\leq i\leq m-1$, where $a_i={i\over m}$.
For any $0\leq i\leq m-1$, put $c_i,d_i\in J_i$ with $c_i=a_i+{1\over3m}$, $d_i=a_i+{2\over3m}$, $d_{-1}=0$ and $c_m=1$. The map $F_m$ is the connect-the-dots map
such that $F_m(a_i)=a_i$, $F_m(c_i)=c_{i+1}$ and $F_m(d_i)=d_{i-1}$ for   $0\leq i\leq m-1$. Then $(I,F_m)$ is  exact for any fixed $m\geq 1$.
Next,  inductively define the maps $\{f_n\}_{n=0}^{\infty}$. Let $\mathcal{A}_{n}\triangleq\{[{{i}\over{2^{n}}},{{i+1}\over{2^{n}}}]:i=0,\cdots,2^{n}-1\}$.
Then there exists $s_1\geq 1$ such that $\underbrace{F_{1}\circ\cdots\circ F_{1}}_{s_1}(J)=I$ for  $J\in\mathcal{A}_{1}$. Denote $f_i\triangleq F_1$ for  $0\leq i\leq s_1-1$.
Assume that we have already defined $s_1<s_2<\cdots<s_n$ and $\{f_j\}_{j=0}^{s_n-1}$ such that $f_{0}^{s_k}(J)=I$ for  $J\in\mathcal{A}_{k}$ and $1\leq k\leq n$.
Let us define $s_{n+1}$ and  $\{f_j\}_{j=s_n}^{s_{n+1}-1}$.
For any $J\in\mathcal{A}_{n+1}$, there exists
$l\geq 1$ such that $\underbrace{F_{n+1}\circ\cdots\circ F_{n+1}}_{l}(f_{0}^{s_n}(J))=I$. Denote $s_{n+1}\triangleq s_n+l$ and $f_j\triangleq F_{n+1}$ for $s_n\leq j\leq s_{n+1}-1$.
By the above construction, we get that $ f_n$ converges uniformly to ${id}$ on $I$, and $f_{0}^{s_{n}}(J)=I$ for  $n\geq 1$
and $J\in\mathcal{A}_{n}$, which means that  $(I, f_{0,\infty})$ is transitive.
This was first shown in
  Theorem 12 of \cite{BO12}.
 By Lemma \ref{converge uniform}, $\hat f_n$ converges uniformly to $\hat {id}$ on $\mathcal{M}(I)$.
It then follows from Theorem E in \cite{Kolyada96} that $h(\hat f_{0,\infty})\leq h(\hat {id})=0$.
\end{proof}
\subsection{Weak mixing} In \cite{Bauer}, it was proven that weak mixing of order $2$ is equivalent between an autonomous system $(X,f)$ and $(\mathcal{M}(X),\hat{f})$.  However, as we will  show in  Theorem \ref{weak mixing order 2} and Proposition \ref{weak mixing some order all orders},  weak mixing of order $2$ of $(X,f_{0,\infty})$ can not carry over to
 $(\mathcal{M}(X),\hat{f}_{0,\infty})$ in general, while weak mixing of order $n$ of $(\mathcal{M}(X),\hat{f}_{0,\infty})$ is sufficient to prove that of a non-autonomous system $(X,f_{0,\infty})$  for  any $n\geq2$.
\begin{theorem}\label{weak mixing order 2}
There exists
$(X,f_{0,\infty})$ such that

{\rm(1)} it is weakly mixing of order $2$;

{\rm(2)}  $(\mathcal{M}(X),\hat{f}_{0,\infty})$ is not weakly mixing of order $2$.
\end{theorem}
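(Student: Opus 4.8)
The strategy is to exploit the one genuinely non-autonomous phenomenon available here: unlike the autonomous case, where weak mixing of order $2$ propagates to all orders by Furstenberg's theorem (this is precisely why Bauer and Sigmund \cite{Bauer} obtained the equivalence on $\mathcal M(X)$), a non-autonomous system can be weakly mixing of order $2$ without being weakly mixing of order $3$. I would construct such a system $(X,f_{0,\infty})$ and then show that the failure of order-$3$ weak mixing already obstructs order-$2$ weak mixing of $(\mathcal M(X),\hat f_{0,\infty})$.

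The transfer step is the easy half. Suppose $(\mathcal M(X),\hat f_{0,\infty})$ were weakly mixing of order $2$, and let $U_1,U_2,U_3,V_1,V_3$ be nonempty open subsets of $X$, with $U_1,U_2$ not singletons. Pick $a_1\in U_1$, $a_2\in U_2$ distinct, $a_3\in U_3$, $b_1\in V_1$, $b_3\in V_3$, and $\varepsilon\in(0,\tfrac14)$ small enough that the closed $\varepsilon$-balls about $a_i$ lie in $U_i$ and those about $b_1,b_3$ lie in $V_1,V_3$. Apply order-$2$ weak mixing to the nonempty open sets
\begin{align*}
\mathcal U_1&=B_{\mathcal P_d}\big(\tfrac12(\delta_{a_1}+\delta_{a_2}),\varepsilon\big), & \mathcal V_1&=B_{\mathcal P_d}(\delta_{b_1},\varepsilon),\\
\mathcal U_2&=B_{\mathcal P_d}(\delta_{a_3},\varepsilon), & \mathcal V_2&=B_{\mathcal P_d}(\delta_{b_3},\varepsilon)
\end{align*}
of $\mathcal M(X)$ to get $k\ge1$, $\mu\in\mathcal U_1$, $\nu\in\mathcal U_2$ with $\hat f_0^k(\mu)\in\mathcal V_1$, $\hat f_0^k(\nu)\in\mathcal V_2$. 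Evaluating the Prohorov inequalities on the singletons $\{a_1\},\{a_2\},\{b_1\},\{a_3\},\{b_3\}$ gives $\mu(B_d(a_i,\varepsilon))\ge\tfrac12-\varepsilon$ ($i=1,2$), $\mu(f_0^{-k}(B_d(b_1,\varepsilon)))=\hat f_0^k(\mu)(B_d(b_1,\varepsilon))\ge1-\varepsilon$, $\nu(B_d(a_3,\varepsilon))\ge1-\varepsilon$ and $\nu(f_0^{-k}(B_d(b_3,\varepsilon)))\ge1-\varepsilon$; hence $B_d(a_1,\varepsilon)\cap f_0^{-k}(B_d(b_1,\varepsilon))$ and $B_d(a_2,\varepsilon)\cap f_0^{-k}(B_d(b_1,\varepsilon))$ have $\mu$-mass $\ge\tfrac12-2\varepsilon>0$, and $B_d(a_3,\varepsilon)\cap f_0^{-k}(B_d(b_3,\varepsilon))$ has $\nu$-mass $\ge1-2\varepsilon>0$, so all three are nonempty. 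Therefore $k\in N(U_1,V_1)\cap N(U_2,V_1)\cap N(U_3,V_3)$, and this set is nonempty for every admissible choice. So it is enough to build $(X,f_{0,\infty})$ that is weakly mixing of order $2$ and yet has nonempty open $U_1,U_2,U_3,V_1,V_3$ (with $U_1,U_2$ not singletons) satisfying $N(U_1,V_1)\cap N(U_2,V_1)\cap N(U_3,V_3)=\emptyset$.

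The construction is the hard part. What I need is a non-autonomous system on a perfect compact space whose return sets $N(U,V)$ are rich enough to intersect pairwise for all nonempty open $U,V$ (forcing weak mixing of order $2$, and leaving no room for an equicontinuous factor), but that carries a \emph{dynamical} parity obstruction: there should be nonempty open $R,T$ with $N(R,T)$ supported on composition lengths of one fixed parity, and nonempty open $U_1,U_2,V_1$ for which $U_1\to V_1$ and $U_2\to V_1$ cannot be realized at the same time at lengths of that parity, so that $N(U_1,V_1)\cap N(U_2,V_1)$ misses $N(R,T)$ (and, with $U_3=R$, $V_3=T$, one gets the empty triple intersection above). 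A plausible model is a subshift (or a connect-the-dots interval map in the spirit of the construction of Theorem~\ref{transitivity-entropy}) in which the maps $f_n$ switch, at carefully spaced times, between two ``phases'' arranged so that no single nonempty open set is ever confined to one phase --- which keeps the pair-return sets thick enough to always meet --- while the phase in force at length $n$ decides which of two competing connections into a common target window is available at that length, placing the two connections at opposite parities. Verifying that such a system is actually weakly mixing of order $2$ (a statement about all long compositions $f_0^n$) and pinning down the open sets realizing $N(U_1,V_1)\cap N(U_2,V_1)\cap N(U_3,V_3)=\emptyset$ is where the combinatorial work will concentrate.

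I expect this construction --- not the transfer --- to be the main obstacle: weak mixing of order $2$ forces every $N(U_1,V_1)\cap N(U_2,V_1)$ to be nonempty and every $N(U,V)$ to meet the parity-restricted set $N(R,T)$, so the separation of the triple has to be engineered with care, and the obstruction is necessarily combinatorial (a parity of composition lengths) rather than structural, since a weakly mixing system admits no nontrivial equicontinuous factor. By contrast, the reverse implication of Proposition~\ref{weak mixing some order all orders} (order-$n$ weak mixing of $(\mathcal M(X),\hat f_{0,\infty})$ descends to $(X,f_{0,\infty})$ for every $n\ge2$) is soft, obtained by feeding the open sets $\{\mu:\mu(U_i)>1-\varepsilon\}$ of $\mathcal M(X)$ into the argument of Proposition~\ref{transitive}.
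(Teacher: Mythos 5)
Your ``transfer step'' is correct and is essentially the same computation the paper performs: from a measure close to a finitely supported combination of Dirac masses whose image is close to a Dirac mass, the Prohorov inequalities force each atom's $\varepsilon$-ball to meet the preimage of the target ball (the paper runs this with $\tfrac13(\delta_{a_1}+\delta_{a_2}+\delta_{a_3})$ and two targets $\delta_{a_1},\delta_{a_2}$ rather than your $\tfrac12(\delta_{a_1}+\delta_{a_2})$ plus an auxiliary pair, but the estimate is identical). The genuine gap is that you never produce the system: the entire second half of your argument is a heuristic description of a subshift or connect-the-dots scheme with a parity obstruction, together with an acknowledgement that verifying order-$2$ weak mixing and pinning down the empty triple intersection ``is where the combinatorial work will concentrate.'' Since the theorem is an existence statement, the unconstructed example is the theorem; without it there is no proof.

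It is also worth noting that your guiding intuition --- that the obstruction must be combinatorial (a parity of composition lengths) ``rather than structural, since a weakly mixing system admits no nontrivial equicontinuous factor'' --- is not how the actual example works, and the appeal to equicontinuous factors is an autonomous-dynamics heuristic that does not constrain the non-autonomous setting. The paper takes the known order-$2$ weakly mixing system of Balibrea--Oprocha on $\mathbb{S}^1$: $f_{0,\infty}$ enumerates all finite words in an irrational rotation $R$ and the map $T(x)=\tfrac12 x+\tfrac12 x^2$ (lifted to the circle), each word immediately followed by its inverse. Every composition $f_0^n$ is then an order-preserving homeomorphism of the circle, so the images of three small disjoint arcs around $a_1,a_2,a_3$ are again three disjoint connected arcs in the same cyclic order; if all three met $B_d(a_1,\varepsilon_1)$ and all three met $B_d(a_2,\varepsilon_1)$, one of them would be trapped inside $B_d(a_1,\varepsilon_1)$ and could not reach the disjoint ball $B_d(a_2,\varepsilon_1)$. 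The obstruction is thus purely geometric and requires no bookkeeping of which maps act at which times; if you want to complete your proposal, importing this circle example (or something with an analogous order-preservation constraint) is a much shorter route than engineering a parity-based subshift, whose order-$2$ weak mixing you would still have to verify from scratch.
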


\begin{proof}
Recall that  $\mathbb{S}^1$ is  the unit circle and  $R: \mathbb{S}^1\to\mathbb{S}^1$ is an
irrational rotation. Let $T(x)=(1/2)x+(1/2)x^{2}$ for $x\in[0,1]$. We regard  $T$ the same  with its lift to $\mathbb{S}^1$
obtained by identifying the endpoints of the interval, where $z=1\in\mathbb{S}^1$ is the point where these endpoints are ``glued" together.
Let $\{g_n\}_{n=0}^{\infty}$ be the sequence of all possible finite sequences of  $T$ and $R$; that is,
\begin{align*}
\{g_n\}_{n=0}^{\infty}=\{R, T, (R\circ R), (R\circ T), (T\circ R), (T\circ T), (R\circ R\circ R), (R\circ R\circ T),\cdots\},
\end{align*}
and
\begin{align*}
\{f_n\}_{n=0}^{\infty}&=\{g_0, g_0^{-1}, g_1, g_1^{-1}, g_2, g_2^{-1},\cdots\}.
\end{align*}
It was proved in Theorem 6 of \cite{BO12} that $(\mathbb{S}^1,f_{0,\infty})$ is weakly mixing of order $2$.

Next, we prove that $(\mathcal{M}(\mathbb{S}^1),\hat{f}_{0,\infty})$ is not weakly mixing of order $2$.
Let $a_1$, $a_2$ and $a_3$ be three points ordered clockwise in $\mathbb{S}^1$. Choose $\varepsilon_0\in (0,{1\over3})$ and $\varepsilon_1\in(0,{1\over3}-\varepsilon_0)$ small enough such that $B_d(a_i,\varepsilon_1)$ are disjointed for $i=1,2,3.$
Define $\mu={1\over3}(\delta_{a_1}+\delta_{a_2}+\delta_{a_3})$. For a fixed $j_0\in\{1,2,3\}$, we claim that if there exists $n_0\geq0$ such that
\begin{align}\label{condition-measure-intersect-non-empty}
\hat f_0^{n_0}(B_{\mathcal{P}_d}(\mu,\varepsilon_0))\cap B_{\mathcal{P}_d}(\delta_{a_{j_0}},\varepsilon_1)\neq\emptyset,
\end{align}
 then
\begin{align}\label{conclusion-measure-intersect-non-empty}
f_0^{n_0}(B_d(a_{i},\varepsilon_0))\cap B_d(a_{j_0},\varepsilon_1)\neq\emptyset, \;\;i=1,2,3.
\end{align}
We only prove $f_0^{n_0}(B_d(a_{1},\varepsilon_0))\cap B_d(a_{j_0},\varepsilon_1)\neq\emptyset$ since others can be proved similarly.
By \eqref{condition-measure-intersect-non-empty}, there exists $\nu_0\in B_{\mathcal{P}_d}(\mu,\varepsilon_0)$ such that $\hat f_0^{n_0}(\nu_0)\in B_{\mathcal{P}_d}(\delta_{a_{j_0}},\varepsilon_1)$. Then
\begin{align*}
{1\over 3}=\mu(\{a_1\})\leq \nu_0(B_d(a_1,\varepsilon_0))+\varepsilon_0\Rightarrow \nu_0(B_d(a_1,\varepsilon_0))\geq {1\over3}-\varepsilon_0,
\end{align*}
and
\begin{align*}
1=\delta_{a_{j_0}}(\{a_{j_0}\})\leq \nu_0(f_0^{-n_0}(B_d(a_{j_0},\varepsilon_1)))+\varepsilon_1\Rightarrow \nu_0(f_0^{-n_0}(B_d(a_{j_0},\varepsilon_1)))\geq 1-\varepsilon_1.
\end{align*}
Thus, we have $\nu_0(B_d(a_1,\varepsilon_0))+\nu_0(f_0^{-n_0}(B_d(a_{j_0},\varepsilon_1)))\geq {4\over3}-(\varepsilon_0+\varepsilon_1)>1$. This means that
$f_0^{n_0}(B_d(a_1,\varepsilon_0))\cap B_d(a_{j_0},\varepsilon_1)\neq\emptyset.$

Suppose that  $(\mathcal{M}(\mathbb{S}^1),\hat{f}_{0,\infty})$ is  weakly mixing of order $2$.
Then there exists $n_1\geq 0$ such that
\begin{align*}
\hat f_0^{n_1}(B_{\mathcal{P}_d}(\mu,\varepsilon_0))\cap B_{\mathcal{P}_d}(\delta_{a_{1}},\varepsilon_1)\neq\emptyset,\;\;
\hat f_0^{n_1}(B_{\mathcal{P}_d}(\mu,\varepsilon_0))\cap B_{\mathcal{P}_d}(\delta_{a_{2}},\varepsilon_1)\neq\emptyset.
\end{align*}
Then by \eqref{conclusion-measure-intersect-non-empty}, we have
\begin{align}\label{measure-intersect-nonempty-contradition}
f_0^{n_1}(B_d(a_{i},\varepsilon_0))\cap B_d(a_{j},\varepsilon_1)\neq\emptyset, \;\;i=1,2,3,j=1,2.
\end{align}
Since $R$ and $T$ are order preserving homeomorphisms, we have ${f}_{0}^{n_1}(B_d(a_{1},\varepsilon_0)), {f}_{0}^{n_1}(B_d(a_{2},\varepsilon_0))$ and $ {f}_{0}^{n_1}(B_d(a_{3},\varepsilon_0))
$ are ordered clockwise, disjoint and connected. Thus, we get by \eqref{measure-intersect-nonempty-contradition} that there exists $i_1\in\{1,2,3\}$ such that ${f}_{0}^{n_1}(B_d(a_{i_1},\varepsilon_0))\subset B_d(a_{1},\varepsilon_1)$, which contradicts that ${f}_{0}^{n_1}(B_d(a_{i_1},\varepsilon_0))\cap B_d(a_{2},\varepsilon_1)\neq\emptyset$ and $B_d(a_{1},\varepsilon_1)\cap B_d(a_{2},\varepsilon_1)=\emptyset$.
\end{proof}
On the other hand, we also get  the interrelations of weak mixing between $(X,f_{0,\infty})$ and $(\mathcal{M}(X),\hat f_{0,\infty})$ in the next
proposition.
A  technical lemma will be needed here.
\begin{lemma}\label{3.2}
Let $\mathcal{U}_{1},\cdots,\mathcal{U}_{n}$ be nonempty open subsets of $\mathcal{M}(X)$. Then there exists $k\geq 1$
such that for  $1\leq j\leq n$, there exist nonempty open subsets $U_{1}^{j},\cdots,U_{k}^{j}$ of $X$ satisfying that $\frac{1}{k}\sum\limits_{i=1}^{k}\delta_{y_i^{j}}\in \mathcal{U}_{j}$ for
$y_{i}^{j}\in U_{i}^{j}$, $1\leq i\leq k$.
\end{lemma}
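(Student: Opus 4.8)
The plan is to first locate, inside each $\mathcal{U}_j$, a measure that is a finite average of Dirac masses, then arrange for all these averages to have the same number of atoms, and finally thicken each atom to a small open set using the openness of $\mathcal{U}_j$.

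First I would invoke Lemma \ref{basic result}(ii): since $\mathcal{M}_{\infty}(X)=\cup_{m\geq1}\mathcal{M}_m(X)$ is dense in $\mathcal{M}(X)$ and each $\mathcal{U}_j$ is nonempty and open, for every $j\in\{1,\dots,n\}$ there exist $k_j\geq1$ and points $x_1^j,\dots,x_{k_j}^j\in X$ with $\mu_j:=\frac1{k_j}\sum_{i=1}^{k_j}\delta_{x_i^j}\in\mathcal{U}_j$. The key observation for obtaining a single $k$ is that $\mathcal{M}_m(X)\subset\mathcal{M}_{\ell m}(X)$ for every $\ell\geq1$: given $\mu=\frac1m\sum_{i=1}^m\delta_{z_i}$, listing each atom $z_i$ exactly $\ell$ times yields $\mu=\frac1{\ell m}\sum_{i=1}^m\sum_{s=1}^{\ell}\delta_{z_i}$. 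Taking $k:=k_1k_2\cdots k_n$ (the least common multiple would also do), each $\mu_j$ can therefore be rewritten as $\mu_j=\frac1k\sum_{i=1}^k\delta_{w_i^j}$ for a suitable $k$-tuple $(w_1^j,\dots,w_k^j)\in X^k$, obtained by listing each original atom $x_i^j$ exactly $k/k_j$ times; in the notation of \eqref{2.3}, $\mu_j=\varphi_k(w_1^j,\dots,w_k^j)$.

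Finally, fix $j$. Since $\mathcal{U}_j$ is open and $\mu_j\in\mathcal{U}_j$, choose $\varepsilon_j>0$ with $B_{\mathcal{P}_d}(\mu_j,\varepsilon_j)\subset\mathcal{U}_j$. Because $\varphi_k$ is continuous on $X^k$ (as noted after \eqref{2.3}), the set $\varphi_k^{-1}\big(B_{\mathcal{P}_d}(\mu_j,\varepsilon_j)\big)$ is an open neighborhood of $(w_1^j,\dots,w_k^j)$ in $X^k$, hence contains a basic open box $U_1^j\times\cdots\times U_k^j$ around that point. Then for any choice $y_i^j\in U_i^j$ with $1\leq i\leq k$, we get $\frac1k\sum_{i=1}^k\delta_{y_i^j}=\varphi_k(y_1^j,\dots,y_k^j)\in B_{\mathcal{P}_d}(\mu_j,\varepsilon_j)\subset\mathcal{U}_j$, which is exactly the assertion. (If one prefers to bypass $\varphi_k$, the same conclusion follows from the Strassen formula \eqref{measure-metric-def}: when $d(y_i^j,w_i^j)<\varepsilon$ for all $i$, one has $\{i:w_i^j\in A\}\subset\{i:y_i^j\in A^{\varepsilon}\}$ for every $A\in\mathcal{B}(X)$, so $\mathcal{P}_d\big(\frac1k\sum\delta_{y_i^j},\mu_j\big)\leq\varepsilon$; one then takes $U_i^j=B_d(w_i^j,\varepsilon)$ with $\varepsilon$ small enough that this forces membership in $\mathcal{U}_j$.)

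The only real point of the statement is producing one $k$ that serves all $j$ simultaneously; the padding/repetition trick dissolves this at once, and the remaining steps are just density of $\mathcal{M}_\infty(X)$ plus continuity of $\varphi_k$ (equivalently, the elementary Prohorov estimate). So I do not anticipate a genuine obstacle here — it is a preparatory lemma — beyond being careful that the boxes $U_i^j$ are chosen inside the preimage of the $\varepsilon_j$-ball and not merely near it.
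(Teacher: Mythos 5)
Your proof is correct and follows essentially the same route as the paper: density of $\mathcal{M}_\infty(X)$ to find finitely supported measures in each $\mathcal{U}_j$, then continuity of $\varphi_k$ to thicken the atoms into open sets. The one point you treat more carefully than the paper is the extraction of a single common $k$ via the atom-repetition (padding) argument, which the paper simply asserts when invoking Lemma \ref{basic result}(ii).
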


\begin{proof}
By Lemma \ref{basic result} (ii), we can choose  $k\geq 1$ such that there exists  $\mu_{j}\triangleq{1\over k}\sum\limits_{i=1}^k\delta_{x_i^j}\in\mathcal{U}_{j}$ for   $1\leq j\leq n$, where $x^{j}\triangleq(x_1^{j},\cdots,x_k^{j})\in X^{k}$.
Clearly,
$\varphi_k(x^{j})=\mu_j$, where $\varphi_k$ is given by (\ref{2.3}).
Choose $\epsilon>0$ such that $B_{\mathcal{P}_{d}}(\mu_{j},\epsilon)\subset \mathcal{U}_{j}$ for  $1\leq j\leq n$.    By the continuity of $\varphi_k$,
there exists $\delta>0$ such that  $\mathcal{P}_{d}(\varphi_k(x),\varphi_k(y))<\epsilon$ for  $x,y\in X^{k}$ with $d_k(x,y)<\delta$.
Let $U_{i}^{j}\triangleq B_{d}(x_i^{j},\delta)$ for $1\leq i\leq k$, and $y^{j}\triangleq(y_1^{j},\cdots,y_k^{j})\in \prod\limits_{i=1}^kU_{i}^{j}$. Then $d_k(x^{j},y^{j})=\max\limits_{1\leq i\leq k}d(x_i^{j},y_i^{j})<\delta$. Thus, $\varphi_k(y^{j})=\frac{1}{k}\sum\limits_{i=1}^{k}\delta_{y_i^{j}}\in B_{\mathcal{P}_{d}}(\mu_j,\epsilon)\subset\mathcal{U}_j$ for
$1\leq j\leq n$.
\end{proof}

\begin{proposition}\label{weak mixing some order all orders}
If $(X,f_{0,\infty})$ is weakly mixing of all orders, then so is $(\mathcal{M}(X),\hat f_{0,\infty})$.
Conversely, if $(\mathcal{M}(X),\hat f_{0,\infty})$ is weakly mixing of some order $n$, then so is $(X,f_{0,\infty})$.
\end{proposition}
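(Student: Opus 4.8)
\emph{Plan.} I would prove the two implications by dual transfer arguments: for the forward direction, push the given open subsets of $\mathcal{M}(X)$ down to finite families of open subsets of $X$ using Lemma \ref{3.2}; for the converse, lift the given open subsets of $X$ up to $\mathcal{M}(X)$ via the ``majority-mass'' sets already exploited in the proof of Proposition \ref{transitive}.

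\emph{Forward implication.} Assume $(X,f_{0,\infty})$ is weakly mixing of all orders and fix nonempty open sets $\mathcal{U}_1,\dots,\mathcal{U}_n,\mathcal{V}_1,\dots,\mathcal{V}_n\subset\mathcal{M}(X)$. Applying Lemma \ref{3.2} to all $2n$ sets at once yields a single $k\geq1$ and nonempty open sets $U_i^j,W_i^j\subset X$ ($1\leq i\leq k$, $1\leq j\leq n$) such that $\frac1k\sum_{i=1}^k\delta_{y_i}\in\mathcal{U}_j$ whenever $y_i\in U_i^j$, and $\frac1k\sum_{i=1}^k\delta_{z_i}\in\mathcal{V}_j$ whenever $z_i\in W_i^j$. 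Since $(X,f_{0,\infty})$ is weakly mixing of all orders, hence of order $nk$, I would pick $m\in\bigcap_{j=1}^n\bigcap_{i=1}^k N(U_i^j,W_i^j)$, then choose $y_i^j\in U_i^j$ with $f_0^m(y_i^j)\in W_i^j$ and set $\mu_j=\frac1k\sum_{i=1}^k\delta_{y_i^j}$. By Lemma \ref{basic result}(iii) (iterated) together with $\hat f_l(\delta_x)=\delta_{f_l(x)}$ one has $\hat f_0^m(\mu_j)=\frac1k\sum_{i=1}^k\delta_{f_0^m(y_i^j)}$, so $\mu_j\in\mathcal{U}_j$ and $\hat f_0^m(\mu_j)\in\mathcal{V}_j$; hence $m\in\bigcap_{j=1}^n N(\mathcal{U}_j,\mathcal{V}_j)$. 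As $n$ is arbitrary, $(\mathcal{M}(X),\hat f_{0,\infty})$ is weakly mixing of all orders.

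\emph{Converse.} Assume $(\mathcal{M}(X),\hat f_{0,\infty})$ is weakly mixing of order $n$ and fix nonempty open sets $U_1,\dots,U_n,V_1,\dots,V_n\subset X$. I would put $\mathcal{U}_j=\{\mu\in\mathcal{M}(X):\mu(U_j)>1/2\}$ and $\mathcal{V}_j=\{\nu\in\mathcal{M}(X):\nu(V_j)>1/2\}$; as in the proof of Proposition \ref{transitive} these are nonempty (they contain $\delta_x$ for $x$ in $U_j$, resp.\ $V_j$) and open, since $\mu\mapsto\mu(U)$ is lower semicontinuous in the weak$^*$ topology for open $U$. Weak mixing of order $n$ of $(\mathcal{M}(X),\hat f_{0,\infty})$ gives a common $m\geq1$ with $\hat f_0^m(\mathcal{U}_j)\cap\mathcal{V}_j\neq\emptyset$ for every $j$; picking $\mu_j\in\mathcal{U}_j$ with $\hat f_0^m(\mu_j)\in\mathcal{V}_j$ gives $\mu_j(U_j)>1/2$ and $\mu_j(f_0^{-m}(V_j))=\hat f_0^m(\mu_j)(V_j)>1/2$, so $U_j\cap f_0^{-m}(V_j)\neq\emptyset$, i.e.\ $m\in N(U_j,V_j)$. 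Therefore $m\in\bigcap_{j=1}^n N(U_j,V_j)$, which is just the argument of Proposition \ref{transitive} run $n$ times in parallel.

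\emph{Main obstacle.} Neither implication is deep; the one place that needs care is the uniformity in the forward direction. A naive transfer would invoke weak mixing of order $n$ of $(X,f_{0,\infty})$, whereas the argument actually needs order $nk$, because each target measure in $\mathcal{M}(X)$ must be realized as a $k$-fold Dirac average whose $k$ support points must all be moved correctly at the \emph{same} time $m$ --- exactly the uniform statement packaged by Lemma \ref{3.2}. The remaining points (openness of $\{\mu:\mu(U)>1/2\}$ in the weak$^*$ topology, and the linearity $\hat f_0^m(\sum_i\alpha_i\mu_i)=\sum_i\alpha_i\hat f_0^m(\mu_i)$) are immediate from the Portmanteau description of the weak$^*$ topology and Lemma \ref{basic result}(iii).
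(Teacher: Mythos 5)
Your proposal is correct and follows essentially the same route as the paper's proof: Lemma \ref{3.2} applied to all $2n$ open sets at once to reduce to weak mixing of order $nk$ on $X$ in the forward direction, and the majority-mass sets $\{\mu:\mu(U)>1/2\}$ run in parallel for the converse. Your explicit remark that order $nk$ (not $n$) is what the argument consumes is a point the paper leaves implicit, but the substance is identical.
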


\begin{proof}
Suppose that $(X,f_{0,\infty})$ is weakly mixing of all orders. Fix $n\geq2$. Let $\mathcal{U}_j$ and $\mathcal{V}_j$,
$1\leq j\leq n$, be nonempty open subsets of $\mathcal{M}(X)$. By Lemma \ref{3.2}, there exists
$k\geq 1$ such that for any $1\leq j\leq n$, there exist nonempty open subsets $U_{1}^{j},\cdots,U_{k}^{j}$,
$V_{1}^{j},\cdots,V_{k}^{j}$ of $X$ satisfying that
$\frac{1}{k}\sum\limits_{i=1}^{k}\delta_{y_i^{j}}\in \mathcal{U}_{j}$ and $\frac{1}{k}\sum\limits_{i=1}^{k}\delta_{z_i^{j}}\in \mathcal{V}_{j}$ for $y_{i}^{j}\in U_{i}^{j}$, $z_{i}^{j}\in V_{i}^{j}$ and $1\leq i\leq k$.
Since $(X,f_{0,\infty})$ is weakly mixing of all orders, there exist $k_1\geq 1$ and $\tilde y_{i}^{j}\in U_{i}^{j}$
such that $f_{0}^{k_1}(\tilde y_{i}^{j})\in V_{i}^{j}$ for $1\leq j\leq n$ and $1\leq i\leq k$. Denote $\mu_{j}\triangleq\frac{1}{k}\sum\limits_{i=1}^{k}\delta_{\tilde y_{i}^{j}}$. Then $\mu_{j}\in\mathcal{U}_j$ and
$\hat{f}_{0}^{k_1}(\mu_{j})=\frac{1}{k}\sum\limits_{i=1}^{k}\delta_{f_{0}^{k_1}(\tilde y_{i}^{j})}\in\mathcal{V}_j$
by Lemma \ref{basic result} (iii). So,
$
\hat{f}_{0}^{k_1}(\mathcal{U}_j)\cap \mathcal{V}_j\neq\emptyset$ for all $1\leq j\leq n.
$
Hence, $(\mathcal{M}(X),\hat f_{0,\infty})$ is weakly mixing of order $n$. Since $n$ is arbitrary,
$(\mathcal{M}(X),\hat f_{0,\infty})$ is weakly mixing of all orders.

Suppose that $(\mathcal{M}(X),\hat f_{0,\infty})$ is weakly mixing of some order $n\geq2$. Let $U_1,\cdots,U_n$, $V_1,\cdots,V_n$
be nonempty open subsets of $X$. Define
$\mathcal{U}_i\triangleq\{\mu\in\mathcal{M}(X): \mu(U_i)>1/2\}$
and
$\mathcal{V}_i\triangleq\{\mu\in\mathcal{M}(X): \mu(V_i)>1/2\}$ for $1\leq i\leq n,$
which  are clearly nonempty open subsets of $\mathcal{M}(X)$.
Since $(\mathcal{M}(X),\hat f_{0,\infty})$ is weakly mixing of order $n$, there exists $k>0$ such that
 for any $1\leq i\leq n$, there exists $\mu_i\in\mathcal{U}_i$ satisfying that $\hat{f}_{0}^{k}(\mu_i)\in\mathcal{V}_i$.
Thus, $\mu_i(U_i)>1/2$ and $\mu_i(f_{0}^{-k}(V_i))=\hat{f}_{0}^{k}(\mu_i)(V_i)>1/2$, which implies that
$f_{0}^{k}(U_i)\cap V_i\neq\emptyset$ for all $1\leq i\leq n.$
Hence, $(X,f_{0,\infty})$ is weakly mixing of order $n$.
\end{proof}
\subsection{Mixing and exactness}
In this subsection,  the connections of  mixing  and exactness between $(X,f_{0,\infty})$ and $(\mathcal{M}(X),\hat f_{0,\infty})$ are studied.

\begin{lemma}\label{3.1}
{\rm(i)} If $(X,f_{0,\infty})$ is  mixing,
then $(X^{n},f_{0,\infty}^{n})$ is  mixing for  $n\geq1$.

{\rm(ii)} If $(X,f_{0,\infty})$ is mild mixing,
then $(X^{n},f_{0,\infty}^{n})$ is  mild mixing for  $n\geq1$,
where $f_{0,\infty}^{n}=\underbrace{f_{0,\infty}\times\cdots\times f_{0,\infty}}_{n}$.
\end{lemma}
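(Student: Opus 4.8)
For part (i), the plan is to reduce to basic open boxes and exploit that the mixing constant can be chosen uniformly in the pair of open sets. Given nonempty open $\mathcal{U},\mathcal{V}\subset X^{n}$, I would pick basic open boxes $U_{1}\times\cdots\times U_{n}\subset\mathcal{U}$ and $V_{1}\times\cdots\times V_{n}\subset\mathcal{V}$ with each $U_{i},V_{i}$ nonempty open in $X$. Mixing of $(X,f_{0,\infty})$ furnishes a single $N_{0}\in\mathbf{Z^{+}}$ such that $f_{0}^{m}(U)\cap V\neq\emptyset$ for every $m\geq N_{0}$ and every nonempty open $U,V\subset X$; in particular $f_{0}^{m}(U_{i})\cap V_{i}\neq\emptyset$ for all $1\leq i\leq n$ and all $m\geq N_{0}$. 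Since the time-$m$ map of $(X^{n},f_{0,\infty}^{n})$ acts coordinatewise as $f_{0}^{m}\times\cdots\times f_{0}^{m}$, it sends $U_{1}\times\cdots\times U_{n}$ onto $f_{0}^{m}(U_{1})\times\cdots\times f_{0}^{m}(U_{n})$, which meets $V_{1}\times\cdots\times V_{n}$. Hence the time-$m$ map of the product sends $\mathcal{U}$ into a set meeting $\mathcal{V}$, for all $m\geq N_{0}$, so $(X^{n},f_{0,\infty}^{n})$ is mixing with the same constant $N_{0}$.

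For part (ii), the plan is an induction on $n$, using the characterization built into the definition (mild mixing of $(X^{n},f_{0,\infty}^{n})$ is exactly the statement that $(X^{n}\times Y,f_{0,\infty}^{n}\times g_{0,\infty})$ is transitive for every transitive $(Y,g_{0,\infty})$) together with the associativity of finite products. The case $n=1$ is the hypothesis. Assuming the statement for $n$, I would take an arbitrary transitive system $(Y,g_{0,\infty})$: mild mixing of $(X,f_{0,\infty})$ first gives that $(X\times Y,f_{0,\infty}\times g_{0,\infty})$ is transitive, and then the inductive hypothesis, applied to this transitive system as the new second factor, gives that $\big(X^{n}\times(X\times Y),\,f_{0,\infty}^{n}\times(f_{0,\infty}\times g_{0,\infty})\big)$ is transitive. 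Under the canonical homeomorphism $X^{n}\times X\times Y\cong X^{n+1}\times Y$, which commutes with every map occurring in the two sequences, this system is precisely $(X^{n+1}\times Y,\,f_{0,\infty}^{n+1}\times g_{0,\infty})$; since transitivity is plainly preserved under such a relabeling of coordinates, the induction closes and $(X^{n+1},f_{0,\infty}^{n+1})$ is mild mixing.

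Both arguments are essentially bookkeeping, and I do not expect a genuine obstacle. The two points that need care are: in (i), that the mixing constant $N_{0}$ is independent of the pair of open sets, which is what allows it to survive the simultaneous constraint on finitely many coordinates; and in (ii), that mild mixing is formulated relative to an \emph{arbitrary} transitive second factor, so that $(X\times Y,f_{0,\infty}\times g_{0,\infty})$ may be fed back in as the transitive factor at the next stage of the induction. The only remaining subtlety is keeping the non-autonomous product notation $f_{0,\infty}^{n}=f_{0,\infty}\times\cdots\times f_{0,\infty}$ and its time-$m$ composition straight throughout.
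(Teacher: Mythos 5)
Your proposal is correct and fills in exactly what the paper leaves implicit: the paper's entire proof is ``(i) is straightforward to verify, and (ii) is obtained by simple induction,'' and your box-reduction argument for (i) and your induction on $n$ (feeding $(X\times Y,f_{0,\infty}\times g_{0,\infty})$ back in as the transitive factor) for (ii) are precisely those arguments. The only cosmetic point is that in (i) one does not even need a single $N_0$ uniform over all pairs of open sets: taking the maximum of the $n$ constants for the finitely many pairs $(U_i,V_i)$ suffices under either reading of the paper's definition of mixing.
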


\begin{proof}
(i) is straightforward to verify, and (ii) is obtained by  simple induction.
\end{proof}

Motivated by \cite{Bauer,Wu}, we prove in the following two theorems that  mixing (resp., mild mixing and  exactness) of $(X,f_{0,\infty})$ is equivalent to that of $(\mathcal{M}(X),\hat f_{0,\infty})$.
\begin{theorem}\label{mixing}
{\rm(i)} $(X,f_{0,\infty})$ is  mixing  if and only if
$(\mathcal{M}(X),\hat f_{0,\infty})$ is  mixing.

{\rm(ii)} $(X,f_{0,\infty})$ is  mild mixing if and only if
$(\mathcal{M}(X),\hat f_{0,\infty})$ is  mild mixing.
\end{theorem}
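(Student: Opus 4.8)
The plan is to prove each of (i) and (ii) by splitting into two implications. The implication ``$(\mathcal{M}(X),\hat f_{0,\infty})$ mixing (resp.\ mild mixing) $\Rightarrow$ $(X,f_{0,\infty})$ mixing (resp.\ mild mixing)'' will be a direct adaptation of the argument used for Proposition \ref{transitive}; the reverse implication will combine Lemma \ref{3.2}, Lemma \ref{3.1}, and the affinity of $\hat f_0^n$ recorded in Lemma \ref{basic result}(iii).

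For the ``$\Leftarrow$'' part of (i): given nonempty open $U,V\subset X$, set $\mathcal{U}=\{\mu\in\mathcal{M}(X):\mu(U)>1/2\}$ and $\mathcal{V}=\{\nu\in\mathcal{M}(X):\nu(V)>1/2\}$, which are nonempty open in $\mathcal{M}(X)$. Mixing of $(\mathcal{M}(X),\hat f_{0,\infty})$ gives $N_0$ with $\hat f_0^n(\mathcal{U})\cap\mathcal{V}\neq\emptyset$ for all $n\geq N_0$; picking for each such $n$ a measure $\mu$ with $\mu(U)>1/2$ and $\mu(f_0^{-n}(V))=\hat f_0^n(\mu)(V)>1/2$ forces $f_0^n(U)\cap V\neq\emptyset$, so $N(U,V)\supset[N_0,+\infty)\cap\mathbf{Z^{+}}$. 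For the ``$\Rightarrow$'' part of (i): let $\mathcal{U},\mathcal{V}$ be nonempty open in $\mathcal{M}(X)$. Apply Lemma \ref{3.2} with $n=2$ to get $k\geq1$ and nonempty open $U_1,\dots,U_k,V_1,\dots,V_k\subset X$ so that $\frac{1}{k}\sum_{i=1}^k\delta_{y_i}\in\mathcal{U}$ whenever $y_i\in U_i$, and $\frac{1}{k}\sum_{i=1}^k\delta_{z_i}\in\mathcal{V}$ whenever $z_i\in V_i$. By Lemma \ref{3.1}(i), $(X^k,f_{0,\infty}^k)$ is mixing, so there is $N_0$ such that for all $n\geq N_0$ the open product sets $U_1\times\cdots\times U_k$ and $V_1\times\cdots\times V_k$ have a point of the first mapped, by $f_0^n$ applied in each coordinate, into the second; that is, there are $\tilde y_i\in U_i$ with $f_0^n(\tilde y_i)\in V_i$ for $1\leq i\leq k$. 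Then $\mu:=\frac{1}{k}\sum_{i=1}^k\delta_{\tilde y_i}\in\mathcal{U}$ and $\hat f_0^n(\mu)=\frac{1}{k}\sum_{i=1}^k\delta_{f_0^n(\tilde y_i)}\in\mathcal{V}$ by Lemma \ref{basic result}(iii); hence $\hat f_0^n(\mathcal{U})\cap\mathcal{V}\neq\emptyset$ for all $n\geq N_0$, and $(\mathcal{M}(X),\hat f_{0,\infty})$ is mixing.

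Part (ii) reduces to transitivity of suitable products against an arbitrary transitive system $(Z,h_{0,\infty})$. For ``$\Leftarrow$'': given transitive $(Z,h_{0,\infty})$ and nonempty open $U\times W,\ V\times W'$ in $X\times Z$, form $\mathcal{U},\mathcal{V}$ from $U,V$ as above; since $(\mathcal{M}(X),\hat f_{0,\infty})$ is mild mixing, $(\mathcal{M}(X)\times Z,\hat f_{0,\infty}\times h_{0,\infty})$ is transitive, so some return time $n$ sends a point of $\mathcal{U}\times W$ into $\mathcal{V}\times W'$; projecting as in (i) gives $f_0^n(U)\cap V\neq\emptyset$ and $h_0^n(W)\cap W'\neq\emptyset$, whence $(X\times Z,f_{0,\infty}\times h_{0,\infty})$ is transitive and $(X,f_{0,\infty})$ is mild mixing. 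For ``$\Rightarrow$'': given transitive $(Z,h_{0,\infty})$ and nonempty open $\mathcal{U}\times W,\ \mathcal{V}\times W'$ in $\mathcal{M}(X)\times Z$, apply Lemma \ref{3.2} to $\{\mathcal{U},\mathcal{V}\}$ to get $k$ and $U_i,V_i\subset X$ as before; by Lemma \ref{3.1}(ii), $(X^k,f_{0,\infty}^k)$ is mild mixing, so $(X^k\times Z,f_{0,\infty}^k\times h_{0,\infty})$ is transitive. A return time $n$ for $(U_1\times\cdots\times U_k)\times W$ into $(V_1\times\cdots\times V_k)\times W'$ produces $\tilde y_i\in U_i$ with $f_0^n(\tilde y_i)\in V_i$ and $z\in W$ with $h_0^n(z)\in W'$; then $\mu=\frac{1}{k}\sum_i\delta_{\tilde y_i}$ and $\hat f_0^n(\mu)=\frac{1}{k}\sum_i\delta_{f_0^n(\tilde y_i)}$ show $(\hat f_0^n\times h_0^n)(\mathcal{U}\times W)\cap(\mathcal{V}\times W')\neq\emptyset$, so $(\mathcal{M}(X),\hat f_{0,\infty})$ is mild mixing.

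The main obstacle is the base-to-induced direction, since open sets of $\mathcal{M}(X)$ are not products of open sets of $X$ and a return statement in $\mathcal{M}(X)$ cannot be read off directly on $X$. Lemma \ref{3.2} is precisely the device that replaces an arbitrary open set of $\mathcal{M}(X)$ by a neighborhood of a uniformly weighted finitely supported measure, whose support points can be chosen in genuine open subsets of $X$; once this is done, passing to the $k$-fold product $X^k$ (where mixing and mild mixing persist by Lemma \ref{3.1}) and using the affinity of $\hat f_0^n$ (Lemma \ref{basic result}(iii)) finishes the proof. I expect no essential difficulty beyond bookkeeping; one should just note that in (ii) only the existence of a return time — not a uniform bound — is needed, consistent with mild mixing being defined through transitivity of products.
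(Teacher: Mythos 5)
Your proposal is correct and follows essentially the same route as the paper: the induced-to-base directions adapt the half-measure open sets $\{\mu:\mu(U)>1/2\}$ from Proposition \ref{transitive}, and the base-to-induced directions use Lemma \ref{3.2} to reduce to finitely supported measures, Lemma \ref{3.1} to pass to the product $X^k$ (or $X^k\times Z$), and Lemma \ref{basic result}(iii) to push the resulting atomic measure forward. The only cosmetic difference is that you work directly with basic product open sets in (ii), whereas the paper first selects $\mathcal{U}_i\times V_i$ inside arbitrary open subsets of $\mathcal{M}(X)\times Y$; this changes nothing.
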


\begin{proof}
 (i) Suppose that $(X,f_{0,\infty})$ is  mixing. Let $\mathcal{U}_{1}$ and
$\mathcal{U}_{2}$ be two nonempty open subsets  of $\mathcal{M}(X)$. Lemma \ref{3.2} ensures that there exists $n\geq 1$ such that
for any $1\leq j\leq 2$, there exist $n$ nonempty open subsets $U_{1}^{j},\cdots,U_{n}^{j}$ of $X$ satisfying that $\frac{1}{n}\sum\limits_{i=1}^{n}\delta_{y_i^{j}}\in \mathcal{U}_{j}$
for  $y_{i}^{j}\in U_{i}^{j}$ and $1\leq i\leq n$.
By Lemma \ref{3.1}, $(X^{n},f_{0,\infty}^{n})$ is  mixing. Thus, there exists $N_0\geq 1$
such that for any $N\geq N_0$, we can choose $z_i\in U_{i}^{1}$ satisfying that $f_{0}^{N}(z_i)\in U_{i}^{2}$
for  $1\leq i\leq n$. Define $\mu\triangleq\frac{1}{n}\sum\limits_{i=1}^{n}\delta_{z_i}$. Then
$\mu\in\mathcal{U}_{1}$ and $\hat{f}_{0}^{N}(\mu)=\frac{1}{n}\sum\limits_{i=1}^{n}\delta_{f_{0}^{N}(z_i)}\in\mathcal{U}_{2}$, which yields $\hat{f}_{0}^{N}(\mathcal{U}_{1})\cap\mathcal{U}_{2}\neq\emptyset$ for  $N\geq N_0$.
Therefore, $(\mathcal{M}(X),\hat f_{0,\infty})$ is  mixing. The converse of (i)
is proved by a similar approach to that of Proposition \ref{transitive}.

(ii) Let $(Y,g_{0,\infty})$ be a given transitive system. Suppose that $(X,f_{0,\infty})$ is mild mixing.  Choose two nonempty open subsets  $\mathcal{O}_{1}$ and $\mathcal{O}_{2}$  of
$\mathcal{M}(X)\times Y$. Then there exist nonempty open subsets $\mathcal{U}_{i}\subset\mathcal{M}(X)$ and
$V_i\subset Y$ such that $\mathcal{U}_{i}\times V_i\subset\mathcal{O}_{i}$, $i=1,2$. By Lemma \ref{3.2}, there exists
$n\geq 1$ such that for  $1\leq j\leq 2$, there exist $n$ nonempty open subsets $U_{1}^{j},\cdots,U_{n}^{j}$
of $X$ satisfying that  $\frac{1}{n}\sum\limits_{i=1}^{n}\delta_{y_i^{j}}\in
\mathcal{U}_{j}$  for  $y_{i}^{j}\in U_{i}^{j}$ and $1\leq i\leq n$. By Lemma \ref{3.1},
 $(X^{n}\times Y,f_{0,\infty}^{n}\times g_{0,\infty})$ is transitive. Thus, there exists $N_0\geq 1$ such that
$
g_{0}^{N_0}(V_1)\cap V_2\neq\emptyset,
$
and  there exists $z_i\in U_{i}^{1}$ such that $f_{0}^{N_0}(z_i)\in U_{i}^{2}$ for  $1\leq i\leq n$.
 Let $\mu\triangleq\frac{1}{n}\sum\limits_{i=1}^{n}\delta_{z_i}$. Then $\mu\in\mathcal{U}_{1}$ and $\hat{f}_{0}^{N_0}(\mu)=\frac{1}{n}\sum\limits_{i=1}^{n}\delta_{f_{0}^{N}(z_i)}\in\mathcal{U}_{2}$. Thus,
$
(\hat{f}_{0}^{N_0}\times  g_{0}^{N_0})(\mathcal{O}_{1})\cap\mathcal{O}_{2}\neq\emptyset.
$
This proves that $(\mathcal{M}(X),\hat f_{0,\infty})$ is mild mixing.

Suppose that $(\mathcal{M}(X),\hat f_{0,\infty})$ is mild mixing.
It suffices to show that there exists  $N_0\geq 1$ such that
$ (f_{0}^{N_0}\times g_{0}^{N_0})(U_{1}\times V_1)\cap (U_{2}\times V_2)\neq\emptyset$ for
any nonempty open subsets $U_{1},U_{2}\subset X$
and $V_1,V_2\subset Y$. Define
$
\mathcal{U}_{i}\triangleq\{\mu\in\mathcal{M}(X): \mu(U_i)>1/2\}$, $ i=1,2.
$
Since $(\mathcal{M}(X),\hat f_{0,\infty})$ is mild mixing, there exists $N_0\geq 1$ such that
$
 g_{0}^{N_0}(V_1)\cap  V_2\neq\emptyset
$
and there exists $\mu\in \mathcal{U}_{1}$ such that $\hat{f}_{0}^{N_0}(\mu)\in \mathcal{U}_{2}$. Since $\mu(U_1)>1/2$ and $\hat{f}_{0}^{N_0}(\mu)(U_2)=\mu\big(f_{0}^{-N_0}(U_2)\big)>1/2$,
we have $ (f_{0}^{N_0}\times g_{0}^{N_0})(U_{1}\times V_1)\cap (U_{2}\times V_2)\neq\emptyset$.
\end{proof}

\begin{theorem}\label{exactness}
$(X,f_{0,\infty})$ is  exact if and only if $(\mathcal{M}(X),\hat f_{0,\infty})$ is  exact.
\end{theorem}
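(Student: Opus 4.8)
The plan is to prove both implications, with the converse being routine and the forward implication carrying the weight. For the converse, suppose $(\mathcal{M}(X),\hat f_{0,\infty})$ is exact with constant $N_1$. Given a nonempty open $U\subset X$, set $\mathcal{U}:=\{\mu\in\mathcal{M}(X):\mu(U)>1/2\}$, which is nonempty (it contains $\delta_x$ for $x\in U$) and open. For each $n\ge N_1$ we have $\hat f_0^n(\mathcal{U})=\mathcal{M}(X)$, so for every $y\in X$ there is $\mu\in\mathcal{U}$ with $\hat f_0^n(\mu)=\delta_y$; evaluating both sides on the Borel set $\{y\}$ gives $\mu(f_0^{-n}(\{y\}))=1$, and combined with $\mu(U)>1/2$ this forces $U\cap f_0^{-n}(\{y\})\ne\emptyset$, i.e. $y\in f_0^n(U)$. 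Hence $f_0^n(U)=X$ for all $n\ge N_1$, so $(X,f_{0,\infty})$ is exact. This is the same mechanism as in Proposition \ref{transitive}.

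For the forward implication, suppose $(X,f_{0,\infty})$ is exact with constant $N_1$; I claim the same $N_1$ witnesses exactness of $(\mathcal{M}(X),\hat f_{0,\infty})$. Fix a nonempty open $\mathcal{U}\subset\mathcal{M}(X)$. By Lemma \ref{basic result}(ii) choose $\mu_0=\frac1k\sum_{i=1}^k\delta_{x_i}\in\mathcal{U}$, and pick $\delta\in(0,1)$ with $B_{\mathcal{P}_d}(\mu_0,2\delta)\subset\mathcal{U}$. Put $W_i:=\bar B_d(x_i,\delta)$. Since $B_d(x_i,\delta)$ is a nonempty open set and $n\ge N_1$, exactness gives $f_0^n(W_i)\supset f_0^n(B_d(x_i,\delta))=X$, hence $f_0^n(W_i)=X$. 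Moreover, if $\lambda_1,\dots,\lambda_k\in\mathcal{M}(X)$ satisfy $\lambda_i(W_i)=1$ and $\mu:=\frac1k\sum_{i=1}^k\lambda_i$, then $\mathcal{P}_d(\mu,\mu_0)\le\delta$: for any Borel $A$ and any $i$ with $x_i\in A$ one has $W_i\subset A^{\delta'}$ for every $\delta'>\delta$, so $\mu_0(A)\le\mu(A^{\delta'})$, and \eqref{measure-metric-def} yields the bound. In particular every such $\mu$ lies in $\mathcal{U}$.

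It then remains to establish a lifting lemma: if $g:X\to X$ is continuous and $W\subset X$ is closed with $g(W)=X$, then for every $\nu\in\mathcal{M}(X)$ there is $\lambda\in\mathcal{M}(X)$ with $\lambda(W)=1$ and $\hat g(\lambda)=\nu$. To see this, approximate $\nu$ by measures $\nu_m=\frac1{N_m}\sum_{j=1}^{N_m}\delta_{z_j^{(m)}}\in\mathcal{M}_\infty(X)$ (dense by Lemma \ref{basic result}(ii)); for each $j$ pick $w_j^{(m)}\in W$ with $g(w_j^{(m)})=z_j^{(m)}$ and set $\lambda_m:=\frac1{N_m}\sum_{j=1}^{N_m}\delta_{w_j^{(m)}}$, so $\lambda_m(W)=1$ and $\hat g(\lambda_m)=\nu_m$ by Lemma \ref{basic result}(iii). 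Since $W$ is closed, $\mathcal{M}(W)$ is a closed, hence compact, subset of $\mathcal{M}(X)$, so a subsequence of $\{\lambda_m\}$ converges to some $\lambda$ with $\lambda(W)=1$, and continuity of $\hat g$ gives $\hat g(\lambda)=\lim\hat g(\lambda_m)=\lim\nu_m=\nu$ (alternatively one may take $\lambda=\hat s(\nu)$ for a Borel measurable selection $s:X\to W$ of $g$). Now, given arbitrary $\nu\in\mathcal{M}(X)$ and $n\ge N_1$, apply this with $g=f_0^n$ and $W=W_i$ to get $\lambda_i\in\mathcal{M}(W_i)$ with $\hat f_0^n(\lambda_i)=\nu$, and put $\mu:=\frac1k\sum_{i=1}^k\lambda_i\in\mathcal{U}$. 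Then $\hat f_0^n(\mu)=\frac1k\sum_{i=1}^k\hat f_0^n(\lambda_i)=\nu$ by Lemma \ref{basic result}(iii), so $\hat f_0^n(\mathcal{U})=\mathcal{M}(X)$ for all $n\ge N_1$, and $(\mathcal{M}(X),\hat f_{0,\infty})$ is exact.

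The main obstacle is the lifting lemma: realizing an arbitrary probability measure as the $\hat g$-image of a measure concentrated in a prescribed small set. The compactness argument sketched above avoids an explicit measurable section, but one must be careful that $W$ is closed (so that $\mathcal{M}(W)$ is compact) and that the resulting perturbation of $\mu_0$ stays inside $\mathcal{U}$ — which is precisely why the closed balls $W_i=\bar B_d(x_i,\delta)$ and the slack between $\delta$ and $2\delta$ are built in.
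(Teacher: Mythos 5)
Your proof is correct. The forward implication is essentially the paper's argument in a different packaging: the paper lifts each atomic measure $\nu=\frac1n\sum_j\delta_{y_j}$ to an atomic measure supported in $\bigcup_i B_d(x_i,\epsilon)$, concludes $\mathcal{M}_\infty(X)\subset\hat f_0^N\bigl(B_{\mathcal{P}_d}(\mu_0,\epsilon)\bigr)$, and then passes to the limit once, via $\mathcal{M}(X)=\overline{\mathcal{M}_\infty(X)}\subset\overline{\hat f_0^N(B)}\subset\hat f_0^N(\bar B)\subset\hat f_0^N(\mathcal{U})$ using compactness of the closed ball; you instead perform the same density-plus-compactness step inside your lifting lemma, one closed ball $W_i=\bar B_d(x_i,\delta)$ at a time, and then average the lifts. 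Both hinge on the same two facts (density of $\mathcal{M}_\infty(X)$ and compactness of the relevant closed set of measures), so this direction buys nothing new, though isolating the lifting lemma is arguably cleaner and reusable. The converse is where you genuinely diverge: the paper argues by contradiction, choosing $y\notin f_0^{n_0}(\bar B_d(x,r))$ and estimating $\mathcal{P}_d\bigl(\delta_y,\hat f_0^{n_0}(\bar B_{\mathcal{P}_d}(\delta_x,r))\bigr)\geq\eta$ to contradict surjectivity onto $\mathcal{M}(X)$, whereas you give a direct argument: for each $y\in X$ solve $\hat f_0^n(\mu)=\delta_y$ with $\mu(U)>1/2$, and the inclusion-exclusion bound $\mu\bigl(U\cap f_0^{-n}(\{y\})\bigr)\geq\mu(U)+\mu(f_0^{-n}(\{y\}))-1>0$ immediately yields $y\in f_0^n(U)$. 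Your version is shorter, avoids the contrapositive, and keeps the same uniform constant $N_1$; it is a worthwhile simplification of that half of the proof.
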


\begin{proof}
Suppose that $(X,f_{0,\infty})$ is  exact. Fix any nonempty open subset $\mathcal{U}$ of $\mathcal{M}(X)$.
Lemma \ref{basic result} (ii) shows that there exists $m\geq 1$ such that $\mu_0\triangleq\frac{1}{m}\sum\limits_{i=1}^{m}\delta_{x_i}\in\mathcal{U}$ for some $x_1,\cdots,x_m\in X$. Choose
$\epsilon>0$ such that $\bar{B}_{\mathcal{P}_d}(\mu_0,\epsilon)\subset\mathcal{U}$.
Since $(X,f_{0,\infty})$ is  exact, there exists $N_0\geq 1$ such that for  $1\leq i\leq m$,
\begin{align}\label{exact}
f_{0}^{N}\big(B_{d}(x_i,\epsilon)\big)=X,\;N\geq N_0.
\end{align}
Fix $N\geq N_0$. For any $\nu\in\mathcal{M}_\infty(X)$, there exist $n\geq1$ and $y_1,\cdots,y_n\in X$ such that $\nu=\frac{1}{n}\sum\limits_{j=1}^{n}\delta_{y_j}$. It follows from (\ref{exact}) that for  $1\leq i\leq m$ and $1\leq j\leq n$, there exists
$z_{j}^{i}\in B_{d}(x_i,\epsilon)$ such that $f_{0}^{N}(z_{j}^{i})=y_j$. Define
$\mu\triangleq\frac{1}{mn}\sum\limits_{i=1}^{m}\sum\limits_{j=1}^{n}\delta_{z_{j}^i}$. By Lemma \ref{basic result} (iii), we have
\begin{align}\label{ex}
\hat{f}_{0}^{N}(\mu)
=\frac{1}{mn}\sum_{i=1}^{m}\sum_{j=1}^{n}\delta_{f_{0}^{N}(z_{j}^i)}
=\frac{1}{n}\sum_{j=1}^{n}\delta_{y_j}=\nu.
\end{align}
Since
$z_{j}^{i}\in B_{d}(x_i,\epsilon)$ for  $1\leq i\leq m$ and $1\leq j\leq n$, we have
\begin{align*}
\mu(A^{\epsilon})+\epsilon=\frac{1}{mn}\sum_{i=1}^{m}\sum_{j=1}^{n}\delta_{z_{j}^i}(A^{\epsilon})+\epsilon\geq \frac{1}{m}\sum_{i=1}^{m}\delta_{x_i}(A)=\mu_0(A)
\end{align*}
for  $A\in \mathcal{B}(X)$.
Thus, $\mathcal{P}_{d}\left(\mu,\mu_0\right)<\epsilon$. This, along with (\ref{ex}), gives $\nu\in\hat{f}_{0}^{N}\big(B_{\mathcal{P}_{d}}(\mu_0,\epsilon)\big)$. Thus,
\begin{align*}
\mathcal{M}_\infty(X)\subset\hat{f}_{0}^{N}\big(B_{\mathcal{P}_{d}}
(\mu_0,\epsilon)\big).
\end{align*}
Lemma \ref{basic result} (ii) implies that
\begin{align*}
\mathcal{M}(X)=\overline{\mathcal{M}_\infty(X)}\subset\overline{\hat{f}_{0}^{N}
\big(B_{\mathcal{P}_{d}}(\mu_0,\epsilon)\big)}=\hat{f}_{0}^{N}\big(\bar{B}_{\mathcal{P}_{d}}
(\mu_0,\epsilon)\big)\subset\hat{f}_{0}^{N}(\mathcal{U}).
\end{align*}
Consequently, $\hat{f}_{0}^{N}(\mathcal{U})=\mathcal{M}(X)$ for $N\geq N_0$.
Hence, $(\mathcal{M}(X),\hat f_{0,\infty})$ is  exact.

Let $(\mathcal{M}(X),\hat f_{0,\infty})$ be  exact. Suppose that $(X,f_{0,\infty})$
is not  exact. Then there exists a nonempty open subset $U$ of $X$ such that for any $N\geq 1$,
there exists $n_0\geq N$ such that $f_{0}^{n_0}(U)\subsetneqq X$. Let $x\in U$ and choose
$0<r<1/2$ such that $\bar{B}_d(x,r)\subset U$. Clearly, $X\setminus f_{0}^{n_0}\big(\bar{B}_d(x,r)\big)\neq\emptyset$.
Let $y\in X\setminus f_{0}^{n_0}\big(\bar{B}_d(x,r)\big)$ and define $\eta\triangleq\min\left\{1/2, d\big(y, f_{0}^{n_0}(\bar{B}_d(x,r))\big)\right\}>0.$
Lemma \ref{basic result} (ii) ensures that $\mathcal{D}\triangleq B_{\mathcal{P}_{d}}(\delta_x,r)\cap\mathcal{M}_{\infty}(X)\neq\emptyset$.
Choose $\mu\triangleq\frac{1}{k}\sum\limits_{i=1}^{k}\delta_{y_i}\in\mathcal{D}$
  for some $k\geq 1$ and $y_1,\cdots,y_k\in X$.
Then
\begin{align*}
1=\delta_{x}(\{x\})\leq\mu(B_d(x,r))+r={1\over k}\sum\limits_{i=1}^k\delta_{y_i}(B_d(x,r))+r={1\over k}\sharp\{1\leq i\leq k:y_i\in B_d(x,r)\}+r.
\end{align*}
Since $y_i\in B_d(x,r)$ implies $f_0^{n_0}(y_i)\notin B_d(y,\eta)$, we have
\begin{align*}
\hat{f}_{0}^{n_0}(\mu)(B_d(y,{\eta}))+\eta
\leq\frac{1}{k}\sharp\{1\leq i\leq k: y_i\notin B_d(x,{r})\}+\eta\leq r+\eta<\delta_{y}(\{y\}).
\end{align*}
This proves $\mathcal{P}_{d}\big(\delta_{y},\hat{f}_{0}^{n_0}(\mu)\big)\geq \eta$. Hence,
$\mathcal{P}_{d}(\delta_{y},\hat{f}_{0}^{n_0}(\mathcal{D}))\geq\eta$.
Since $\overline{\mathcal{M}_\infty(X)}=\mathcal{M}(X)$ and $B_{\mathcal{P}_d}(\delta_x,r)$ is open, we have
$\hat f_0^{n_0}(\bar{B}_{\mathcal{P}_d}(\delta_x,r))=\overline{\hat f_0^{n_0}(\mathcal{D})}$ and  thus
\begin{align*}
\mathcal{P}_{d}\left(\delta_{y},\hat{f}_{0}^{n_0}\big(\bar{B}_{\mathcal{P}_{d}}(\delta_x,r)\big)\right)
=\mathcal{P}_{d}\left(\delta_{y},\overline{\hat{f}_{0}^{n_0}(\mathcal{D})}\right)
\geq\eta.
\end{align*}
This implies that $\hat{f}_{0}^{n_0}\big(B_{\mathcal{P}_{d}}(\delta_x,r)\big)\neq \mathcal{M}(X)$, which is a contradiction.
Therefore, $(X,f_{0,\infty})$ is  exact.
\end{proof}

\section{Chain mixing, chain transitivity and shadowing}

\subsection{Chain mixing and chain transitivity}
Let $\delta>0$. Recall that a $\delta$-pseudo orbit of $(X,f_{0,\infty})$ is a finite or infinite sequence $\{x_0,x_1,\cdots\}$ such
that $d(f_n(x_n),x_{n+1})<\delta$ for all $n=0,1\cdots$. A finite $\delta$-pseudo orbit $\{x_0,x_1,\cdots,x_k\}$ is also called a
$\delta$-chain from $x_0$ to $x_k$ with length $k$. $(X,f_{0,\infty})$ is  chain mixing if, for any $\varepsilon>0$ and
any $x,y\in X$, there exists $N\geq 1$ such that for any $k\geq N$, there exists an $\varepsilon$-chain from $x$
to $y$ with length $k$; it is chain transitive if, for any $x,y\in X$ and any $\varepsilon>0$, there exists an
$\varepsilon$-chain from $x$ to $y$; it is chain weakly mixing of all orders if $(X^{n},f_{0,\infty}^{n})$ is chain transitive
for all $n\geq 1$; it is chain exact if, for any $\varepsilon>0$ and any nonempty open subset $U$ of $X$, there exists
$N\geq 1$ such that for any $x\in X$, there exist $y\in U$ and an $\varepsilon$-chain from $y$ to $x$ with length $N$.

The first lemma reveals basic relations of these chain properties.

\begin{lemma}\label{chain}
{\rm(i)} If $(X,f_{0,\infty})$ is chain mixing, then it is chain weakly mixing of all orders.

{\rm(ii)} If $(X,f_{0,\infty})$ is weakly mixing of all orders, then it is chain exact.

{\rm(iii)} If $(X,f_{0,\infty})$ is chain exact, then it is chain transitive.
\end{lemma}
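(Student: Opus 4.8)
The plan is to prove the three implications independently, each by an explicit construction of the required $\varepsilon$-chains. For (i), the crucial feature of chain mixing is that it produces $\varepsilon$-chains of \emph{every sufficiently large prescribed length}, which lets us synchronize the coordinates. Given $\mathbf{x}=(x^{1},\dots,x^{n})$, $\mathbf{y}=(y^{1},\dots,y^{n})\in X^{n}$ and $\varepsilon>0$, I apply chain mixing in each coordinate $i$ to obtain $N_{i}\geq 1$, set $k=\max_{1\leq i\leq n}N_{i}$, and choose for each $i$ an $\varepsilon$-chain $\{x^{i}=z^{i}_{0},z^{i}_{1},\dots,z^{i}_{k}=y^{i}\}$ of the common length $k$ (possible since $k\geq N_{i}$). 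Then $\mathbf{z}_{j}:=(z^{1}_{j},\dots,z^{n}_{j})$, $0\leq j\leq k$, is an $\varepsilon$-chain from $\mathbf{x}$ to $\mathbf{y}$ in $(X^{n},f_{0,\infty}^{n})$, because $d_{n}$ is the max metric: the $j$-th map of $f_{0,\infty}^{n}$ sends $\mathbf{z}_{j}$ to $(f_{j}(z^{1}_{j}),\dots,f_{j}(z^{n}_{j}))$, whose $d_{n}$-distance to $\mathbf{z}_{j+1}$ equals $\max_{i}d(f_{j}(z^{i}_{j}),z^{i}_{j+1})<\varepsilon$. Since $n\geq 1$ is arbitrary, $(X,f_{0,\infty})$ is chain weakly mixing of all orders.

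For (ii), fix $\varepsilon>0$ and a nonempty open $U\subset X$. Using compactness, cover $X$ by finitely many balls $B_{1},\dots,B_{m}$ of radius $\varepsilon/2$; if $m=1$ the diameter of $X$ is less than $\varepsilon$ and chain exactness is immediate, so assume $m\geq 2$. Since $(X,f_{0,\infty})$ is weakly mixing of order $m$, $\bigcap_{i=1}^{m}N(U,B_{i})\neq\emptyset$; pick $N$ in this intersection, so there are $y_{i}\in U$ with $f_{0}^{N}(y_{i})\in B_{i}$ for every $i$. Given any $x\in X$, choose $i_{0}$ with $x\in B_{i_{0}}$; then $d(f_{0}^{N}(y_{i_{0}}),x)<\varepsilon$, and the orbit segment $\{y_{i_{0}},f_{0}^{1}(y_{i_{0}}),\dots,f_{0}^{N-1}(y_{i_{0}}),x\}$ is an $\varepsilon$-chain of length $N$ from $y_{i_{0}}\in U$ to $x$: its first $N-1$ transitions have error $0$ and the last has error $d(f_{0}^{N}(y_{i_{0}}),x)<\varepsilon$. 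As $N$ depends only on $\varepsilon$ and $U$, this shows $(X,f_{0,\infty})$ is chain exact.

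For (iii), assume chain exactness and fix $x,y\in X$, $\varepsilon>0$. Chain exactness only produces a chain emanating from \emph{some} point of a chosen open set, so I bridge from $x$ by continuity of $f_{0}$: choose $\delta\in(0,\varepsilon)$ with $d(f_{0}(x),f_{0}(w))<\varepsilon/2$ whenever $d(x,w)<\delta$. Apply chain exactness with parameter $\varepsilon/2$ and open set $U=B_{d}(x,\delta)$ to obtain $N\geq 1$, a point $w\in B_{d}(x,\delta)$, and an $(\varepsilon/2)$-chain $\{w=u_{0},u_{1},\dots,u_{N}=y\}$. Replacing $u_{0}$ by $x$, the sequence $\{x,u_{1},\dots,u_{N}=y\}$ is an $\varepsilon$-chain from $x$ to $y$: $d(f_{0}(x),u_{1})\leq d(f_{0}(x),f_{0}(w))+d(f_{0}(w),u_{1})<\varepsilon/2+\varepsilon/2=\varepsilon$, while the transitions at $u_{1},\dots,u_{N-1}$ already have error $<\varepsilon/2$. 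Hence $(X,f_{0,\infty})$ is chain transitive.

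I expect (ii) to be the main obstacle, since it is the only place where a genuine dynamical hypothesis must be converted into a chain-recurrence statement; the point is to notice that weak mixing of the \emph{order} $m$ determined by a finite $\varepsilon/2$-net yields a single return time $N$ hitting every net-ball simultaneously, after which orbit segments themselves serve as $\varepsilon$-chains. In (iii) the only delicate point is remembering that continuity of $f_{0}$ is needed to pass from $x$ to the nearby starting point delivered by chain exactness, and (i) is essentially bookkeeping with the max metric $d_{n}$.
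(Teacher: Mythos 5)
Your argument is correct in all three parts. Parts (i) and (iii) follow essentially the same route as the paper: in (i) you synchronize the coordinatewise chains by exploiting that chain mixing provides chains of every sufficiently large common length $k$ and then use the max metric $d_n$, and in (iii) you use continuity of $f_0$ to splice $x$ onto the front of the chain that chain exactness produces from a nearby point (the paper reaches the same nearby point via a finite $\delta/2$-cover rather than taking $U=B_d(x,\delta)$ directly, but this is cosmetic). Part (ii) is where you genuinely diverge. The paper fixes one $y\in U$, invokes chain transitivity of the product system $(X^{k},f_{0,\infty}^{k})$ (which it obtains from weak mixing of all orders, implicitly passing through the fact that a transitive product system is chain transitive), builds an $\varepsilon/2$-chain in $X^{k}$ from $(y,\dots,y)$ to the tuple of centers of an $\varepsilon/2$-net, and then projects onto the coordinate corresponding to the ball containing the target $x$. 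You instead apply weak mixing of order $m$ directly to the $m$ pairs $(U,B_i)$ to extract a single hitting time $N$ with $f_0^N(U)\cap B_i\neq\emptyset$ for every ball of the net, and then use genuine orbit segments $\{y_{i_0},f_0^1(y_{i_0}),\dots,f_0^{N-1}(y_{i_0}),x\}$ as the required chains. Your version has the advantage of avoiding the unproven (though true) step "transitive product $\Rightarrow$ chain transitive product" that the paper glosses over, and it keeps all intermediate transition errors equal to zero; the paper's version has the mild advantage of producing all chains from the single prescribed point $y\in U$, though the definition of chain exactness does not require this. Both exploit the same underlying idea: a finite $\varepsilon/2$-net plus a simultaneity statement for hitting all its members.
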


\begin{proof}
(i) Let $n\geq 1$, $\varepsilon>0$ and $x=(x_1,\cdots, x_n),y=(y_1,\cdots, y_n)\in X^{n}$.
Since $(X,f_{0,\infty})$ is chain mixing, there exists $N\geq 1$ such that for any $k\geq N$ and $1\leq i\leq n$, there exists an ${\varepsilon}$-chain $\{x_i=x_{i}^{0},x_{i}^{1},\cdots,x_{i}^{k}=y_i\}$ with length $k$.
Then $\{x=(x_{1}^{0},\cdots,x_{n}^{0}),(x_{1}^{1},\cdots,x_{n}^{1}),\cdots,$ $(x_{1}^{k},\cdots,x_{n}^{k})=y\}$ is an $\varepsilon$-chain of
$(X^{n},f_{0,\infty}^{n})$ with length $k$. In fact, for any $0\leq j\leq k-1$,
\begin{align*}
d_{n}\big((f_{j}(x_{1}^{j}),\cdots,f_{j}(x_{n}^{j})),(x_{1}^{j+1},\cdots,x_{n}^{j+1})\big)
=\max_{1\leq i\leq n}d(f_{j}(x_{i}^{j}),x_{i}^{j+1})<\varepsilon.
\end{align*}
Hence, $(X,f_{0,\infty})$ is chain weakly mixing of order $n$. Therefore,
it is chain weakly mixing of all orders.

(ii) Let $\varepsilon>0$ and $U$ be a nonempty open subset of $X$. Since $X$ is compact, there exist $x_1,\cdots,x_k\in X$
 such that $X=\bigcup_{i=1}^{k}B_d(x_i,{\varepsilon\over 2})$. Fix $y\in U$. Since $(X^k, f_{0,\infty}^k)$
is chain transitive, there exists ${\varepsilon\over2}$-chain $\{(y,\cdots,y)=(z_1^0,\cdots, z_k^0),(z_1^1,\cdots, z_k^1),\cdots,
(z_1^l,\cdots,z_k^l)=(x_1,\cdots,x_k)\}$ with length $l$. For any $x\in X$, there exists $1\leq j\leq k$
such that $x\in B_d(x_j,{\varepsilon\over 2})$. Then
\begin{align*}
d(f_{l-1}(z_j^{l-1}),x)\leq d(f_{l-1}(z_j^{l-1}),x_j)+d(x_j,x)<\varepsilon.
\end{align*}
Thus, $\{y=z_j^0,z_j^1,\cdots,z_j^{l-1},x\}$ is an $\varepsilon$-chain from $y$ to $x$ with length $l$.

(iii) Fix $x,y\in X$ and $\varepsilon>0$. There exists $\delta>0$ such that $d(f_0(z_1),f_0(z_2))<{\varepsilon\over 2}$
for any $z_1,z_2\in X$ with $d(z_1,z_2)<\delta$. Since $X$ is compact, there exist $y_1,\cdots,y_k\in X$ such that
$X=\bigcup_{i=1}^k B_d(y_i,{\delta/2})$. Then $x\in B_d(y_{i_0},{\delta/2})$ for some $1\leq i_0\leq k$.
Since $(X,f_{0,\infty})$ is chain exact, there exist $N_0\geq 1$, $\hat x\in B_d(y_{i_0},{\delta/2})$ and
$\{a_i\}_{i=1}^{N_0-1}\subset X$ such that $\{\hat x=a_0,a_1,\cdots, a_{N_0}=y\}$ is an ${\varepsilon\over 2}$-chain. Since
$d(x,\hat x)\leq d(x,y_{i_0})+d(y_{i_0},\hat x)<\delta$, we have
$d(f_0( x),f_0(\hat x))<{\varepsilon/2}$. Then
\begin{align*}
d(f_0(x),a_1)\leq d(f_0(x), f_0(\hat x))+d(f_0(\hat x),a_1)<\varepsilon.
\end{align*}
Thus, $\{x,a_1,\cdots, a_{N_0}=y\}$ is an $\varepsilon$-chain from $x$ to $y$.
\end{proof}

For autonomous systems, it was proved in Theorem 13 of \cite{Bernardes} that  $(\mathcal{M}(X),\hat f)$ is chain mixing if $f$ is a homeomorphism. We will show that this result is true only if $f$ is  surjective in Theorem \ref{chain mixing}, and furthermore, it is generalized to non-autonomous systems. The key observation is the following lemma.

\begin{lemma}\label{surjective}
Let $f: X\to X$ be a map. Then
 $f$ is surjective if and only if $\hat f$ is surjective.
\end{lemma}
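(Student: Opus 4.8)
The plan is to prove both implications directly from the definition $\hat f(\mu)(A)=\mu(f^{-1}(A))$, using Dirac measures in one direction and a measure-theoretic pushforward argument in the other. First I would show that surjectivity of $f$ implies surjectivity of $\hat f$: given $\nu\in\mathcal{M}(X)$, I want to produce $\mu\in\mathcal{M}(X)$ with $\hat f(\mu)=\nu$, i.e. $\mu(f^{-1}(A))=\nu(A)$ for all $A\in\mathcal{B}(X)$. The natural candidate is a measure supported on a ``section'' of $f$. Since $f$ is a continuous surjection between compact metric spaces, one can invoke a measurable selection theorem (e.g. the Kuratowski--Ryll-Nardzewski selection theorem, or simply that a continuous surjection of compact metric spaces admits a Borel right inverse $g$ with $f\circ g=\mathrm{id}_X$). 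Then setting $\mu:=g_*\nu$, the pushforward of $\nu$ under $g$, gives $\hat f(\mu)(A)=\mu(f^{-1}(A))=\nu(g^{-1}(f^{-1}(A)))=\nu((f\circ g)^{-1}(A))=\nu(A)$, so $\hat f(\mu)=\nu$. Alternatively, to avoid selection theorems, one could first handle $\nu\in\mathcal{M}_\infty(X)$ (finite averages of Dirac masses): for $\nu=\frac1n\sum\delta_{y_i}$ pick $x_i\in f^{-1}(y_i)$ by surjectivity and take $\mu=\frac1n\sum\delta_{x_i}$, so that $\hat f(\mu)=\frac1n\sum\delta_{f(x_i)}=\nu$ by Lemma \ref{basic result}(iii); then use that $\mathcal{M}_\infty(X)$ is dense (Lemma \ref{basic result}(ii)), that $\hat f$ is continuous, and that $\mathcal{M}(X)$ is compact, so $\hat f(\mathcal{M}(X))$ is a compact (hence closed) set containing the dense set $\mathcal{M}_\infty(X)$, whence $\hat f(\mathcal{M}(X))=\mathcal{M}(X)$.

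For the converse, I would prove the contrapositive: if $f$ is not surjective, then $\hat f$ is not surjective. Suppose $f(X)\subsetneq X$; since $X$ is compact and $f$ continuous, $f(X)$ is a proper compact subset, so there is a point $y\in X\setminus f(X)$ and, by compactness, some $r>0$ with $B_d(y,r)\cap f(X)=\emptyset$. I claim the Dirac measure $\delta_y$ is not in the image of $\hat f$. Indeed, if $\hat f(\mu)=\delta_y$ for some $\mu\in\mathcal{M}(X)$, then $\mu(f^{-1}(\{y\}))=\hat f(\mu)(\{y\})=\delta_y(\{y\})=1$; but $f^{-1}(\{y\})=\emptyset$ because $y\notin f(X)$, forcing $\mu(\emptyset)=1$, a contradiction. (One can phrase this even more robustly: $\hat f(\mu)$ is always supported on $\overline{f(X)}=f(X)$, since $\hat f(\mu)(f(X))=\mu(f^{-1}(f(X)))=\mu(X)=1$, so no measure giving positive mass outside $f(X)$, such as $\delta_y$, lies in the image.) This settles $\hat f$ not surjective.

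I expect the main obstacle to be the forward direction when insisting on a clean self-contained argument: producing a preimage measure $\mu$ with $\hat f(\mu)=\nu$ for a general Borel measure $\nu$ genuinely requires either a measurable selection theorem or the density-plus-compactness trick; the selection-theorem route is slick but pulls in outside machinery, while the density route is elementary but relies on $\hat f$ being continuous and $\mathcal{M}(X)$ compact (both already recorded in the excerpt). I would opt for the density argument since both ingredients are available: it keeps the proof within the paper's framework and only uses Lemma \ref{basic result}. The converse direction is routine once one observes that $\hat f(\mu)$ lives on $f(X)$, so no real difficulty there.
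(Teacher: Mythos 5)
Your proposal is correct and follows essentially the same route as the paper: the forward direction lifts finite averages of Dirac measures via surjectivity of $f$ and then combines density of $\mathcal{M}_\infty(X)$ with compactness of $\mathcal{M}(X)$ and continuity of $\hat f$ (the paper extracts a convergent subsequence of the lifts rather than observing that the image is closed, but these are the same argument), while the converse is exactly the paper's Dirac-measure computation $\mu(f^{-1}(\{y\}))=1$, stated in contrapositive form.
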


\begin{proof}
Suppose that $f$ is surjective.
Let $\mu\in \mathcal{M}(X)$. Lemma \ref{basic result} (ii) implies that for any $n\geq 1$, there exist
$k_n\geq 1$ and $\{x_{i,n}\}_{i=1}^{k_n}\subset X$ such that
$\frac{1}{k_{n}}\sum\limits_{i=1}^{k_n}\delta_{x_{i,n}}\in B_{\mathcal{P}_d}(\mu,\frac{1}{2^{n}})$.
Clearly, $\lim\limits_{n\to\infty}\frac{1}{k_{n}}\sum\limits_{i=1}^{k_n}\delta_{x_{i,n}}=\mu$.
Since $f$ is surjective, there exists $y_{i,n}\in X$ such that $f(y_{i,n})=x_{i,n}$ for  $1\leq i\leq k_n$.
Then by Lemma \ref{basic result} (iii),
\begin{align*}
\lim\limits_{n\to\infty}\hat f\left(\frac{1}{k_{n}}\sum\limits_{i=1}^{k_n}\delta_{y_{i,n}}\right)=
\lim\limits_{n\to\infty}\frac{1}{k_{n}}\sum\limits_{i=1}^{k_n}\hat f(\delta_{y_{i,n}})=
\lim\limits_{n\to\infty}\frac{1}{k_{n}}\sum\limits_{i=1}^{k_n}\delta_{x_{i,n}}=\mu.
\end{align*}
Since $\mathcal{M}(X)$ is compact, up to a subsequence, there exists
$\nu\in\mathcal{M}(X)$ such that $\lim\limits_{n\to\infty}\frac{1}{k_{n}}\sum\limits_{i=1}^{k_{n}}\delta_{y_{i,{n}}}$ $=\nu$.
Then
\begin{align*}
\hat f(\nu)=\lim\limits_{n\to\infty}\hat f(\frac{1}{k_{n}}\sum\limits_{i=1}^{k_{n}}\delta_{y_{i,n}})=\mu.
\end{align*}
Therefore, $\hat f$ is surjective.

Suppose that $\hat f$ is surjective. Let $y\in X$. Then there exists $\mu\in\mathcal{M}(X)$ such that $\hat f (\mu)=\delta_y$.
Thus, $\mu(f^{-1}(\{y\}))=\hat f (\mu)(\{y\})=\delta_y(\{y\})=1$, and  $f^{-1}(\{y\})\neq\emptyset$. Hence, $f$ is  surjective.
\end{proof}
\begin{theorem}\label{chain mixing}
Let $f_n: X\to X$ be a surjective map for $n\geq0$. Then $(\mathcal{M}(X),\hat f_{0,\infty})$ is chain mixing.
\end{theorem}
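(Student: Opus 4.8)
The plan is to exploit two features simultaneously: the surjectivity hypothesis (which, via Lemma \ref{surjective}, makes every $\hat f_n$ a surjection of $\mathcal{M}(X)$ onto itself, hence also every composition $\hat f_j\circ\cdots\circ\hat f_i$), and the fact that $\mathcal{M}(X)$ is a convex set on which each $\hat f_n$ acts affinely along convex combinations (Lemma \ref{basic result}(iii)), with the metric estimate of Lemma \ref{basic result}(iv) controlling convex interpolation. Fix $\mu,\nu\in\mathcal{M}(X)$ and $\varepsilon>0$, and choose $N\ge 1$ with $1/N<\varepsilon$; I will then produce, for every $k\ge N$, an $\varepsilon$-chain of length exactly $k$ from $\mu$ to $\nu$, which is precisely what chain mixing demands (note $N$ depends only on $\varepsilon$).

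For a given $k\ge N$, I would run the genuine forward orbit $\mu_j:=\hat f_0^{\,j}(\mu)$, $0\le j\le k$, on one side, and on the other side build a ``backward orbit'' landing on $\nu$: set $\omega_k:=\nu$ and, recursively for $j=k-1,\dots,0$, pick $\omega_j\in\hat f_j^{-1}(\{\omega_{j+1}\})$, which is nonempty because $\hat f_j$ is surjective; thus $\hat f_j(\omega_j)=\omega_{j+1}$ for all $j$. Now define the interpolants $\xi_j:=\tfrac{k-j}{k}\mu_j+\tfrac{j}{k}\omega_j\in\mathcal{M}(X)$, so that $\xi_0=\mu$ and $\xi_k=\nu$. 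By Lemma \ref{basic result}(iii), $\hat f_j(\xi_j)=\tfrac{k-j}{k}\mu_{j+1}+\tfrac{j}{k}\omega_{j+1}$, whereas $\xi_{j+1}=\tfrac{k-j-1}{k}\mu_{j+1}+\tfrac{j+1}{k}\omega_{j+1}$; these are two convex combinations of the same pair $\omega_{j+1},\mu_{j+1}$ whose weights on $\omega_{j+1}$ are $j/k\le(j+1)/k$, so Lemma \ref{basic result}(iv) yields $\mathcal{P}_d(\hat f_j(\xi_j),\xi_{j+1})\le 1/k<\varepsilon$. Hence $\{\xi_0,\dots,\xi_k\}$ is an $\varepsilon$-chain from $\mu$ to $\nu$ of length $k$, and letting $k$ range over $[N,\infty)$ gives chain mixing of $(\mathcal{M}(X),\hat f_{0,\infty})$.

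I do not expect a genuine obstacle here: the entire content is the observation that surjectivity lets one pin the terminal endpoint $\nu$ down exactly through a preimage chain, after which the convexity estimate absorbs all the error at a uniform rate $1/k$. The only points requiring a little care are verifying the endpoints $\xi_0=\mu$ and $\xi_k=\nu$ fall out of the chosen weights, and getting the inequality in Lemma \ref{basic result}(iv) the right way round (here the weight on $\omega_{j+1}$ is nondecreasing in $j$, so the hypothesis $\alpha\le\beta$ is met).
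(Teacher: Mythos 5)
Your proof is correct and follows essentially the same route as the paper's: both use Lemma \ref{surjective} to produce a preimage orbit $\{\omega_j\}$ terminating at $\nu$ (the paper's $\hat f_0^{\,j}(\nu_*)$ is exactly your $\omega_j$) and then convexly interpolate between it and the forward orbit of $\mu$, controlling each jump via Lemma \ref{basic result}(iii) and (iv). The only difference is cosmetic: you spread the interpolation uniformly over all $k$ steps at rate $1/k$, whereas the paper shifts the weight by $\varepsilon/2$ per step during the first $N$ steps and then follows the exact orbit of $\nu_*$; your scheduling is, if anything, slightly cleaner.
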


\begin{proof}
It suffices to show that for any $\varepsilon\in (0,1)$ and any $\mu,\nu\in \mathcal{M}(X)$, there exists $N\geq 1$
such that for any $k\geq N$, there exists an $\varepsilon$-chain $\{\mu=\mu_0,\mu_1,\cdots,\mu_k=\nu\}$ with length $k$.
Choose $N\geq 1$ such that $1\in({(N-1)\varepsilon/2},{N\varepsilon/2}]$. Fix $k\geq N$.
By Lemma \ref{surjective}, $\hat f_0^k$ is surjective and thus there exists $\nu_*\in \mathcal{M}(X)$ such that $\hat f_0^k(\nu_*)=\nu$.
By setting $\mu_1=(1-{\varepsilon/2})\hat f_0(\mu_0)+\varepsilon\hat f_0(\nu_*)/2$, we get $\hat f_0(\mu_0)\in \bar{B}_{\mathcal{P}_d}(\mu_1,{\varepsilon/2})$ by Lemma \ref{basic result} (iv). Define
$\mu_2=(1-{\varepsilon})\hat f^2_0(\mu_0)+{\varepsilon}\hat f^2_0(\nu_*)$. Again by Lemma \ref{basic result} (iv), we have
\begin{align*}
\mathcal{P}_d(\mu_2,\hat f_1(\mu_1))=\mathcal{P}_d\left((1-{\varepsilon})\hat f^2_0(\mu_0)+{\varepsilon}\hat f^2_0(\nu_*),\left(1-{\varepsilon\over 2}\right)\hat f^2_0(\mu_0)+{\varepsilon\over 2}\hat f^2_0(\nu_*)\right)\leq {\varepsilon\over2},
\end{align*}
which gives $\hat f_1(\mu_1)\in \bar{B}_{\mathcal{P}_d}(\mu_2,{\varepsilon/2})$. By induction with setting
\begin{align*}
\mu_n=(1-{n\varepsilon/2})\hat f^n_0(\mu_0)+n\varepsilon\hat f^n_0(\nu_*)/2,\;1\leq n\leq N-1,
\end{align*}
we get $\hat f_{n-1}(\mu_{n-1})\in \bar{B}_{\mathcal{P}_d}(\mu_n,{\varepsilon/2})$. Define $\mu_N=\hat f_0^N(\nu_*)$.
By the choice of $N$, we get
\begin{align*}
\hat f_{N-1}(\mu_{N-1})(K)&=\left(1-{N\varepsilon\over 2}\right)\hat f^N_0(\mu_0)(K)+{\varepsilon\over 2}\hat f^N_0(\mu_0)(K)+{(N-1)\varepsilon\over 2} \mu_N(K)\leq{\varepsilon\over 2}+\mu_N(K^{\varepsilon\over2})
\end{align*}
for $K\in\mathcal{B}(X)$, which means that $\hat f_{N-1}(\mu_{N-1})\in \bar{B}_{\mathcal{P}_d}(\mu_N,{\varepsilon/2})$. Set
$\mu_{n+1}=\hat f_{n}(\mu_n)$ for $N\leq n\leq k-1$. Then
\begin{align*}
\mu_k=\hat f_N^{k-N}(\mu_N)=\hat f_N^{k-N}(\hat f_0^N(\nu_*))=\nu
\end{align*}
and
$\hat f_{n}(\mu_{n})\in {B_{\mathcal{P}_d}(\mu_{n+1},{\varepsilon})}$ for $0\leq n\leq k-1$. Hence, $\{\mu=\mu_0,\mu_1,\cdots,\mu_k=\nu\}$
is an $\varepsilon$-chain with length $k$. Therefore, $(\mathcal{M}(X),\hat f_{0,\infty})$ is chain mixing.
\end{proof}

Theorem \ref{chain mixing} has two applications.
The first one is a combination with Lemma \ref{chain}.

\begin{corol}\label{all}
Let $f_n: X\to X$ be a surjective map for $n\geq0$. Then $(\mathcal{M}(X), \hat f_{0,\infty})$  is chain mixing, chain
 weakly mixing of all orders, chain exact, and chain transitive.
\end{corol}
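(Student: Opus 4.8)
The plan is to combine Theorem \ref{chain mixing} with Lemma \ref{chain}, since every conclusion of Corollary \ref{all} is an immediate consequence of chain mixing of $(\mathcal{M}(X),\hat f_{0,\infty})$ together with the chain of implications already established. First I would invoke Theorem \ref{chain mixing}: because each $f_n$ is surjective for $n\geq 0$, that theorem applies verbatim to give that $(\mathcal{M}(X),\hat f_{0,\infty})$ is chain mixing. This is the substantive input, and all remaining work is purely formal.

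Next I would feed this into Lemma \ref{chain}. By Lemma \ref{chain}(i), chain mixing of $(\mathcal{M}(X),\hat f_{0,\infty})$ implies it is chain weakly mixing of all orders. By Lemma \ref{chain}(ii), chain weak mixing of all orders implies chain exactness. By Lemma \ref{chain}(iii), chain exactness implies chain transitivity. (One should read Lemma \ref{chain} as applied to the non-autonomous system $(\mathcal{M}(X),\hat f_{0,\infty})$ in place of $(X,f_{0,\infty})$; this is legitimate since $(\mathcal{M}(X),\mathcal{P}_d)$ is a compact metric space and each $\hat f_n$ is continuous, so the standing hypotheses of Section 3 are met.) Chaining these four facts yields all four asserted properties simultaneously.

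I do not anticipate any genuine obstacle here: the proof is a one-line citation chain once Theorem \ref{chain mixing} is in hand. The only point requiring a moment's care is the observation that Lemma \ref{chain} was stated for a generic non-autonomous system $(X,f_{0,\infty})$ and we are applying it to the induced system, which is entirely routine given the compactness of $\mathcal{M}(X)$ and continuity of the $\hat f_n$ recorded in the Introduction. So the write-up is essentially: apply Theorem \ref{chain mixing}, then apply Lemma \ref{chain}(i)--(iii) in succession.

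\begin{proof}
Since $f_n$ is surjective for every $n\geq0$, Theorem \ref{chain mixing} shows that $(\mathcal{M}(X),\hat f_{0,\infty})$ is chain mixing. Recalling that $(\mathcal{M}(X),\mathcal{P}_d)$ is a compact metric space and each $\hat f_n$ is continuous, we may apply Lemma \ref{chain} to the non-autonomous system $(\mathcal{M}(X),\hat f_{0,\infty})$. By Lemma \ref{chain}(i), $(\mathcal{M}(X),\hat f_{0,\infty})$ is chain weakly mixing of all orders; by Lemma \ref{chain}(ii), it is chain exact; and by Lemma \ref{chain}(iii), it is chain transitive.
\end{proof}
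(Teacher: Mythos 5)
Your proof is correct and is exactly the argument the paper intends: the paper introduces Corollary \ref{all} as "a combination with Lemma \ref{chain}" applied to the chain mixing supplied by Theorem \ref{chain mixing}. Your reading of Lemma \ref{chain}(ii) as "chain weak mixing of all orders implies chain exactness" is the right one (consistent with that lemma's proof, which uses chain transitivity of $(X^k,f_{0,\infty}^k)$), so the citation chain goes through as you describe.
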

Therefore,
the surjective condition on $f_n$ brings fruitful chain properties for $(\mathcal{M}(X), \hat f_{0,\infty})$.  However, as is shown in the next remark, it is not the case for hyperspace.
\begin{remark}
Let $X=\{a,b\}$ with the discrete metric, $f(a)=b$ and $f(b)=a$. Clearly, $(X,f)$ is chain transitive. Since $f$ is surjective,
$(\mathcal{M}(X),\hat{f})$ is also chain transitive by Corollary \ref{all}. However, it was shown by
Example 3 in \cite{FGPR15} that in hyperspace, $(\mathcal{K}(X),\bar{f})$ is not chain transitive.
\end{remark}

The other application of Theorem \ref{chain mixing} is  as follows.

\begin{corol}\label{adschain}
If $(X,f)$ is chain transitive, then $(\mathcal{M}(X), \hat f)$  is chain mixing, chain
 weakly mixing of all orders, chain exact, and chain transitive.
\end{corol}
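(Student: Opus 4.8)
The plan is to deduce Corollary~\ref{adschain} from Theorem~\ref{chain mixing} together with Corollary~\ref{all}. The only gap to fill is that Theorem~\ref{chain mixing} requires each constituent map to be surjective, whereas the hypothesis here is merely that the autonomous system $(X,f)$ is chain transitive. So the first step is to observe that a chain transitive system on a compact metric space must be surjective: if $f(X)\subsetneq X$, pick $y\in X\setminus f(X)$; since $f(X)$ is compact, hence closed, there is $\varepsilon>0$ with $B_d(y,\varepsilon)\cap f(X)=\emptyset$, and then no $\varepsilon$-chain can ever reach $y$ from any point, contradicting chain transitivity. (Here I am using $f(X)\subset X$ throughout and that $X$ is compact so $f(X)$ is closed.)

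Once surjectivity of $f$ is in hand, I would apply Theorem~\ref{chain mixing} to the autonomous non-autonomous system with $f_n=f$ for all $n\ge0$: since every $f_n=f$ is surjective, $(\mathcal{M}(X),\hat f_{0,\infty})=(\mathcal{M}(X),\hat f)$ is chain mixing. Then Corollary~\ref{all} (or equivalently Lemma~\ref{chain} applied directly) upgrades chain mixing to chain weak mixing of all orders, chain exactness, and chain transitivity, which is exactly the stated conclusion.

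I do not anticipate a real obstacle here; the corollary is essentially immediate once one notices the surjectivity implication. The only point requiring a word of care is that the surjectivity deduction uses compactness of $X$ in an essential way (to guarantee $f(X)$ is closed, so that its complement is open and contains a ball around the missed point $y$). For a general non-autonomous system this reduction fails — chain transitivity of $(X,f_{0,\infty})$ does not force each $f_n$ to be surjective — which is precisely why the non-autonomous version is phrased as Theorem~\ref{chain mixing} under an explicit surjectivity hypothesis rather than as a corollary of chain transitivity. Thus the autonomous statement is genuinely special, and the proof is just: chain transitive $\Rightarrow$ surjective $\Rightarrow$ apply Theorem~\ref{chain mixing} and Corollary~\ref{all}.
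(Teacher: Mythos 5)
Your proposal is correct and follows essentially the same route as the paper: the paper's proof also reduces to observing that chain transitivity forces $f$ to be surjective (citing Proposition 2.6 of Kurka's book rather than proving it inline as you do) and then invokes Theorem \ref{chain mixing} together with Lemma \ref{chain}. Your direct argument for surjectivity (using that $f(X)$ is closed in the compact space $X$, so a missed point has a ball disjoint from $f(X)$ that no chain's final step can enter) is a valid substitute for the citation.
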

\begin{proof}
Since $(X,f)$ is chain transitive, Proposition 2.6 in \cite{K13} ensures that $f$ is surjective. Then we get the conclusion from Theorem  \ref{chain mixing} and  Lemma \ref{chain}.
\end{proof}
It is natural to consider whether the converse of Corollary \ref{adschain} is true. The following two counterexamples
confirm that chain mixing of $(\mathcal{M}(X),\hat f)$ does not imply chain mixing, not even chain transitivity of $(X,f)$.
The first example is for a surjective map, and the second one is for a homeomorphism.

\begin{example}\label{surjective} Let  $I=[0,1]$ and
\begin{align*}
		f(x)=\left\{\begin{array}{ll}
		0,& x\in [0,{1\over2}],\\
		2x-1, & x\in ({1\over2},1].
		\end{array}\right.
	\end{align*}
 Clearly,
$f: I\to I$ is continuous and surjective. Thus, $(\mathcal{M}(I),\hat{f})$ is
chain mixing by Theorem \ref{chain mixing}. However,  $(I,f)$ is not chain transitive. In fact, by choosing $\delta_0\in(0,{1\over2})$, $x_0=0$
and $y_0={2\over3}$, it can be verified that for any  $\delta_0$-pseudo orbit $\{x_n\}_{n=0}^\infty$ from  $x_0$, $\{x_n\}_{n=0}^\infty\subset[0,{1\over2}]$.  Indeed, $x_1\in B_d(f(x_0),\delta_0)=B_d(0,\delta_0)\subset[0,{1\over2}]$. If $x_{n}\in[0,{1\over2}]$ for some $n>0$, then $f(x_n)=0$ and $x_{n+1}\in B_d(f(x_n),\delta_0)\subset[0,{1\over2}]$. Thus, $\{x_n\}_{n=0}^\infty\subset[0,{1\over2}]$. However, $y_0={2\over3}$, which means that there are no $\delta_0$-chains from $x_0$ to $y_0$.
\end{example}

\begin{example}\label{homeomorphism} Let $I=[0,1]$, and
 \begin{align*}
 f(x)=x^{2},\;\; x\in I.
 \end{align*}
Then $f: I\to I$ is a homeomorphism. Thus, $(\mathcal{M}(I),\hat{f})$ is chain mixing by Theorem \ref{chain mixing}.
However, $(I,f)$ is not chain transitive. Let $\delta_{0}={1\over4}$, $x_0={1\over2}$ and $y_0=1$. We claim that
for any  $\delta_0$-pseudo orbit $\{x_n\}_{n=0}^\infty$ from  $x_0$, $\{x_n\}_{n=0}^\infty\subset[0,{1\over2})$.
Clearly, $x_{1}\in B_d(f(x_0),\delta_{0})=(0,{1\over2})$. Suppose that $x_{n}\in[0,{1\over2})$ for some $n>0$.
Then $f(x_n)\in[0,{1\over4})$, and thus $x_{n+1}\in B_d(f(x_n),\delta_{0})$ $\subset[0,{1\over2})$.
Hence, $\{x_n\}_{n=0}^\infty\subset[0,{1\over2})$. Since $y_0=1$, there are no $\delta_{0}$-chains
from $x_0$ to $y_0$.
\end{example}

\subsection{Shadowing and specification}
We first recall some  concepts related to the shadowing property.
$(X,f_{0,\infty})$  has the shadowing property if, for every $\varepsilon>0$, there exists $\delta>0$ such that  every $\delta$-pseudo orbit
$\{x_0,x_1,\cdots\}$ is $\varepsilon$-shadowed by some point in $X$ (i.e. there exists $y\in X$ such that
$d(f_0^n(y),x_n)<\varepsilon$ for all $n=0,1\cdots$). A sequence $\{x_i\}_{i=0}^\infty \subset X$ is  a $\delta$-average-pseudo orbit of $(X,f_{0,\infty})$
 for some $\delta>0$ if there is
$N>0$ such that for any  $n\geq N$ and $k\geq 0$,
${1\over n}
\sum\limits_{i=0}^{n-1}
d(f_{i+k}(x_{i+k}), x_{i+k+1})<\delta$.
$(X,f_{0,\infty})$ has the average-shadowing property if, for any $\varepsilon>0$, there is $\delta>0$ such that every $\delta$-average-pseudo
orbit $\{x_i\}_{i=0}^\infty$ is $\varepsilon$-shadowed in average by some point $z\in X$; that is,
$\limsup\limits_{n\to\infty}
{1\over n}
\sum\limits_{i=0}^{n-1}
d(f_0^i(z), x_i)<\varepsilon.$

It was proved in Theorem 14 and Remark 15 of \cite{Bernardes} that weak shadowing  of  $(\mathcal{M}(X), \hat f)$ implies  transitivity of $(X, f)$ if $f$ is homeomorphism. The next result indicates that  shadowing  of  $(\mathcal{M}(X), \hat f_{0,\infty})$ implies  mixing of $(X, f_{0,\infty})$ if $f_n$ is surjective for $n\geq0$.

\begin{theorem}\label{mixingshadowing}
Let $f_n: X\to X$ be a surjective map for $n\geq0$. If $(X,  f_{0,\infty})$ is not  mixing,
then $(\mathcal{M}(X), \hat f_{0,\infty})$ does not have  shadowing.
\end{theorem}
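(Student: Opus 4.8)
The statement is the contrapositive of "shadowing of $(\mathcal{M}(X),\hat f_{0,\infty})$ implies mixing of $(X,f_{0,\infty})$" (under the surjectivity hypothesis). So I would assume $(X,f_{0,\infty})$ is not mixing and construct, for some fixed $\varepsilon_0>0$, a family of $\delta$-pseudo orbits in $\mathcal{M}(X)$ — one for every $\delta>0$ — that cannot be $\varepsilon_0$-shadowed. The natural starting point is to unwind what failure of mixing means: there are nonempty open sets $U,V\subset X$ and an infinite set of "bad" times $n$ with $f_0^n(U)\cap V=\emptyset$. Since the $f_n$ are surjective, $(X,f_{0,\infty})$ is at least "eventually onto" in a weak sense, but more importantly I want to leverage Theorem \ref{chain mixing}: $(\mathcal{M}(X),\hat f_{0,\infty})$ \emph{is} chain mixing. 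So the system on measures has lots of $\delta$-chains; the point is that shadowing would then force genuine orbits to realize the mixing pattern on $\mathcal{M}(X)$, which by Proposition \ref{transitive}-type arguments would push mixing back down to $(X,f_{0,\infty})$, a contradiction.

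\textbf{Key steps.} First I would record: $(X,f_{0,\infty})$ not mixing gives nonempty open $U,V\subset X$ and an increasing sequence $n_1<n_2<\cdots$ with $f_0^{n_j}(U)\cap V=\emptyset$ for all $j$. Fix points $x\in U$, $y\in V$ and radii so that $\bar B_d(x,2r)\subset U$, $\bar B_d(y,2r)\subset V$; set $\varepsilon_0$ comparable to $r$ (say $\varepsilon_0=r/2$), and consider the measures $\delta_x,\delta_y\in\mathcal{M}(X)$. Second, given any $\delta>0$, use chain mixing of $(\mathcal{M}(X),\hat f_{0,\infty})$ (Theorem \ref{chain mixing}) to produce, for each large $j$, a $\delta$-chain of length exactly $n_j$ from $\delta_x$ to $\delta_y$; concatenate these into a single infinite $\delta$-pseudo orbit $\{\mu_m\}_{m\ge0}$ that passes through $\delta_x$ at a sequence of times $t_i$ and through $\delta_y$ at times $t_i+n_{j(i)}$. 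Third, if this pseudo orbit were $\varepsilon_0$-shadowed by some $\mu\in\mathcal{M}(X)$, then $\mathcal{P}_d(\hat f_0^{t_i}(\mu),\delta_x)<\varepsilon_0$ and $\mathcal{P}_d(\hat f_0^{t_i+n_{j(i)}}(\mu),\delta_y)<\varepsilon_0$; the first forces $\hat f_0^{t_i}(\mu)(U)$ close to $1$ (in fact $>1-2\varepsilon_0$, via the Prohorov-metric estimate using $\bar B_d(x,2r)\subset U$), and the second forces $\hat f_0^{t_i+n_{j(i)}}(\mu)(V)>1-2\varepsilon_0$. Fourth, derive the contradiction: writing $\mu_i':=\hat f_0^{t_i}(\mu)$, we have $\mu_i'(U)$ large but $\hat f_0^{n_{j(i)}}(\mu_i')(V)=\mu_i'\big(f_0^{-n_{j(i)}}(V)\big)$ must also be large; yet $f_0^{n_{j(i)}}(U)\cap V=\emptyset$ means $U\cap f_0^{-n_{j(i)}}(V)=\emptyset$, so $\mu_i'(U)+\mu_i'(f_0^{-n_{j(i)}}(V))\le1$, incompatible with both being $>1/2$ (choosing $\varepsilon_0<1/4$). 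Hence no $\varepsilon_0$-shadowing point exists, and since $\delta$ was arbitrary, $(\mathcal{M}(X),\hat f_{0,\infty})$ lacks shadowing.

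\textbf{Main obstacle.} The delicate point is Step 2: I need $\delta$-chains in $(\mathcal{M}(X),\hat f_{0,\infty})$ of \emph{prescribed} lengths $n_j$ from $\delta_x$ to $\delta_y$, and I must be able to splice infinitely many of them together while keeping every consecutive pair $\delta$-close — this is exactly what chain mixing provides (for all sufficiently large lengths), so I should arrange $\{n_j\}$ (a subsequence of the bad times) to lie beyond the threshold $N=N(\delta,\delta_x,\delta_y)$ coming from Theorem \ref{chain mixing}; note the threshold depends on $\delta$, so the spliced pseudo orbit is built per $\delta$, which is fine. A secondary technical nuisance is making the Prohorov estimates in Steps 3–4 clean: translating $\mathcal{P}_d(\hat f_0^{t}(\mu),\delta_x)<\varepsilon_0$ into a lower bound on $\hat f_0^t(\mu)(U)$ requires using that the $\varepsilon_0$-neighborhood of $\{x\}$ sits inside $U$ and a one-sided Prohorov inequality (as in (\ref{measure-metric-def})) — the same kind of computation already used in the proof of Theorem \ref{sharp condition transitivity on interval} and in Proposition \ref{transitive}, so it is routine but must be done with the right constants ($\varepsilon_0<1/4$, $r$ chosen up front). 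Everything else is bookkeeping. I would also remark that the surjectivity hypothesis enters only through Theorem \ref{chain mixing} (it is what makes $\hat f_0^k$ onto and hence the measure system chain mixing); without it the construction of the spliced pseudo orbit could fail.
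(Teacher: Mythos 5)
Your proposal rests on the same key ingredient as the paper's proof, namely Theorem \ref{chain mixing} (chain mixing of $(\mathcal{M}(X),\hat f_{0,\infty})$ under surjectivity) combined with shadowing, but you execute the endgame differently and less efficiently. The paper argues in the contrapositive entirely at the level of $\mathcal{M}(X)$: given shadowing, every $\delta_0$-chain of length $k\ge N$ from $\mu\in\mathcal{U}$ to $\nu\in\mathcal{V}$ supplied by Theorem \ref{chain mixing} is $\varepsilon_0$-shadowed by a true orbit, so $(\mathcal{M}(X),\hat f_{0,\infty})$ is mixing, and Theorem \ref{mixing} immediately transfers mixing down to $(X,f_{0,\infty})$, a contradiction. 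You instead unfold both steps by hand, working with Dirac measures $\delta_x,\delta_y$ and explicit bad times $n_j$, and redoing the Prohorov estimates that Proposition \ref{transitive} and Theorem \ref{mixing} already encapsulate. That is legitimate, but it buys nothing over citing Theorem \ref{mixing}, and your splicing construction introduces a genuine wrinkle.

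The wrinkle: failure of mixing gives $f_0^{n_j}(U)\cap V=\emptyset$ with the composition anchored at time $0$. In your spliced pseudo orbit the $i$-th block runs from time $t_i$ to time $t_i+n_{j(i)}$, so an $\varepsilon_0$-shadowing point only yields $f_{t_i}^{n_{j(i)}}(U)\cap V\neq\emptyset$, where $f_{t_i}^{n}$ denotes the composition beginning with $f_{t_i}$; in the non-autonomous setting this contradicts nothing unless $t_i=0$. (Relatedly, the paper's $\delta$-chains, hence Theorem \ref{chain mixing}, are anchored at time $0$, so chains starting at later times are not literally available without rerunning that proof.) The fix is simply to drop the splicing: one block suffices. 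For each $\delta>0$ take the threshold $N(\delta)$ from Theorem \ref{chain mixing}, pick a single bad time $n_j\ge N(\delta)$, take one $\delta$-chain of length $n_j$ from $\delta_x$ to $\delta_y$, and extend it beyond time $n_j$ by the true orbit; if it were $\varepsilon_0$-shadowed, your Steps 3--4 give $f_0^{n_j}(U)\cap V\neq\emptyset$, a contradiction. With that correction your argument is sound, and your constants ($\varepsilon_0<1/4$, the one-sided inequality (\ref{measure-metric-def})) work as described.
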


\begin{proof}
Suppose, on the contrary, that $(\mathcal{M}(X), \hat f_{0,\infty})$ has shadowing. Let $\mathcal{U}$ and $\mathcal{V}$ be two
nonempty open subsets of $\mathcal{M}(X)$. Fix $\mu\in\mathcal{U}$ and $\nu\in\mathcal{V}$. Choose $\varepsilon_0>0$
such that $B_{\mathcal{P}_d}(\mu,\varepsilon_0)\subset\mathcal{U}$ and $B_{\mathcal{P}_d}(\nu,\varepsilon_0)\subset\mathcal{V}$.
Then there exists $\delta_0>0$ such that every $\delta_0$-pseudo orbit in $\mathcal{M}(X)$ is $\varepsilon_0$-shadowed.
By Theorem \ref{chain mixing}, there exists $N\geq 1$ such that for any $k\geq N$, there exists a $\delta_0$-chain $\{\mu=\mu_0,\mu_1,\cdots,\mu_k=\nu\}$. So, there exists $\eta\in\mathcal{M}(X)$ such that
$\mathcal{P}_d(\hat f_0^n(\eta),\mu_n)<\varepsilon_0$ for all $0\leq n\leq k$. In particular,
$\eta\in B_{\mathcal{P}_d}(\mu,\varepsilon_0)\subset \mathcal{U}$ and $\hat f_0^k(\eta)\in B_{\mathcal{P}_d}(\nu,\varepsilon_0)\subset \mathcal{V}$. Hence, $(\mathcal{M}(X),\hat f_{0,\infty})$ is  mixing, and so is $(X, f_{0,\infty})$ by Theorem \ref{mixing}, which is a contradiction.
\end{proof}

As an application of Theorem \ref{mixingshadowing}, it can be shown that   shadowing of $(X,f)$ is not inherited by $(\mathcal{M}(X),\hat f)$ in general.

\begin{theorem}\label{discrete-example}
There exists $(X,f)$ such that it has shadowing but
$(\mathcal{M}(X),\hat f)$ does not have shadowing.
\end{theorem}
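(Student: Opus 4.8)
The plan is to exhibit a concrete autonomous system $(X,f)$ that has the shadowing property but is not mixing, and then invoke Theorem \ref{mixingshadowing} to conclude that $(\mathcal{M}(X),\hat f)$ fails shadowing. Since Theorem \ref{mixingshadowing} requires $f$ to be surjective, the example must be a surjective (indeed, I would take it to be a homeomorphism or at least onto) map on a compact metric space that shadows but is not topologically mixing. The cleanest choice is a simple subshift or a finite-graph example: for instance, $X=\{0,1\}^{\mathbf Z}$ with $f$ the full shift restricted to a suitable subshift, or even more elementary, a system built on a circle rotation replaced by something with shadowing. The point is that shadowing is abundant among hyperbolic-type systems while mixing is a strong recurrence condition, so a non-mixing shadowing system is easy to find.

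Concretely, I would take $X = \{0,1\}^{\mathbf N}$ (or $\{0,1\}^{\mathbf Z}$) and $f=\sigma$ the shift restricted to the subshift $Y$ consisting of all sequences over the alphabet $\{a,b\}$ in which $a$ and $b$ alternate — equivalently, the two-point orbit system, or more robustly a subshift of finite type whose transition graph is a single cycle of length $2$. Such a subshift of finite type has the shadowing property (subshifts of finite type always do), the shift on it is surjective, but it is not transitive, hence not mixing. Alternatively, and perhaps even simpler to state, one may let $X=\{a,b\}$ with the discrete metric and $f(a)=b$, $f(b)=a$: this $f$ is a surjective homeomorphism, it has shadowing trivially (every $\delta$-pseudo orbit with $\delta<1$ is a genuine orbit, hence $0$-shadowed), and $(X,f)$ is not mixing since $N(\{a\},\{a\})$ consists only of the even integers and omits arbitrarily large odd ones. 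By Theorem \ref{mixingshadowing}, since $f$ is surjective and $(X,f)$ is not mixing, $(\mathcal{M}(X),\hat f)$ does not have shadowing.

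So the structure of the proof is: (1) define $(X,f)$ as above and note $f$ is continuous and surjective; (2) verify $(X,f)$ has the shadowing property — for the two-point example this is immediate because small pseudo-orbits are actual orbits, and for a subshift of finite type one cites the standard fact; (3) verify $(X,f)$ is not mixing by pointing to a pair of open sets whose hitting-time set misses infinitely many integers; (4) apply Theorem \ref{mixingshadowing} to conclude $(\mathcal{M}(X),\hat f)$ lacks shadowing. The only real subtlety — and the step I would be most careful about — is making sure the chosen system genuinely \emph{has} shadowing while genuinely \emph{failing} mixing and \emph{remaining surjective}, since these three requirements together rule out the most naive examples (e.g.\ a constant map is not surjective; a single fixed point is mixing). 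The two-point flip $f(a)=b,\ f(b)=a$ threads all three needles at once and keeps every verification one line long, so I expect the proof to be short; the main "obstacle" is purely expository, namely stating the shadowing verification cleanly for whatever model one picks.
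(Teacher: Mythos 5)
Your proposal is correct and is essentially identical to the paper's own proof: the paper also takes $X=\{a,b\}$ with the discrete metric and the flip $f(a)=b$, $f(b)=a$, notes that every $\tfrac12$-pseudo orbit is a true orbit so shadowing holds, and invokes Theorem \ref{mixingshadowing} (using that $f$ is surjective but $(X,f)$ is not mixing) to conclude that $(\mathcal{M}(X),\hat f)$ lacks shadowing. The paper merely supplements this with an optional direct construction of a non-shadowable $\delta$-pseudo orbit in $\mathcal{M}(X)$, which your argument does not need.
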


\begin{proof}
Let $X=\{a,b\}$ with the discrete metric $d$ and $f(a)=b$, $f(b)=a$.
Let $\epsilon>0$ and $\tilde\delta=1/2$. The $\tilde\delta$-pseudo orbits of $(X,f)$ are exactly $\{a,b,a,b,\cdots\}$ and $\{b,a,b,a,\cdots\}$.
Both of them are the true orbits of $(X,f)$. Thus, $(X,f)$ has shadowing.
Since $(X,f)$ is not  mixing, it follows  from Theorem \ref{mixingshadowing} that $(\mathcal{M}(X),\hat f)$ does not have shadowing.

Then,  a direct proof is provided for no shadowing  of $(\mathcal{M}(X),\hat f)$.
 It is straightforward to show that
\begin{align*}
\mathcal{M}(X)=\{\alpha\delta_{a}+\beta\delta_{b}: \;\alpha+\beta=1, \;0\leq \alpha,\beta\leq1\},
\end{align*}
and all the points in $\mathcal{M}(X)$ are periodic points with period 2.
Now, we show that $(\mathcal{M}(X),\hat{f})$ does not have shadowing.
Let $d_0=\mathcal{P}_{d}(\delta_a,(\delta_a+\delta_b)/2)>0$ and $\epsilon_0=d_0/2$.
 Choose $n_0\in\mathbf{Z^{+}}\cap({1\over\delta}-2,{1\over\delta}+2)$ for any $\delta\in (0,{1\over2})$.
Then we  construct a $\delta$-pseudo orbit $\{\nu_n\}_{n=0}^{\infty}$ such that for any $\mu\in\mathcal{M}(X)$, there exists $N_0\in\mathbf{Z}^+$ satisfying $d(\hat f_0^{N_0}(\mu),\nu_{N_0})\geq \varepsilon_0$. Define
\begin{align*}
		\nu_n=\left\{\begin{array}{ll}
		(1-{n\delta\over 2})\delta_a+{n\delta\over2}\delta_b,& \textrm{ if}\; 0\leq n\leq n_0 \textrm{ and } n \textrm{ is even};\\
		(1-{n\delta\over 2})\delta_b+{n\delta\over2}\delta_a, & \textrm{ if}\; 0\leq n\leq n_0 \textrm{ and } n \textrm{ is odd};\\
{1\over 2}(\delta_a+\delta_b),& \textrm{ if}\;  n>n_0.
		\end{array}\right.
	\end{align*}
By Lemma \ref{basic result} (iv) and the choice of $n_0$, we have  $d(\hat f_n(\nu_n),\nu_{n+1})<\delta$ for all $n\geq 0$, and thus $\{\nu_n\}_{n=0}^\infty$ is a $\delta$-pseudo orbit.
Fix any $\mu=\alpha_0\delta_{a}+\beta_0\delta_{b}\in\mathcal{M}(X)$, where $0\leq\alpha_0,\beta_0\leq1$.
Since $\mu$ is a $2$-periodic point, there exists $N_0\geq 1$ such that $\nu_{N_0}=(\delta_a+\delta_b)/2$
and $\hat{f}^{N_0}(\mu)=\mu$. If $\mathcal{P}_{d}(\mu,\delta_a)<\epsilon_0$, then
\begin{align*}
\mathcal{P}_{d}(\hat{f}^{N_0}(\mu),\nu_{N_0})=\mathcal{P}_{d}\left(\mu,\frac{1}{2}(\delta_a+\delta_b)\right)
\geq\mathcal{P}_{d}\left(\delta_a,\frac{1}{2}(\delta_a+\delta_b)\right)-\mathcal{P}_{d}(\mu,\delta_a)>\epsilon_0,
\end{align*}
which implies that the $\delta$-pseudo orbit $\{\nu_n\}_{n=0}^{\infty}$ cannot be $\epsilon_0$-shadowed
by any point $\mu\in\mathcal{M}(X)$. Hence, $(\mathcal{M}(X),\hat{f})$ does not have shadowing.
\end{proof}
The converse of Theorem \ref{discrete-example} is still open.
In contrast to shadowing, it will be shown that specification property (resp., property $P$) of $(X,f_{0,\infty})$ implies those of $(\mathcal{M}(X),\hat{f}_{0,\infty})$.
Recall that $(X,f_{0,\infty})$ has specification property if, for any $\epsilon>0$, there exists $M_{\epsilon}\in
\mathbf{Z^{+}}$ such that for any $k\geq2$, any $x_1,\cdots,x_k\in X$, and any $2k$ non-negative integers $a_1\leq
b_1<a_2\leq b_2<\cdots<a_k\leq b_k$ with $a_{i}-b_{i-1}\geq M_{\epsilon}$, $2\leq i\leq k$, there exists $z\in X$
such that for all $1\leq i\leq k$, it satisfies $d(f^{n}_{0}(z),f^{n}_{0}(x_i))\leq\epsilon$, $a_i\leq n\leq b_i$.
$(X,f_{0,\infty})$  has property $P$ if, for any two nonempty open subsets $U_1,U_2\subset X$, there exists $N\geq 1$
such that for any $k\geq2$ and any sequence $s=(s_1,s_2,\cdots,s_k)\in \Sigma_{2}^{k}$, there exists $x\in X$ satisfying that
$x\in U_{s_1}$, $f^{N}_{0}(x)\in U_{s_2}$, $\cdots$, $f^{(k-1)N}_{0}(x)\in U_{s_k}$, where $\Sigma_{2}^{k}
=\{\alpha=(a_1,\cdots,a_k): a_i\in\{1,2\}, 1\leq i\leq k\}$.

\begin{proposition}\label{specification property-property P}
{\rm(i)} If $(X,f_{0,\infty})$ has specification property, then so does $(\mathcal{M}(X),\hat{f}_{0,\infty})$.

{\rm(ii)}
If $(X,f_{0,\infty})$ has property $P$, then so does $(\mathcal{M}(X),\hat{f}_{0,\infty})$.
\end{proposition}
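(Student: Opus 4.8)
The plan is to treat both parts by the two-step scheme already used in the paper: reduce each statement about $\mathcal{M}(X)$ to one about finitely-supported uniform measures (Lemma \ref{basic result}(ii) and Lemma \ref{3.2}), and then realize the required orbit ``column by column'' by invoking the corresponding property on $X$.

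\textbf{Part (i).} Fix $\epsilon>0$ and set $M_\epsilon:=M^X_{\epsilon/2}$, where $M^X_{\epsilon/2}$ is the specification constant of $(X,f_{0,\infty})$ for $\epsilon/2$. Let $\mu_1,\dots,\mu_k\in\mathcal{M}(X)$ and $a_1\le b_1<\dots<a_k\le b_k$ with $a_i-b_{i-1}\ge M_\epsilon$ be given. For each $i$, the family $\{\hat f_0^n:a_i\le n\le b_i\}$ is a finite set of continuous self-maps of $\mathcal{M}(X)$, so using that $\mathcal{M}_\infty(X)$ is dense one may choose $\nu_i=\frac{1}{m_i}\sum_{j=1}^{m_i}\delta_{x_{ij}}$ so close to $\mu_i$ that $\mathcal{P}_d(\hat f_0^n(\nu_i),\hat f_0^n(\mu_i))<\epsilon/2$ for every $n\in[a_i,b_i]$. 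Rewriting each $\nu_i$ over a common denominator $m$ (a common multiple of the $m_i$, each atom repeated $m/m_i$ times), put $\nu_i=\frac{1}{m}\sum_{l=1}^m\delta_{x_{il}}$. Now apply the specification property of $(X,f_{0,\infty})$ for $\epsilon/2$ to each column $l$ separately: the points $x_{1l},\dots,x_{kl}$ with the given windows (whose gaps are $\ge M_\epsilon=M^X_{\epsilon/2}$) yield $z_l\in X$ with $d(f_0^n(z_l),f_0^n(x_{il}))\le\epsilon/2$ for $n\in[a_i,b_i]$ and all $i$. Set $\sigma:=\frac{1}{m}\sum_{l=1}^m\delta_{z_l}$. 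By Lemma \ref{basic result}(iii) one has $\hat f_0^n(\sigma)=\frac{1}{m}\sum_{l}\delta_{f_0^n(z_l)}$ and $\hat f_0^n(\nu_i)=\frac{1}{m}\sum_{l}\delta_{f_0^n(x_{il})}$; since corresponding atoms lie within $\epsilon/2$, the definition (\ref{measure-metric-def}) of $\mathcal{P}_d$ gives $\mathcal{P}_d(\hat f_0^n(\sigma),\hat f_0^n(\nu_i))\le\epsilon/2$, and the triangle inequality yields $\mathcal{P}_d(\hat f_0^n(\sigma),\hat f_0^n(\mu_i))\le\epsilon$ on each window. Hence $(\mathcal{M}(X),\hat f_{0,\infty})$ has the specification property with constant $M_\epsilon$.

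\textbf{Part (ii).} Fix nonempty open $\mathcal{U}_1,\mathcal{U}_2\subset\mathcal{M}(X)$. By Lemma \ref{3.2} there are an integer $m\ge1$ and nonempty open subsets $U_1^j,\dots,U_m^j\subset X$ ($j=1,2$) such that $\frac{1}{m}\sum_{l=1}^m\delta_{y_l^j}\in\mathcal{U}_j$ whenever $y_l^j\in U_l^j$ for all $l$. For each $l$, property $P$ of $(X,f_{0,\infty})$ applied to the pair $(U_l^1,U_l^2)$ produces some $N_l\ge1$. The key observation is that property $P$ with transfer time $N$ implies property $P$ with transfer time $cN$ for every integer $c\ge1$: given a prescribed word in $\Sigma_2^k$ one inserts $c-1$ unconstrained symbols between consecutive prescribed ones and applies property $P$ with transfer time $N$ to the resulting longer word. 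Consequently $N:=N_1\cdots N_m$ simultaneously serves every pair $(U_l^1,U_l^2)$, $1\le l\le m$. Given $k\ge2$ and $s=(s_1,\dots,s_k)\in\Sigma_2^k$, for each $l$ property $P$ for $(U_l^1,U_l^2)$ with transfer time $N$ yields $z_l\in X$ with $f_0^{(j-1)N}(z_l)\in U_l^{s_j}$ for $1\le j\le k$. Let $\mu:=\frac{1}{m}\sum_{l=1}^m\delta_{z_l}$. By Lemma \ref{basic result}(iii), $\hat f_0^{(j-1)N}(\mu)=\frac{1}{m}\sum_{l}\delta_{f_0^{(j-1)N}(z_l)}\in\mathcal{U}_{s_j}$ for $1\le j\le k$, so $(\mathcal{M}(X),\hat f_{0,\infty})$ has property $P$.

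\textbf{Expected difficulty.} In (i) the delicate point is that $\hat f_0^n$ need not be non-expansive for $\mathcal{P}_d$ (the underlying $f_n$ may be expanding), so one cannot approximate $\mu_i$ once and for all; the approximation must be chosen relative to the finite time window $[a_i,b_i]$, which is legitimate since the window is given before $\nu_i$ is selected. The crux of (ii) is securing a single transfer time across all $m$ columns, which is precisely what the ``multiples of the transfer time'' remark provides; equivalently, one verifies that property $P$ passes to the power system $(X^m,f_{0,\infty}^m)$ and applies it to $\prod_{l}U_l^1$ and $\prod_{l}U_l^2$.
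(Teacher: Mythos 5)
Your proposal is correct and follows essentially the same route as the paper: in (i) you approximate each $\mu_i$ by a uniform finitely-supported measure over a common denominator (using continuity of the finitely many maps $\hat f_0^n$, as the paper does via uniform continuity on $[1,b_k]$) and apply specification on $X$ column by column; in (ii) you use Lemma \ref{3.2} and synchronize the transfer times by passing to the product $N=N_1\cdots N_m$, which is exactly the paper's device of repeating each symbol $l_i$ times (your "insert unconstrained symbols" phrasing is the same trick). No gaps.
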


\begin{proof}  (i)
Let $\epsilon>0$. Then there exists $M_{\epsilon/2}\geq 1$ such that $(X,f_{0,\infty})$ satisfies the definition
of specification property. Fix any $k\geq2$, any $\mu_1,\cdots,\mu_k\in\mathcal{M}(X)$, and any $2k$ non-negative integers
$a_1\leq b_1<a_2\leq b_2<\cdots<a_k\leq b_k$ with $a_{i}-b_{i-1}\geq M_{\epsilon/2}$, $2\leq i\leq k$.
By the uniform continuity of $\hat{f}_{0}^{m}$, there exists
$\eta>0$ such that for any $\mu,\nu\in\mathcal{M}(X)$,
\begin{align}\label{651}
\mathcal{P}_{d}(\mu,\nu)<\eta\Rightarrow\mathcal{P}_{d}(\hat{f}_{0}^{m}(\mu),\hat{f}_{0}^{m}(\nu))<\epsilon/2, \;1\leq m\leq b_k.
\end{align}
By Lemma \ref{basic result} (ii), there exists $n_0\geq 1$ such that  $\nu_i=\frac{1}{n_0}\sum\limits_{j=1}^{n_0}\delta_{x_j^{(i)}}\in B_{\mathcal{P}_{d}}(\mu_i,\eta)$, $1\leq i\leq k$,
for some  $x_{1}^{(i)},\cdots,x_{n_0}^{(i)}\in X$.
It follows from (\ref{651}) that
\begin{align}\label{652}
\mathcal{P}_{d}\big(\hat{f}_{0}^{m}(\mu_i),\hat{f}_{0}^{m}(\nu_i)\big)<\epsilon/2, \;1\leq m\leq b_k,\;1\leq i\leq k.
\end{align}
By the specification property of $(X,f_{0,\infty})$, for any $1\leq j\leq n_0$, there exists $z_j\in X$ such that
for any $1\leq i\leq k$, we have
\begin{align}\label{653}
d\big(f_{0}^{m}(z_j),f_{0}^{m}(x_j^{(i)})\big)\leq\epsilon/2,\;a_i\leq m\leq b_i.
\end{align}
Denote $\hat{\mu}=\frac{1}{n_0}\sum_{j=1}^{n_0}\delta_{z_j}$. By (\ref{653}) we have
\begin{align}\label{654}
\mathcal{P}_{d}\big(\hat{f}_{0}^{m}(\hat{\mu}),\hat{f}_{0}^{m}(\nu_i)\big)
=\mathcal{P}_{d}\left(\frac{1}{n_0}\sum_{j=1}^{n_0}\delta_{f_{0}^{m}(z_j)},\frac{1}{n_0}\sum_{j=1}^{n_0}\delta_{f_{0}^{m}(x_j^{(i)})}\right)
\leq\epsilon/2,\;a_i\leq m\leq b_i
\end{align}
for $1\leq i\leq k$.
It follows from (\ref{652}) and (\ref{654}) that for $1\leq i\leq k$,
\begin{align*}
\mathcal{P}_{d}(\hat{f}_{0}^{m}(\hat{\mu}),\hat{f}_{0}^{m}(\mu_i))\leq
\mathcal{P}_{d}(\hat{f}_{0}^{m}(\mu_i),\hat{f}_{0}^{m}(\nu_i))+\mathcal{P}_{d}(\hat{f}_{0}^{m}(\hat{\mu}),\hat{f}_{0}^{m}(\nu_i))
<\epsilon,\;a_i\leq m\leq b_i.
\end{align*}
Therefore, $(\mathcal{M}(X),\hat{f}_{0,\infty})$ has the specification property.

 (ii)
Fix any two nonempty open subsets $\mathcal{U}_{1},\mathcal{U}_{2}$ of $\mathcal{M}(X)$.
By Lemma \ref{3.2}, there exists $n_1\geq 1$ such that for any $1\leq j\leq 2$,
there exist $n_1$ nonempty open subsets $U_{1}^{j},\cdots,U_{n_1}^{j}$ of $X$ such that
$\frac{1}{n_1}\sum_{i=1}^{n_1}\delta_{y_i^{j}}\in \mathcal{U}_{j}$ for
$y_{i}^{j}\in U_{i}^{j}$ and $1\leq i\leq n_1$. Fix  $1\leq i\leq n_1$.
Since $(X,f_{0,\infty})$ has property $P$, there exists $N_i\in \mathbf{Z^{+}}$ satisfying the definition of property $P$
for $U_{i}^{1}$ and $U_{i}^{2}$. Fix any $k\geq2$ and any sequence $s=(s_1,s_2,\cdots,s_k)\in \Sigma_{2}^{k}$.
Denote
\begin{align*}
l_i=N_1\times\cdots \times N_{i-1}\times N_{i+1}\times\cdots \times N_{n_1},\;N=N_1\times\cdots\times N_{n_1},
\end{align*}
and
\begin{align*}
\hat{s}^{(i)}=(\underbrace{s_1,\cdots,s_1}_{l_i},\underbrace{s_2,\cdots,s_2}_{l_i},\cdots,\underbrace{s_k,\cdots,s_k}_{l_i})\in \Sigma_{2}^{kl_i}.
\end{align*}
Then there exists $z_i\in X$ such that
\begin{align*}
z_i\in U_{i}^{s_1}, f^{N}_{0}(z_i)\in U_{i}^{s_2},\cdots,f^{(k-1)N}_{0}(z_i)\in U_{i}^{s_k}.
\end{align*}
By defining  $\mu=\frac{1}{n_1}\sum_{i=1}^{n_1}\delta_{z_i}$, we have
\begin{align*}
\mu\in\mathcal{U}_{s_1},\hat{f}_{0}^{N}(\mu)=\frac{1}{n_1}\sum_{i=1}^{n_1}\delta_{f_{0}^{N}(z_i)}\in\mathcal{U}_{s_2},\cdots,
\hat{f}_{0}^{(k-1)N}(\mu)=\frac{1}{n_1}\sum_{i=1}^{n_1}\delta_{f_{0}^{(k-1)N}(z_i)}\in\mathcal{U}_{s_k}.
\end{align*}
Therefore, $(\mathcal{M}(X),\hat{f}_{0,\infty})$ has property $P$.
\end{proof}
The proof of Proposition \ref{specification property-property P} (i) is motivated by \cite{Bauer}. It is  interesting to prove or disprove the converse of Proposition \ref{specification property-property P}.
Finally, we  give  another difference between $(X,f)$ and $(\mathcal{M}(X),\hat f)$.

\begin{proposition}\label{shadowing-difference}
Assume that $f:X\to X$ is surjective. Then,

{\rm(i)} there exists  $(X,f)$ such that it has shadowing but does not  have average shadowing or specification property;

{\rm(ii)} shadowing of  $(\mathcal{M}(X),\hat f)$  implies average shadowing and specification property of $(\mathcal{M}(X), $ $\hat f)$.
\end{proposition}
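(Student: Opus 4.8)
The plan splits into the two independent parts (i) and (ii). For (i) I would reuse the two-point system of Theorem~\ref{discrete-example}: $X=\{a,b\}$ with the discrete metric and $f(a)=b$, $f(b)=a$, a bijection (hence surjective). As observed there, for $\tilde\delta=1/2$ the only $\tilde\delta$-pseudo orbits of $(X,f)$ are the two genuine orbits, so $(X,f)$ has shadowing. It has no specification property: fix $\epsilon=1/2$, so under the discrete metric the condition $d(f^n(z),f^n(x_i))\le\epsilon$ forces $f^n(z)=f^n(x_i)$; for any proposed constant $M_\epsilon$, take $k=2$, $x_1=a$, $x_2=b$, $a_1=b_1=0$ and $a_2=b_2$ an even integer $\ge M_\epsilon$. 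A tracing point $z$ would satisfy $z=f^0(z)=a$ and $f^{a_2}(z)=f^{a_2}(b)$, and since $a_2$ is even this yields $a=b$, a contradiction.

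For the failure of average shadowing, fix $\epsilon_0=1/2$. Given $\delta>0$, choose odd integers $N_1<N_2<\cdots$ with $N_1>4/\delta$ and $N_{k+1}>100(N_1+\cdots+N_k)$, set $p_1=0$, $p_{k+1}=p_k+N_k$, and on each block $[p_k,p_{k+1})$ let $\{x_i\}_{i\ge0}$ run through $a,b,a,b,\dots$ starting from $a$ at time $p_k$. Inside a block every step is a genuine orbit step, and exactly one deviation of size $1$ occurs at each block boundary; since consecutive deviation times differ by at least $N_1>4/\delta$, a short counting estimate shows $\{x_i\}$ is a $\delta$-average-pseudo orbit. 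The only candidate shadowing points are $a$ and $b$. As the $N_k$ are odd, the integers $p_k$ alternate in parity, so $d(f^i(a),x_i)=1$ for every $i$ in a block with $p_k$ odd; evaluating $\frac1n\sum_{i<n}d(f^i(a),x_i)$ along $n=p_{2m+1}$ and using $N_{2m}>100(N_1+\cdots+N_{2m-1})$ makes this average exceed $1/1.01>\epsilon_0$, and the symmetric computation rules out $b$. Hence $(X,f)$ has no average-shadowing property, completing (i).

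For (ii), I would upgrade shadowing of $(\mathcal{M}(X),\hat f)$ in stages, using results already established. Since $f$ is surjective, so is the induced map $\hat f$, whence $(\mathcal{M}(X),\hat f)$ is chain mixing by Theorem~\ref{chain mixing} (taken with $f_n\equiv f$). Combined with the shadowing hypothesis, the argument in the proof of Theorem~\ref{mixingshadowing} shows that $(\mathcal{M}(X),\hat f)$ is topologically mixing; equivalently, Theorem~\ref{mixingshadowing} gives mixing of $(X,f)$ and Theorem~\ref{mixing} transfers it to $(\mathcal{M}(X),\hat f)$. It then remains to record two classical facts about a compact system possessing the shadowing property: (a) if it is also topologically mixing it has the specification property (cover the space by finitely many $\delta/2$-balls, use mixing to join any two of them by a genuine orbit arc of every sufficiently large length, concatenate the prescribed orbit arcs with such bridging arcs, extend to the left using surjectivity of $\hat f$, and shadow the resulting $\delta$-pseudo orbit); and (b) if it is also chain mixing it has the average-shadowing property (the jumps of size at least the shadowing constant $\delta_\epsilon$ in a $\delta$-average-pseudo orbit lie on a set of upper density at most $\delta/\delta_\epsilon$, and replacing each such jump inside a fixed-length window by a $\delta_\epsilon$-chain of the same length produces a genuine $\delta_\epsilon$-pseudo orbit differing from the original on a set of density $<\epsilon/2$ once $\delta$ is small, whose $\epsilon/2$-shadowing point $\epsilon$-shadows the original in average). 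Applying (a) and (b) to $(\mathcal{M}(X),\hat f)$ completes (ii).

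The main obstacle is fact (b): whereas (a) is a routine concatenation argument, deducing average shadowing from chain mixing and shadowing requires the uniform density bound on the large jumps of a $\delta$-average-pseudo orbit together with a careful estimate of how much the repair perturbs the time averages; both (a) and (b) are in any case standard in the shadowing literature and may be quoted rather than reproved. In part (i) the only points needing care are checking that the block sequence is genuinely a $\delta$-average-pseudo orbit and that the super-geometric growth of the $N_k$ keeps the in-average tracing error bounded away from $0$ for each of the two candidate points.
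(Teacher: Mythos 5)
Your proposal is correct and follows the same skeleton as the paper: part (i) uses exactly the two-point swap $X=\{a,b\}$, $f(a)=b$, $f(b)=a$ from Theorem~\ref{discrete-example}, and part (ii) runs surjectivity of $f$ $\Rightarrow$ surjectivity of $\hat f$ $\Rightarrow$ chain mixing of $(\mathcal{M}(X),\hat f)$ (Theorem~\ref{chain mixing}), which together with shadowing yields topological mixing of $(\mathcal{M}(X),\hat f)$ as in Theorem~\ref{mixingshadowing}, after which one invokes the classical equivalences for surjective systems with shadowing. The only real difference is one of self-containedness: the paper disposes of the negative claims in (i) by citing Example 3 and Theorem 1 of \cite{KO12}, and of the positive claims in (ii) by citing Theorem 1 of \cite{KO12} (which, for a surjective map with shadowing, makes mixing, specification, and average shadowing equivalent), whereas you prove the (i) statements directly --- the parity argument with $a_2$ even kills specification, and your block construction with odd lengths $N_k$ growing super-geometrically is a valid $\delta$-average-pseudo orbit whose time averages against both candidate points $a$ and $b$ have $\limsup$ at least $100/101>1/2$, killing average shadowing --- and you sketch the standard proofs of the two implications needed in (ii). Both of your explicit verifications in (i) check out (the deviation set of the block sequence has gaps at least $N_1>4/\delta$, so the window averages are below $\delta$ for $n$ large), and your facts (a) and (b) in (ii) are indeed standard and legitimately quotable; so the argument is complete, just longer where the paper chooses to cite.
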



\begin{proof}  (i) Consider again the example in Theorem \ref{discrete-example} (i.e., $X=\{a,b\}$ with the discrete metric and $f(a)=b$, $f(b)=a$).  Clearly, $(X,f)$ has shadowing. However, Example 3 in \cite{KO12} indicates that it  does not  have average shadowing. Theorem 1 in \cite{KO12} shows that it does not have specification property.

(ii)
By Theorem \ref{mixingshadowing}, $(X,f)$ is  mixing. Then Theorem \ref{mixing} implies that $(\mathcal{M}(X),\hat f)$ is  mixing.  Thus, it follows from Theorem 1 in \cite{KO12} that $(\mathcal{M}(X),\hat f)$ has average shadowing and specification property.
\end{proof}
\section{Sensitivity}
Let $A\subset\mathbf{N}$. $A$ is  cofinite if $\mathbf{N}\setminus A$ is finite;
$A=\{n_k\}_{k=1}^{\infty}$ is  syndetic if there exists $l\geq 1$ such that
$n_{k+1}-n_{k}\leq l$ for all $k\geq 1$; $A$ is  ergodic if
$\limsup_{n\to\infty}|A\cap\{0,1,\cdots,n-1\}|/n$ is positive, where $|A|$ denotes the cardinality
of $A$. $(X,f_{0,\infty})$ is sensitive if there exists $\delta>0$ such that $N_{d}(x,\epsilon,\delta)\neq\emptyset$
for any $x\in X$ and $\epsilon>0$, where
\begin{align*}
N_{d}(x,\epsilon,\delta)=\{n\geq 1:{\rm there\; exsits}\;
y\in B_{d}(x,\epsilon)\;{\rm such\; that}\;d\big(f_{0}^{n}(x),f_{0}^{n}(y)\big)>\delta\};
\end{align*}
it is cofinitely sensitive if  there exists $\delta>0$ such that $N_{d}(x,\epsilon,\delta)$ is cofinite for  $x\in X$ and $\epsilon>0$; it is syndetically sensitive if there exists $\delta>0$ such that
$N_{d}(x,\epsilon,\delta)$ is syndetic for $x\in X$ and $\epsilon>0$; it is ergodically sensitive if  there exists $\delta>0$ such that $N_{d}(x,\epsilon,\delta)$ is ergodic for $x\in X$ and $\epsilon>0$;
it is multi-sensitive if there exists $\delta>0$ such that  $\bigcap_{i=1}^{k}N_{d}(x_i,\epsilon,\delta)\neq\emptyset$ for $\epsilon>0$,  $k\geq 1$ and  $x_1,\cdots,x_k
\in X$; it is Li-Yorke sensitive if there exists $\delta>0$ such that for any $x\in X$ and $\epsilon>0$, there exists $y\in B_d(x,\epsilon)$ such that $(x,y)$ is a Li-Yorke $\delta$-pair (i.e.
$\liminf_{n\to\infty}d(f_{0}^{n}(x),f_{0}^{n}(y))=0$ and $\limsup_{n\to\infty}d(f_{0}^{n}(x), f_{0}^{n}(y))\geq\delta$).
Here, $\delta$ is a sensitivity constant.
Clearly, cofinite sensitivity implies multi-sensitivity and syndetic sensitivity, and syndetic sensitivity
implies ergodic sensitivity.

The following lemma will be needed.
\begin{lemma}\label{sen}
Let $x\in X$ and $0<\epsilon<\delta$. Then $N_{\mathcal{P}_{d}}(\delta_{x},\epsilon,\delta)
\subset N_{d}(x,\epsilon,\delta/2)$.
\end{lemma}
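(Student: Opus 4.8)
The plan is to decode both ``witness sets'' against the Strassen form (\ref{measure-metric-def}) of the Prohorov metric, exploiting that $\hat f_0^{\,n}(\delta_x)=\delta_{f_0^{\,n}(x)}$ is again a point mass, so the hypothesis is really a statement about the distance from a Dirac measure to a general measure --- something (\ref{measure-metric-def}) makes transparent. Throughout I write $z:=f_0^{\,n}(x)$, and I use that for a continuous map the sets $B_d(x,\epsilon)$ and $f_0^{-n}(B_d(z,\delta))$ are Borel.

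First I would fix $n\in N_{\mathcal P_d}(\delta_x,\epsilon,\delta)$ and choose a witness $\nu\in B_{\mathcal P_d}(\delta_x,\epsilon)$ with $\mathcal P_d(\delta_z,\hat f_0^{\,n}(\nu))>\delta$. Since the set of admissible parameters in (\ref{measure-metric-def}) is a ray, $\delta$ failing to belong to it produces a Borel set $A$ with $\delta_z(A)>\hat f_0^{\,n}(\nu)(A^\delta)+\delta$. As $\delta_z$ is $\{0,1\}$-valued and $\delta>0$, this forces $\delta_z(A)=1$, i.e. $z\in A$, and $\hat f_0^{\,n}(\nu)(A^\delta)<1-\delta$. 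Because $z\in A$ we have $B_d(z,\delta)\subset A^\delta$, hence
\[
\nu\big(f_0^{-n}(B_d(z,\delta))\big)=\hat f_0^{\,n}(\nu)\big(B_d(z,\delta)\big)\le \hat f_0^{\,n}(\nu)(A^\delta)<1-\delta .
\]

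Next I would feed the other half of the hypothesis, $\nu\in B_{\mathcal P_d}(\delta_x,\epsilon)$, into (\ref{measure-metric-def}) with the singleton $\{x\}$ to get $1=\delta_x(\{x\})\le\nu(B_d(x,\epsilon))+\epsilon$, that is $\nu(B_d(x,\epsilon))\ge 1-\epsilon$. Using $\epsilon<\delta$ and subtracting the two mass estimates,
\[
\nu\big(B_d(x,\epsilon)\setminus f_0^{-n}(B_d(z,\delta))\big)\ \ge\ (1-\epsilon)-(1-\delta)\ =\ \delta-\epsilon\ >\ 0,
\]
so this set is nonempty; any $y$ in it satisfies $d(x,y)<\epsilon$ and $f_0^{\,n}(y)\notin B_d(z,\delta)$, i.e. $d(f_0^{\,n}(x),f_0^{\,n}(y))\ge\delta>\delta/2$, whence $n\in N_d(x,\epsilon,\delta/2)$. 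This is exactly the claimed inclusion.

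I do not expect a genuine obstacle: the argument is pure bookkeeping. The one point that needs care is the correct one-sided use of (\ref{measure-metric-def}) in both directions --- extracting the witnessing set $A$ from $\mathcal P_d>\delta$, and reading off a lower mass bound from $\mathcal P_d<\epsilon$ --- together with keeping the $(\cdot)^{\epsilon}$-enlargements oriented the right way. It is worth noting that the computation actually yields the stronger estimate $d(f_0^{\,n}(x),f_0^{\,n}(y))\ge\delta$; I would nonetheless keep the constant $\delta/2$ in the statement since it is all that is needed later and makes the inclusion robust.
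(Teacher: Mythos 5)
Your proof is correct. It uses the same measure-theoretic estimates as the paper's proof --- the lower bound $\nu(B_d(x,\epsilon))\ge 1-\epsilon$ coming from $\nu\in B_{\mathcal P_d}(\delta_x,\epsilon)$, and the comparison of $A^\delta$ with $B_d(f_0^n(x),\delta)$ --- but runs the logic in the opposite direction. The paper argues by contradiction: assuming $n\notin N_d(x,\epsilon,\delta/2)$, it obtains $B_d(x,\epsilon)\subset f_0^{-n}\big(B_d(f_0^n(x),\delta)\big)$ and then verifies $\hat f_0^n(\delta_x)(A)\le \hat f_0^n(\mu)(A^\delta)+\delta$ for \emph{every} Borel set $A$ (via a two-case analysis on whether $f_0^n(x)\in A$) and every $\mu$ in the ball, concluding $\mathcal P_d\le\delta$ and contradicting the choice of witness. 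You instead extract a single witnessing set $A$ from $\mathcal P_d\big(\delta_{f_0^n(x)},\hat f_0^n(\nu)\big)>\delta$, deduce $\nu\big(f_0^{-n}(B_d(f_0^n(x),\delta))\big)<1-\delta$, and exhibit a set of positive $\nu$-measure of points $y\in B_d(x,\epsilon)$ that do the separating. Your direct version avoids quantifying over all $A$ and, as you note, gives the sharper separation $d(f_0^n(x),f_0^n(y))\ge\delta$, i.e.\ the inclusion into $N_d(x,\epsilon,\delta')$ for any $\delta'<\delta$; the constant $\delta/2$ in the statement is only what is needed downstream.
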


\begin{proof}
Let $n\in N_{\mathcal{P}_{d}}(\delta_{x},\epsilon,\delta)$.
Suppose that $n\notin N_{d}(x,\epsilon,\delta/2)$. Then for any $y\in B_{d}(x,\epsilon)$,
we have
$d(f_{0}^{n}(x),f_{0}^{n}(y))\leq\delta/2.$
This implies that
\begin{align}\label{b}
B_{d}(x,\epsilon)\subset f_{0}^{-n}\big(\bar{B}_{d}(f_{0}^{n}(x),\delta/2)\big)
\subset f_{0}^{-n}\big(B_{d}(f_{0}^{n}(x),\delta)\big).
\end{align}
Note that
\begin{align}\label{c}
1=\delta_{x}(\{x\})\leq\mu(B_{d}(x,\epsilon))+\epsilon<\mu(B_{d}(x,\epsilon))+\delta
\end{align}
for  any fixed $\mu\in B_{\mathcal{P}_{d}}(\delta_x,\epsilon)$.
Let $A\in \mathcal{B}(X)$. If $f_{0}^{n}(x)\notin A$, then
\begin{align}\label{p-c1}
0=\delta_{f_{0}^{n}(x)}(A)=\hat{f}_{0}^{n}(\delta_x)(A)\leq\hat{f}_{0}^{n}(\mu)(A^{\delta})+\delta.
\end{align}
If $f_{0}^{n}(x)\in A$, then it follows from  (\ref{b}) and (\ref{c}) that
\begin{align}\label{p-c2}
\hat{f}_{0}^{n}(\delta_x)(A)<\mu(B_{d}(x,\epsilon))+\delta
\leq\mu\big(f_{0}^{-n}(B_{d}(f_{0}^{n}(x),\delta))\big)+\delta\leq
\hat{f}_{0}^{n}(\mu)(A^{\delta})+\delta.
\end{align}
Combining (\ref{p-c1})--(\ref{p-c2}), we have $\mathcal{P}_{d}(\hat{f}_{0}^{n}(\delta_x),\hat{f}_{0}^{n}(\mu))\leq\delta$, which contradicts
the fact that $n\in N_{\mathcal{P}_{d}}(\delta_{x},\epsilon,\delta)$.
\end{proof}

Lemma \ref{sen} implies the following result.

\begin{proposition}\label{mx}
If $(\mathcal{M}(X),\hat f_{0,\infty})$ is sensitive {\rm(}reps., cofinitely sensitive, multi-sensitive,
syndetically  sensitive, and ergodically  sensitive{\rm)}, then so is $(X,f_{0,\infty})$.
\end{proposition}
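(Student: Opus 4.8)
The plan is to derive all five implications in one stroke from Lemma~\ref{sen}, together with the elementary fact that the families of cofinite, syndetic and ergodic subsets of $\mathbf{N}$ are each closed under passing to supersets. Suppose $(\mathcal{M}(X),\hat f_{0,\infty})$ is sensitive in one of the five senses, with sensitivity constant $\delta_0>0$; I would show that $\delta_0/2$ is a sensitivity constant for $(X,f_{0,\infty})$ in the corresponding sense.

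First I would treat plain sensitivity. Fix $x\in X$ and $\epsilon>0$. Since $B_d(x,\epsilon')\subset B_d(x,\epsilon)$ whenever $\epsilon'\le\epsilon$, the set $N_d(x,\epsilon,\delta)$ is nondecreasing in its second argument, so it is enough to exhibit one point of $N_d(x,\epsilon',\delta_0/2)$ for some $0<\epsilon'<\min\{\epsilon,\delta_0\}$. For such an $\epsilon'$, applying the sensitivity of $(\mathcal{M}(X),\hat f_{0,\infty})$ to the Dirac measure $\delta_x\in\mathcal{M}(X)$ yields $N_{\mathcal{P}_d}(\delta_x,\epsilon',\delta_0)\neq\emptyset$, and since $0<\epsilon'<\delta_0$, Lemma~\ref{sen} gives $N_{\mathcal{P}_d}(\delta_x,\epsilon',\delta_0)\subset N_d(x,\epsilon',\delta_0/2)$. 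Hence $N_d(x,\epsilon,\delta_0/2)\supset N_d(x,\epsilon',\delta_0/2)\neq\emptyset$, so $(X,f_{0,\infty})$ is sensitive.

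The four refined notions follow by the same reduction, only reading a stronger conclusion off the inclusion of Lemma~\ref{sen}. With $x,\epsilon,\epsilon'$ as above: if $N_{\mathcal{P}_d}(\delta_x,\epsilon',\delta_0)$ is cofinite (resp. syndetic, resp. ergodic), then so is its superset $N_d(x,\epsilon',\delta_0/2)$, because each of these classes is upward closed --- for ergodicity this uses the monotonicity of $A\mapsto\limsup_{n\to\infty}|A\cap\{0,1,\dots,n-1\}|/n$. For multi-sensitivity, given $x_1,\dots,x_k\in X$ and $\epsilon>0$, pick $0<\epsilon'<\min\{\epsilon,\delta_0\}$ and apply multi-sensitivity of $(\mathcal{M}(X),\hat f_{0,\infty})$ to $\delta_{x_1},\dots,\delta_{x_k}$ to get $\bigcap_{i=1}^{k}N_{\mathcal{P}_d}(\delta_{x_i},\epsilon',\delta_0)\neq\emptyset$; applying Lemma~\ref{sen} to each index and intersecting gives
\begin{align*}
\emptyset\neq\bigcap_{i=1}^{k}N_{\mathcal{P}_d}(\delta_{x_i},\epsilon',\delta_0)\subset\bigcap_{i=1}^{k}N_d(x_i,\epsilon',\delta_0/2)\subset\bigcap_{i=1}^{k}N_d(x_i,\epsilon,\delta_0/2),
\end{align*}
so $(X,f_{0,\infty})$ is multi-sensitive with constant $\delta_0/2$.

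I do not expect a genuine obstacle here: the whole argument rests on Lemma~\ref{sen}, and the only points needing any care are (a) shrinking $\epsilon$ to some $\epsilon'<\delta_0$ so that the hypothesis $0<\epsilon<\delta$ of Lemma~\ref{sen} is met, which is harmless by the monotonicity of $N_d(x,\cdot,\delta)$, and (b) the observation that ``cofinite'', ``syndetic'' and ``ergodic'' are upward-closed families of subsets of $\mathbf{N}$, so that the one-sided inclusion of Lemma~\ref{sen} is enough to transfer them.
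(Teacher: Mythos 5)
Your proposal is correct and is exactly the argument the paper intends: the paper states Proposition~\ref{mx} with only the remark that it follows from Lemma~\ref{sen}, and your write-up supplies precisely the omitted details (applying the lemma to Dirac measures, shrinking $\epsilon$ below $\delta_0$, monotonicity of $N_d(x,\cdot,\delta)$ in the radius, and upward-closure of the cofinite/syndetic/ergodic families, with intersection for multi-sensitivity).
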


The next two theorems show that cofinite sensitivity (resp., multi-sensitivity) of $(X,f_{0,\infty})$ is equivalent to that of  $(\mathcal{M}(X),\hat f_{0,\infty})$. Theorem \ref{cofinite} has interesting applications to interval autonomous systems, see Theorem \ref{sensitive equivalent interval}.

\begin{theorem}\label{cofinite}
$(X,f_{0,\infty})$ is cofinitely sensitive if and only if $(\mathcal{M}(X),\hat f_{0,\infty})$ is cofinitely sensitive.
\end{theorem}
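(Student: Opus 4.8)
The plan is to handle the two implications separately. The implication ``$(\mathcal{M}(X),\hat f_{0,\infty})$ cofinitely sensitive $\Rightarrow$ $(X,f_{0,\infty})$ cofinitely sensitive'' is already Proposition \ref{mx} (via Lemma \ref{sen}), so everything lies in the converse. Assume $(X,f_{0,\infty})$ is cofinitely sensitive with constant $\delta\in(0,1)$. I would first record the \emph{uniform} form of this hypothesis, which is what makes the transfer possible (and which fails for mere sensitivity): for every $\rho>0$ there is $N_\rho\ge 1$ such that $\operatorname{diam}f_0^n\bigl(B_d(x,\rho)\bigr)>\delta$ for all $x\in X$ and all $n\ge N_\rho$. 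This follows by covering $X$ with finitely many balls $B_d(z_j,\rho/2)$, observing that $\operatorname{diam}f_0^n\bigl(\overline{B_d(z_j,\rho/2)}\bigr)>\delta$ whenever $n$ avoids the finite set $\bigl(\bigcap_jN_d(z_j,\rho/4,\delta)\bigr)^{c}$, and noting $\overline{B_d(z_j,\rho/2)}\subseteq B_d(x,\rho)$ when $x\in B_d(z_j,\rho/2)$. Hence, for $n\ge N_\rho$ and any $x$ there is $y\in B_d(x,\rho)$ with $d\bigl(f_0^n(x),f_0^n(y)\bigr)>\delta/2$.

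The heart of the matter — and the step I expect to be the main obstacle — is to prove cofinite sensitivity of $(\mathcal{M}(X),\hat f_{0,\infty})$ at a finitely supported measure $\mu'=\frac1k\sum_{i=1}^k\delta_{x_i}$ with a constant $\Delta_0$ that does not depend on $k$, $\mu'$, or $\varepsilon$. The naive perturbation — move every atom $x_i$ within $B_d(x_i,\varepsilon/3)$ to a sensitivity-witness $y_i$ — fails: even though $d(f_0^n(x_i),f_0^n(y_i))>\delta/2$ for all $i$, the empirical measures $\frac1k\sum\delta_{f_0^n(x_i)}$ and $\frac1k\sum\delta_{f_0^n(y_i)}$ can be Prohorov-close because a relabelling of the atoms may almost match them (this is precisely the mechanism by which plain sensitivity of $(X,f)$ need not pass to $(\mathcal{M}(X),\hat f)$). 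I would get around this as follows. Fix a finite cover of $X$ by balls of radius $\delta/16$; there are $N_0=N(X,\delta/16)<\infty$ of them. For $n\ge N_{\varepsilon/3}$, by pigeonhole one of these balls, say $B(v,\delta/16)$, contains at least $k/N_0$ of the points $f_0^n(x_1),\dots,f_0^n(x_k)$; let $C$ be the corresponding index set. Now move \emph{only} the atoms in $C$: set $\nu^{(n)}=\frac1k\bigl(\sum_{i\in C}\delta_{y_i}+\sum_{i\notin C}\delta_{x_i}\bigr)$ with $y_i\in B_d(x_i,\varepsilon/3)$ and $d(f_0^n(x_i),f_0^n(y_i))>\delta/2$, so $\mathcal{P}_d(\mu',\nu^{(n)})\le\varepsilon/3<\varepsilon$. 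Each $f_0^n(y_i)$ ($i\in C$) then lies at distance $>\delta/2-\delta/16$ from $v$, so $\hat f_0^n\nu^{(n)}$ has a ``hole'' around $v$ of size $\ge|C|/k\ge 1/N_0$ relative to $\hat f_0^n\mu'$. A second pigeonhole — choosing a radius $r\in[\delta/16,\delta/8)$ so that the annulus $B(v,r+s)\setminus B(v,r)$ of width $s$ carries few of the $f_0^n(x_i)$ — lets one take $A=B(v,r)$ and conclude $\hat f_0^n\mu'(A)-\hat f_0^n\nu^{(n)}(A^{s})\ge \tfrac1{N_0}-\tfrac{16s}{\delta}>s$ for a suitable $s\asymp\delta/N_0$, hence $\mathcal{P}_d\bigl(\hat f_0^n\mu',\hat f_0^n\nu^{(n)}\bigr)\ge s$. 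Thus $N_{\mathcal{P}_d}(\mu',\varepsilon/3,2\Delta_0)$ is cofinite with $2\Delta_0\asymp\delta/N(X,\delta/16)$.

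To pass from finitely supported measures to all of $\mathcal{M}(X)$ I would use density (Lemma \ref{basic result}(ii)) together with a triangle-inequality dichotomy. Given $\mu$ and $\varepsilon$, pick $\mu'\in\mathcal{M}_{\infty}(X)$ with $\mathcal{P}_d(\mu,\mu')<\varepsilon/3$. For every $n\in N_{\mathcal{P}_d}(\mu',\varepsilon/3,2\Delta_0)$ (a cofinite set by the previous step) there is $\nu'$ with $\mathcal{P}_d(\mu',\nu')<\varepsilon/3$ and $\mathcal{P}_d(\hat f_0^n\mu',\hat f_0^n\nu')>2\Delta_0$; then either $\mathcal{P}_d(\hat f_0^n\mu,\hat f_0^n\mu')>\Delta_0$, and $\nu=\mu'$ works, or $\mathcal{P}_d(\hat f_0^n\mu,\hat f_0^n\mu')\le\Delta_0$, in which case $\mathcal{P}_d(\hat f_0^n\mu,\hat f_0^n\nu')>\Delta_0$ with $\mathcal{P}_d(\mu,\nu')<\varepsilon$. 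In both cases $n\in N_{\mathcal{P}_d}(\mu,\varepsilon,\Delta_0)$, so this set is cofinite, and $(\mathcal{M}(X),\hat f_{0,\infty})$ is cofinitely sensitive with constant $\Delta_0$.

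The one place requiring genuine care is the empirical-measure step: the cure for the rearrangement obstruction is to displace a whole pigeonhole-cluster of atoms \emph{away} from where it sits — the hole left behind, rather than anything about where the atoms go, is the detectable imbalance — and then to check that both pigeonhole estimates yield a sensitivity constant depending only on $(X,d)$ and $\delta$, not on the number of atoms.
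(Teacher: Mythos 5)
Your proof is correct and follows essentially the same strategy as the paper's: reduce to an empirical measure $\frac1k\sum\delta_{x_i}$ by density plus a triangle-inequality dichotomy, then pigeonhole the image atoms into a finite cover by sets of diameter small relative to $\delta$, displace the clustered atoms using cofinite sensitivity of the base system, and detect the resulting mass deficit in the Prohorov metric with a constant depending only on $(X,d)$ and $\delta$. The only real difference is local: the paper perturbs \emph{every} atom whose image lies in the slightly enlarged target set $V_{m_0}^{\delta_0}$ (so the perturbed measure gives that enlarged set mass exactly zero and no annulus/second-pigeonhole step is needed), whereas you perturb only the cluster and pay for the boundary leakage of the unmoved atoms with the annulus argument --- both variants yield the same kind of uniform sensitivity constant, so your extra step is harmless but avoidable.
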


\begin{proof}
It suffices to show the necessity by Proposition \ref{mx}.
Fix  $\mu\in \mathcal{M}(X)$ and $\epsilon>0$. Lemma \ref{basic result} (ii) implies that there exists
$\nu\triangleq\frac{1}{n_0}\sum\limits_{j=1}^{n_0}\delta_{x_{j}}\in B_{\mathcal{P}_{d}}(\mu,\epsilon/2)$
for some $n_0\in \mathbf{Z^{+}}$ and $x_{1},\cdots,x_{n_0}\in X$. Since $(X,f_{0,\infty})$
is cofinitely sensitive  with sensitivity constant $\delta>0$, there exists $N_0\geq 1$ such that
\begin{align}\label{411}
[N_0,+\infty)\subset\bigcap_{j=1}^{n_0}N_{d}(x_j,\epsilon/2,\delta).
\end{align}
Fix $k\geq N_0$. It follows from (\ref{411}) that for any $1\leq j\leq n_0$, there exists
$y_{j}\in B_{d}(x_{j},\epsilon/2)$ such that
\begin{align}\label{412}
d(f_{0}^{k}(x_{j}),f_{0}^{k}(y_{j}))>\delta.
\end{align}
Since $X$ is compact, there exists a finite open cover $\{V_m\}_{m=1}^{s}$ with
\begin{align}\label{413}
d(V_m)<\delta/2,\;\;1\leq m\leq s.
\end{align}
Let $\delta_0=\min\{\delta/8,1/s\}$. It can be shown that $(\mathcal{M}(X),\hat f_{0,\infty})$
is cofinitely sensitive with sensitivity constant $\delta_1<\delta_0/2$. Indeed, we have
\begin{align*}
1=\nu(X)=\nu\left(\bigcup_{m=1}^{s}f_{0}^{-k}(V_m)\right)\leq\sum_{m=1}^{s}\nu\big(f_{0}^{-k}(V_m)\big)=\sum_{m=1}^s\hat f_0^k(\nu)(V_m).
\end{align*}
Thus, there exists $1\leq m_0\leq s$ such that
\begin{align}\label{414}
\hat{f}_{0}^{k}(\nu)(V_{m_0})=\nu(f_0^{-k}(V_{m_0}))
=\frac{1}{n_0}\sum_{j=1}^{n_0}\delta_{x_{j}}\big(f_{0}^{-k}(V_{m_0})\big)
\geq\frac{1}{s}\geq\delta_0.
\end{align}
Then there exists $1\leq j_0\leq n_0$ such that $x_{j_0}\in f_{0}^{-k}(V_{m_0})\subset f_{0}^{-k}(V_{m_0}^{\delta_0})$,
which means that
\begin{align}\label{def-A}
A\triangleq\{1\leq j\leq n_0: x_{j}\in f_{0}^{-k}(V_{m_0}^{\delta_0})\}\neq\emptyset.
\end{align}
Fix $j\in A$. By (\ref{412})--(\ref{413}), we have for any $z\in V_{m_0}^{\delta_0}$,
\begin{align*}
d(f_{0}^{k}(y_{j}),z)\geq d(f_{0}^{k}(x_{j}),f_{0}^{k}(y_{j}))-d(f_{0}^{k}(x_{j}),z)
\geq\delta-d(V_{m_0}^{\delta_0})\geq\frac{\delta}{4},
\end{align*}
and thus
\begin{align}\label{415}
f_{0}^{k}(y_{j})\notin V_{m_0}^{\delta_0},\;\forall\;j\in A.
\end{align}
Define
\begin{align}\label{416}
\hat{\nu}\triangleq\frac{1}{|A|}\sum_{j\in A}\delta_{y_{j}}+\frac{1}{n_0-|A|}
\sum_{j\in\{1,\cdots,n_0\}\setminus A}\delta_{x_{j}}.
\end{align}
Let $B\in \mathcal{B}(X)$. If $x_{j}\in B$ for some $1\leq j\leq n_0$, then $d(y_{j},B)\leq d(y_{j}, x_{j})<\epsilon/2$
and thus $y_{j}\in B^{\epsilon/2}$. Consequently,
 \begin{align*}\nu(B)={1\over n_0}\sum_{j=1}^{n_0}\delta_{x_j}(B)\leq{1\over n_0}\sum_{j\in A}\delta_{y_j}(B^{\epsilon/2})+
 {1\over n_0}\sum_{j\in\{1,\cdots,n_0\}\setminus A}\delta_{x_j}(B^{\epsilon/2})\leq
 \hat{\nu}(B^{\epsilon/2})+\epsilon/2,\end{align*}
which implies  $\mathcal{P}_{d}(\nu,\hat{\nu})<\epsilon/2$.
Thus,
$
\mathcal{P}_{d}(\mu,\hat{\nu})\leq \mathcal{P}_{d}(\mu,\nu)+\mathcal{P}_{d}(\nu,\hat{\nu})<\epsilon.
$
Moreover, it follows from (\ref{def-A})--(\ref{415}) that $y_j\notin f_0^{-k}(V_{m_0}^{\delta_0})$ for $j\in A$,
and $x_j\notin f_0^{-k}(V_{m_0}^{\delta_0})$ for
$j\in\{1,\cdots,n_0\}\setminus A.$ This, along with (\ref{414}) and (\ref{416}), gives   $\hat{f}_{0}^{k}(\hat{\nu})(V_{m_0}^{\delta_0})=0$ and
\begin{align*}
\hat{f}_{0}^{k}(\hat{\nu})(V_{m_0}^{\delta_0})+\delta_0=\delta_0\leq\hat{f}_{0}^{k}(\nu)(V_{m_0}),
\end{align*}
which yields
\begin{align*}
\mathcal{P}_{d}(\hat{f}_{0}^{k}(\hat{\nu}),\hat{f}_{0}^{k}(\nu))\geq\delta_0.
\end{align*}
Thus, $\mathcal{P}_{d}(\hat{f}_{0}^{k}(\mu),\hat{f}_{0}^{k}(\nu))\geq\delta_0/2>\delta_1$ or
$\mathcal{P}_{d}(\hat{f}_{0}^{k}(\mu),\hat{f}_{0}^{k}(\hat{\nu}))\geq\delta_0/2>\delta_1$. Hence,
$
[N_0,+\infty)\subset N_{\mathcal{P}_{d}}(\mu,\epsilon,\delta_1).
$
Therefore, $(\mathcal{M}(X),\hat f_{0,\infty})$ is cofinitely sensitive.
\end{proof}

\begin{theorem}\label{multi sensitive}
$(X,f_{0,\infty})$ is multi-sensitive if and only if $(\mathcal{M}(X),\hat f_{0,\infty})$ is multi-sensitive.
\end{theorem}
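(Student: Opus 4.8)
The sufficiency, namely that multi-sensitivity of $(\mathcal{M}(X),\hat f_{0,\infty})$ implies that of $(X,f_{0,\infty})$, is already contained in Proposition \ref{mx}, so the plan is to prove the necessity, which I would do by a direct adaptation of the argument for Theorem \ref{cofinite}. The only new feature is that several measures must be treated at once; the compensating simplification is that no infinite tail of return times is needed, just a single common return time, so this proof is if anything shorter than that of Theorem \ref{cofinite}.

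Assume $(X,f_{0,\infty})$ is multi-sensitive with sensitivity constant $\delta>0$. The first step is to fix, \emph{before} seeing any data, a finite open cover $\{V_m\}_{m=1}^{s}$ of the compact space $X$ with $d(V_m)<\delta/2$, and to set $\delta_0:=\min\{\delta/8,\,1/s\}$ and $\delta_1\in(0,\delta_0/2)$; the number $\delta_1$ will serve as a multi-sensitivity constant for $(\mathcal{M}(X),\hat f_{0,\infty})$, and the point of committing to the cover first is that $\delta_1$ then depends only on $\delta$, not on the particular measures or on $\epsilon$. Now take arbitrary $\mu_1,\dots,\mu_l\in\mathcal{M}(X)$ and $\epsilon>0$. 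Using Lemma \ref{basic result}(ii), choose $\nu_i=\frac{1}{n_i}\sum_{j=1}^{n_i}\delta_{x_j^{(i)}}\in B_{\mathcal{P}_d}(\mu_i,\epsilon/2)$ for $1\le i\le l$, and apply multi-sensitivity of $(X,f_{0,\infty})$ to the finite point set $\{x_j^{(i)}:1\le i\le l,\ 1\le j\le n_i\}$ with radius $\epsilon/2$ to obtain a single integer $k\ge1$ with $k\in N_d(x_j^{(i)},\epsilon/2,\delta)$ for all $i,j$; thus for every pair $i,j$ there is $y_j^{(i)}\in B_d(x_j^{(i)},\epsilon/2)$ with $d\big(f_0^k(x_j^{(i)}),f_0^k(y_j^{(i)})\big)>\delta$.

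The remaining step is to run the measure-theoretic part of the proof of Theorem \ref{cofinite} separately for each $i$. Since $\sum_{m=1}^s\hat f_0^k(\nu_i)(V_m)=\hat f_0^k(\nu_i)(X)=1$, there is $m_i$ with $\hat f_0^k(\nu_i)(V_{m_i})\ge 1/s\ge\delta_0$, hence $A_i:=\{1\le j\le n_i:x_j^{(i)}\in f_0^{-k}(V_{m_i}^{\delta_0})\}\neq\emptyset$. From $d(V_{m_i}^{\delta_0})\le d(V_{m_i})+2\delta_0<\delta$ together with $d(f_0^k(x_j^{(i)}),f_0^k(y_j^{(i)}))>\delta$ one gets $f_0^k(y_j^{(i)})\notin V_{m_i}^{\delta_0}$ for $j\in A_i$; replacing $x_j^{(i)}$ by $y_j^{(i)}$ for $j\in A_i$ (and keeping the other point masses) yields a measure $\hat\nu_i$ with $\mathcal{P}_d(\nu_i,\hat\nu_i)<\epsilon/2$, so $\mathcal{P}_d(\mu_i,\hat\nu_i)<\epsilon$, and with $\hat f_0^k(\hat\nu_i)(V_{m_i}^{\delta_0})=0$, which forces $\mathcal{P}_d(\hat f_0^k(\nu_i),\hat f_0^k(\hat\nu_i))\ge\delta_0$ by \eqref{measure-metric-def}. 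By the triangle inequality, for each $i$ at least one of $\mathcal{P}_d(\hat f_0^k(\mu_i),\hat f_0^k(\nu_i))$ and $\mathcal{P}_d(\hat f_0^k(\mu_i),\hat f_0^k(\hat\nu_i))$ is $\ge\delta_0/2>\delta_1$, while $\nu_i$ and $\hat\nu_i$ both lie in $B_{\mathcal{P}_d}(\mu_i,\epsilon)$; hence $k\in N_{\mathcal{P}_d}(\mu_i,\epsilon,\delta_1)$ for every $i$, so $\bigcap_{i=1}^{l}N_{\mathcal{P}_d}(\mu_i,\epsilon,\delta_1)\ni k$ is nonempty and $(\mathcal{M}(X),\hat f_{0,\infty})$ is multi-sensitive, completing the proof.

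I expect no serious obstacle: the argument is bookkeeping layered on top of Theorem \ref{cofinite}. The one place that genuinely needs attention is the uniformity of $\delta_1$ — it must be independent of $l$, of the measures $\mu_i$, and of $\epsilon$ — which is precisely why the cover $\{V_m\}$ is chosen at the outset; and one should verify that multi-sensitivity of $(X,f_{0,\infty})$ really does supply one common $k$ for the whole finite collection of points $x_j^{(i)}$, which it does directly from the definition.
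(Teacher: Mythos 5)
Your proposal is correct and follows essentially the same route as the paper's proof: the converse via Proposition \ref{mx}, and the forward direction by the same cover-and-relocate argument from Theorem \ref{cofinite}, run once for each $\mu_i$ at a single common time $k$ supplied by multi-sensitivity of $(X,f_{0,\infty})$. Your explicit remark that the cover $\{V_m\}$ (hence $\delta_0$ and $\delta_1$) is fixed in advance and depends only on $\delta$ is exactly the uniformity the paper's argument relies on, just stated more carefully.
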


\begin{proof}
Suppose that $(X,f_{0,\infty})$ is multi-sensitive.
 Fix $k\geq1$, $\mu_1,\cdots,\mu_k\in \mathcal{M}(X)$ and
$\epsilon>0$. By Lemma \ref{basic result} (ii),  there exist $\nu_{i}\triangleq\frac{1}{n_i}\sum_{j=1}^{n_i}
\delta_{x_{ij}}\in B_{\mathcal{P}_{d}}(\mu_i,\epsilon/2)$, $1\leq i\leq k$, for some $n_i\geq 1$ and $x_{i1},\cdots,x_{in_i}\in X$. Since $(X,f_{0,\infty})$ is multi-sensitive with sensitivity constant $\delta>0$, there exists $n_0\geq 1$
such that for any $1\leq i\leq k$ and $1\leq j\leq n_i$, there exists $\hat{x}_{ij}\in B_{d}(x_{ij},\epsilon/2)$
satisfying that
\begin{align}\label{46}
d(f_{0}^{n_0}(x_{ij}),f_{0}^{n_0}(\hat{x}_{ij}))>\delta.
\end{align}
Since $X$ is compact, there exists a finite open cover $\{V_m\}_{m=1}^{s}$ with
$
d(V_m)<\delta/2$ for all $1\leq m\leq s.
$
Denote $\delta_0=\min\{\delta/8,1/s\}$ and fix $1\leq i\leq k$. Since
$
1=\nu_{i}(X)\leq\sum_{m=1}^{s}\nu_i\big(f_{0}^{-n_{0}}(V_m)\big),
$
there exists $1\leq m_0\leq s$ such that
\begin{align*}
\hat{f}_{0}^{n_{0}}\big(\nu_{i}(V_{m_0})\big)=\frac{1}{n_i}\sum_{j=1}^{n_i}\delta_{x_{ij}}\big(f_{0}^{-n_{0}}(V_{m_0})\big)
\geq\frac{1}{s}\geq\delta_0.
\end{align*}
Thus, there exists $1\leq j\leq n_i$ such that $x_{ij}\in f_{0}^{-n_0}(V_{m_0}^{\delta_0})$. Then
$
A_i\triangleq\{1\leq j\leq n_i: x_{ij}\in f_{0}^{-n_{0}}(V_{m_0}^{\delta_0})\}\neq\emptyset.
$
Fix $j\in A_i$ and $z\in V_{m_0}^{\delta_0}$. By (\ref{46}) and the fact  that $d(V_{m_0})<{\delta/2}$, we have
\begin{align*}
d(f_{0}^{n_0}(\hat{x}_{ij}),z)\geq d(f_{0}^{n_0}(x_{ij}),f_{0}^{n_0}(\hat{x}_{ij}))-d(f_{0}^{n_0}(x_{ij}),z)
\geq\delta-d(V_{m_0}^{\delta_0})\geq\frac{\delta}{4},
\end{align*}
which means
$
f_{0}^{n_0}(\hat{x}_{ij})\notin V_{m_0}^{\delta_0}$ for all $j\in A_i.
$
Denfine
\begin{align*}
\hat{\nu}_{i}:=\frac{1}{|A_i|}\sum_{j\in A_i}\delta_{\hat{x}_{ij}}+\frac{1}{n_i-|A_i|}
\sum_{j\in \{1,\cdots,n_i\}\setminus A_i}\delta_{x_{ij}}.
\end{align*}
For any $E\in \mathcal{B}(X)$,  $x_{ij}\in E$ implies $\hat{x}_{ij}\in E^{\epsilon/2}$. Then $\nu_{i}(E)\leq\hat{\nu}_{i}(E^{\epsilon/2})+\epsilon/2$,
and thus $\mathcal{P}_{d}(\nu_{i},\hat{\nu}_{i})<\epsilon/2$. So,
$
\mathcal{P}_{d}(\mu_i,\hat{\nu}_{i})\leq \mathcal{P}_{d}(\mu_i,\nu_{i})+\mathcal{P}_{d}(\nu_i,\hat{\nu}_i)<\epsilon.
$
Since $\hat{f}_{0}^{n_0}(\hat{\nu}_i)(V_{m_0}^{\delta_0})=0$, we have
$
\hat{f}_{0}^{n_0}(\hat{\nu}_i)(V_{m_0}^{\delta_0})+\delta_0=\delta_0\leq\hat{f}_{0}^{n_0}(\nu_i)(V_{m_0}),
$
which yields that
$
\mathcal{P}_{d}(\hat{f}_{0}^{n_0}(\hat{\nu}_i),\hat{f}_{0}^{n_0}(\nu_i))\geq\delta_0.
$
Thus,
\begin{align*}
\mathcal{P}_{d}(\hat{f}_{0}^{n_0}(\mu_i),\hat{f}_{0}^{n_0}(\nu_i))\geq\delta_0/2\;\;{\rm or}\;\;
\mathcal{P}_{d}(\hat{f}_{0}^{n_0}(\mu_i),\hat{f}_{0}^{n_0}(\hat{\nu}_i))\geq\delta_0/2.
\end{align*}
Hence, $(\mathcal{M}(X),\hat f_{0,\infty})$ is multi-sensitive with sensitivity constant $\delta_1<\delta_0/2$.

The converse is proved  by Proposition \ref{mx}.
\end{proof}

For the general autonomous system  on   compact metric space,
there exists $(X,f)$ such that it is sensitive but $(\mathcal{M}(X),\hat f)$ is not sensitive (see Example 5.1 in
  \cite{Li17}). When considering the interval autonomous system, however,  we get the following equivalent result, which is very different from the above general case.

\begin{theorem}\label{sensitive equivalent interval}
{\rm(i)} $(I,f)$ is sensitive
if and only if $(\mathcal{M}(I),\hat{f})$ is sensitive.

{\rm(ii)} $(I,f)$ is syndetically sensitive
if and only if $(\mathcal{M}(I),\hat{f})$ is  syndetically sensitive.

{\rm(iii)} $(I,f)$ is  ergodically sensitive
if and only if $(\mathcal{M}(I),\hat{f})$ is  ergodically sensitive.
\end{theorem}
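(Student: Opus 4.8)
The plan is to reduce all three equivalences to Theorem \ref{cofinite}, the elementary hierarchy among the sensitivity notions, and one structural input from one-dimensional dynamics. The ``only if'' implications in (i)--(iii) -- that sensitivity (resp.\ syndetic, ergodic sensitivity) of $(\mathcal{M}(I),\hat f)$ forces the same property for $(I,f)$ -- are instances of Proposition \ref{mx}, which holds on any compact metric space; so it remains to prove the three ``if'' directions, and this is where the one-dimensionality of $I$ enters.

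The crux is the fact that a sensitive continuous self-map of a compact interval is automatically \emph{cofinitely} sensitive: once the $\varepsilon$-ball about a point has been spread, under iteration, across a subinterval of definite length, sensitivity prevents the later images from permanently collapsing, and a compactness argument then produces a single sensitivity constant that works for all sufficiently large times and uniformly in the base point; this belongs to the structure theory of sensitive interval maps (see \cite{R17}). Granting it, the remaining work is short. If $(I,f)$ is sensitive, it is cofinitely sensitive, so by Theorem \ref{cofinite} $(\mathcal{M}(I),\hat f)$ is cofinitely sensitive, hence sensitive; with Proposition \ref{mx} this yields (i). For (ii), a syndetically sensitive system is in particular sensitive, hence cofinitely sensitive by the interval fact, hence $(\mathcal{M}(I),\hat f)$ is cofinitely sensitive by Theorem \ref{cofinite}; since every cofinite subset of $\mathbf{N}$ is syndetic, $(\mathcal{M}(I),\hat f)$ is syndetically sensitive, and Proposition \ref{mx} gives the converse. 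Part (iii) runs the same way: an ergodically sensitive system is sensitive (an ergodic set of witnessing times is nonempty), so the same chain makes $(\mathcal{M}(I),\hat f)$ cofinitely sensitive, and every cofinite set has upper density $1$ and is therefore ergodic, whence $(\mathcal{M}(I),\hat f)$ is ergodically sensitive.

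The implications cofinite $\Rightarrow$ syndetic $\Rightarrow$ ergodic $\Rightarrow$ sensitive are all trivial (the last because a nonempty set of witnessing times already gives sensitivity), and Theorem \ref{cofinite} together with Proposition \ref{mx} supplies the rest, so the only genuine obstacle is the implication ``sensitive $\Rightarrow$ cofinitely sensitive'' for interval maps. If one wanted to prove this in place rather than cite it, I would pass to the eventual range $\bigcap_{n\geq0}f^{n}(I)$, note that sensitivity forces it to be a nondegenerate invariant subinterval (otherwise the diameters of $f^{n}(I)$ tend to $0$ and sensitivity fails outright), and then extract, from the action of $f$ on this core -- which up to a finite cyclic decomposition into subintervals supports a locally eventually onto iterate of $f$ -- both the uniform sensitivity constant and the cofinite set of times. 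Making the ``spreading time'' uniform over all of $I$ and handling the endpoints carefully is the delicate point.
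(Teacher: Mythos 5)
Your proposal is correct and follows essentially the same route as the paper: both reduce everything to the fact that a sensitive interval map is cofinitely sensitive (the paper cites Theorem 2 of \cite{Moothathu} for this), then apply Theorem \ref{cofinite} to transfer cofinite sensitivity to $(\mathcal{M}(I),\hat f)$, use the hierarchy cofinite $\Rightarrow$ syndetic $\Rightarrow$ ergodic $\Rightarrow$ sensitive, and invoke Proposition \ref{mx} for the converse directions. The only divergence is your attribution of the key interval fact to \cite{R17} and the accompanying sketch, which is not needed since the result is available as a citation.
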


\begin{proof}
Necessity of (i)--(iii) is obtained by Proposition \ref{mx}. Now, we consider their sufficiency.

(i) Suppose that $(I,f)$ is sensitive. Then $(I,f)$ is cofinitely sensitive by Theorem 2 in \cite{Moothathu}. It follows from  Theorem \ref{cofinite} that
$(\mathcal{M}(I),\hat{f})$ is cofinitely sensitive, which implies that $(\mathcal{M}(I),\hat{f})$
is sensitive.

(ii)--(iii) Suppose that $(I,f)$ is syndetically  (resp., ergodically) sensitive. Then $(I,f)$ is sensitive. Again by Theorem 2 in \cite{Moothathu},
 $(I,f)$ is cofinitely sensitive. Theorem \ref{cofinite} shows that $(\mathcal{M}(I),\hat{f})$ is cofinitely sensitive. Thus, $(\mathcal{M}(I),\hat{f})$
is syndetically  (resp., ergodically) sensitive.
\end{proof}

It was proved in Theorem 4.1 of \cite{Li17} that Li-Yorke sensitivity of $(\mathcal{M}(X),\hat f)$ implies that of $(X,f)$. Now, it is generalized  to non-autonomous systems. It should be pointed out that there exists $(X,f)$ such that it is Li-Yorke sensitive but $(\mathcal{M}(X),\hat f)$ is not Li-Yorke sensitive (see
Theorem 5.2 in  \cite{Li17}).

\begin{proposition}\label{LiYorke}
If $(\mathcal{M}(X),\hat f_{0,\infty})$ is Li-Yorke sensitive, then so is $(X,f_{0,\infty})$.
\end{proposition}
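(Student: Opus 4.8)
The plan is to produce, for each $x\in X$ and $\varepsilon>0$, a point $y\in B_d(x,\varepsilon)$ with $y\ne x$ forming a Li-Yorke $(\delta/4)$-pair with $x$, where $\delta>0$ is the Li-Yorke sensitivity constant of $(\mathcal{M}(X),\hat f_{0,\infty})$; since $\delta/4$ will not depend on $x$ or $\varepsilon$, this is exactly Li-Yorke sensitivity of $(X,f_{0,\infty})$. First I would fix $x$, $\varepsilon$, pick $\varepsilon_1\in(0,\min\{\varepsilon,\delta/8\})$, and apply the hypothesis to $\mu=\delta_x$: there is $\nu\in B_{\mathcal{P}_d}(\delta_x,\varepsilon_1)$ with $(\delta_x,\nu)$ a Li-Yorke $\delta$-pair. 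Using $\hat f_0^n(\delta_x)=\delta_{f_0^n(x)}$ (iterate $\hat f_n(\delta_z)=\delta_{f_n(z)}$), this reads
\begin{align*}
\liminf_{n\to\infty}\mathcal{P}_d\big(\delta_{f_0^n(x)},\hat f_0^n(\nu)\big)=0,\qquad \limsup_{n\to\infty}\mathcal{P}_d\big(\delta_{f_0^n(x)},\hat f_0^n(\nu)\big)\ge\delta .
\end{align*}

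The workhorse is the dictionary between the Prohorov distance from a Dirac mass and concentration, obtained from \eqref{measure-metric-def} by testing the set $\{z\}$: if $\mathcal{P}_d(\delta_z,\eta)<\alpha$ then $\eta(B_d(z,\alpha))\ge1-\alpha$, hence $\int\min\{d(\cdot,z),1\}\,d\eta\le2\alpha$; and if $\mathcal{P}_d(\delta_z,\eta)\ge\beta$ then $\eta(X\setminus B_d(z,\beta/2))\ge\beta/2$. Applying the first part along a sequence $n_k\to\infty$ realizing the $\liminf$ gives $\int\min\{d(f_0^{n_k}(y),f_0^{n_k}(x)),1\}\,d\nu(y)\to0$; passing to a subsequence along which these integrals are summable and invoking Tonelli together with a Borel--Cantelli-type argument, I get $d(f_0^{n_k}(y),f_0^{n_k}(x))\to0$ for $\nu$-a.e.\ $y$, so the proximal cell $P:=\{y:\liminf_n d(f_0^n(y),f_0^n(x))=0\}$ satisfies $\nu(P)=1$. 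Applying the second part along a sequence $m_k\to\infty$ realizing the $\limsup$ (so $\mathcal{P}_d(\delta_{f_0^{m_k}(x)},\hat f_0^{m_k}(\nu))\ge\delta/2$ for large $k$), the closed sets $E_k:=\{y:d(f_0^{m_k}(y),f_0^{m_k}(x))\ge\delta/4\}$ satisfy $\nu(E_k)\ge\delta/4$; reverse Fatou then gives $\nu(\limsup_k E_k)\ge\limsup_k\nu(E_k)\ge\delta/4$, and every $y$ in $\limsup_k E_k$ satisfies $\limsup_n d(f_0^n(y),f_0^n(x))\ge\delta/4$.

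Finally I would intersect: since $\nu(P)=1$, $\nu(\limsup_k E_k)\ge\delta/4$ and $\nu(B_d(x,\varepsilon_1))\ge1-\varepsilon_1$, the set $P\cap(\limsup_k E_k)\cap B_d(x,\varepsilon_1)$ has $\nu$-measure at least $\delta/4-\varepsilon_1>0$ and is thus nonempty; any $y$ in it satisfies $d(x,y)<\varepsilon_1\le\varepsilon$, $y\ne x$ (because $x\notin E_k$ for every $k$), $\liminf_n d(f_0^n(x),f_0^n(y))=0$ and $\limsup_n d(f_0^n(x),f_0^n(y))\ge\delta/4$, i.e.\ $(x,y)$ is a Li-Yorke $(\delta/4)$-pair in $B_d(x,\varepsilon)$. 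I expect the main obstacle to be precisely the middle paragraph: converting the information that $\hat f_0^n(\nu)$ is (along suitable times) concentrated near, respectively detectably spread away from, $\delta_{f_0^n(x)}$ into a statement about one genuine orbit in $X$. The right move is to treat proximality as an \emph{almost everywhere} property of $\nu$ (via Tonelli / Borel--Cantelli) and the $\limsup$-separation as a \emph{positive-measure-infinitely-often} property (via reverse Fatou), so that the two are forced to overlap on a set of positive $\nu$-measure; everything else is the Prohorov-to-concentration dictionary and routine bookkeeping, including the easy measurability of $P$ and $\limsup_k E_k$.
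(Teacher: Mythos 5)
Your proof is correct and follows essentially the same route as the paper: apply the hypothesis to $\delta_x$, translate Prohorov-closeness to $\delta_{f_0^n(x)}$ into concentration of $\hat f_0^n(\nu)$, use a Borel--Cantelli argument to get a full-$\nu$-measure proximal set, a reverse-Fatou/limsup-set argument to get a positive-$\nu$-measure "separated infinitely often" set, and intersect both with a small ball around $x$. The only differences are cosmetic (you phrase the first step via Tonelli on $\int\min\{d,1\}\,d\nu$ where the paper uses a direct union bound on the preimage sets $D_t$, and the paper additionally passes to a closed subset $E_1$ by inner regularity, which is not actually needed).
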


\begin{proof}
Suppose that $(\mathcal{M}(X),\hat f_{0,\infty})$ is Li-Yorke sensitive with  sensitivity constant $2\delta>0$.
Let $x\in X$ and $0<\epsilon<\delta/2$. Then there exists $\mu\in B_{\mathcal{P}_{d}}(\delta_{x},\epsilon/2)$
such that $(\delta_{x},\mu)$ is a Li-Yorke $2\delta$-pair. Thus, there exist $\{n_i\}_{i=1}^{\infty}$ and
$\{m_i\}_{i=1}^{\infty}$ such that
\begin{align}\label{0415}
\mathcal{P}_{d}(\hat{f}_{0}^{n_i}(\delta_{x}),\hat{f}_{0}^{n_i}(\mu))<1/2^{i}\textrm{ and }\mathcal{P}_{d}(\hat{f}_{0}^{m_i}(\delta_{x}),\hat{f}_{0}^{m_i}(\mu))>\delta,\;i\geq1.
\end{align}
Then
\begin{align*}
1=\hat{f}_{0}^{n_i}(\delta_{x})(\{f_{0}^{n_i}(x)\})\leq\hat{f}_{0}^{n_i}(\mu)\left(B_d(f_{0}^{n_i}(x),1/2^{i})\right)
+1/2^{i},\;i\geq1,
\end{align*}
which yields that
\begin{align}\label{0417}
\mu\left(X\setminus f_{0}^{-n_{i}}\big(\bar{B}_d(f_{0}^{n_i}(x),1/2^{i})\big)\right)\leq1/2^{i},\;i\geq1.
\end{align}
Denote
\begin{align*}
D_{t}\triangleq\bigcap_{i=t}^{\infty}f_{0}^{-n_{i}}\left(\bar{B}_d\big(f_{0}^{n_i}(x),1/2^{i}\big)\right), \;t\geq1.
\end{align*}
Clearly, $D_t$ is closed for any $t\geq1$. By (\ref{0417}),
\begin{align*}
\mu(X\setminus D_t)\leq\sum_{i=t}^{+\infty}\mu\left(X\setminus f_{0}^{-n_{i}}\big(\bar{B}_d(f_{0}^{n_i}(x),1/2^i)\big)\right)
\leq\sum_{i=t}^{+\infty}1/2^{i}=1/2^{t-1}.
\end{align*}
So,
$
\mu(D_t)=1-\mu(X\setminus D_t)\geq1-1/2^{t-1}$ for all $t\geq1,
$
which implies that
\begin{align}\label{0419}
\lim_{t\to\infty}\mu(D_t)=1.
\end{align}
 By the second relation of (\ref{0415}), there exists $B_i\in\mathcal{B}(X)$ such that
$
\hat{f}_{0}^{m_i}(\delta_{x})(B_i)>\hat{f}_{0}^{m_i}(\mu)(B_i^{\delta})+\delta>0
$
for  $i\geq1$. Then $f_{0}^{m_i}(x)\in B_i$ and
$
1=\hat{f}_{0}^{m_i}(\delta_{x})(B_i)>\hat{f}_{0}^{m_i}(\mu)
(B_d(f_{0}^{m_{i}}(x),\delta))+\delta
$ for $i\geq1$.
Hence,
$
\mu\big(X\setminus f_{0}^{-m_{i}}(B_d(f_{0}^{m_{i}}(x),\delta))\big)>\delta$ for  $i\geq1.
$
Let
\begin{align*}
E\triangleq\bigcap_{t=1}^{\infty}\bigcup_{i=t}^{\infty}\left(X\setminus f_{0}^{-m_{i}}\big(B_d(f_{0}^{m_{i}}(x),\delta)\big)\right).
\end{align*}
Then
\begin{align*}
\mu(E)=\lim_{t\to\infty}\mu\left(\bigcup_{i=t}^{\infty}\left(X\setminus f_{0}^{-m_{i}}\big(B_d(f_{0}^{m_{i}}(x),\delta)\big)\right)\right)\geq\delta.
\end{align*}
By Theorem 6.1 in \cite{Walters}, there exists a closed subset $E_1\subset E$ such that
\begin{align}\label{0421}
\mu(E_1)\geq3\delta/4>3\epsilon/2.
\end{align}
Since $\mathcal{P}_{d}(\mu,\delta_{x})<\epsilon/2$, we have
$
1=\delta_{x}(\{x\})\leq\mu(B_d(x,\epsilon/2))+\epsilon/2.
$
Thus,
\begin{align}\label{0422}
\mu\big(B_d(x,\epsilon/2)\big)\geq1-\epsilon/2.
\end{align}
Denote
$
K_{t}\triangleq\bar{B}_d(x,\epsilon/2)\cap D_t\cap E_1$ for  $t\geq1.
$
Then, $K_{t}$ is closed and $K_{t}\subset B_d(x,\epsilon)$ for all $t\geq1$.
It follows from (\ref{0419})--(\ref{0422}) that there exists $N\geq 1$ such that $K_{t}\neq\emptyset$ for  $t\geq N$.
Let $y\in K_{t}$.  Then, $y\in D_t$ and thus
$
d(f_{0}^{n_i}(x),f_{0}^{n_i}(y))\leq1/2^{i}
$
for  $i\geq t\geq N$,
which proves that
\begin{align}\label{0423}
\liminf_{n\to\infty}d(f_{0}^{n}(x),f_{0}^{n}(y))=0.
\end{align}
On the other hand, $y\in K_t\subset E_1$  implies that there exists $i\geq t$ such that
$
d(f_{0}^{m_i}(x),f_{0}^{m_i}(y))\geq\delta
$
for $t\geq 1$.
Thus,
\begin{align}\label{0424}
\limsup_{n\to\infty}d(f_{0}^{n}(x),f_{0}^{n}(y))\geq\delta.
\end{align}
By (\ref{0423})--(\ref{0424}), $(x,y)$ is a Li-Yorke $\delta$-pair. Therefore,
$(X,f_{0,\infty})$ is Li-Yorke sensitive.
\end{proof}

\section{Topological sequence entropy, conjugacy and chaos}
Consider  non-autonomous system $(Y,g_{0,\infty})$, where $g_n: Y\to Y$ is a map on the compact metric space $(Y,\rho)$, $n\geq0$. $(X,f_{0,\infty})$ is said to be (topologically) $\{h_n\}_{n=0}^{\infty}$
-equi-semiconjugate to $(Y,g_{0,\infty})$ if there exists a sequence of equi-continuous and surjective maps $\{h_n\}_{n=0}^{\infty}$ from $X$ to $Y$
such that $h_{n+1}\circ f_n=g_n\circ h_n$ for all $n\geq0$.
$(X,f_{0,\infty})$ is   $\{h_n\}_{n=0}^{\infty}$
-equi-conjugate to $(Y,g_{0,\infty})$ if, in addition,
  $\{h_n^{-1}\}_{n=0}^{\infty}$ is  equi-continuous from $Y$ to $X$.
The  concept for  entropy is given in Subsection 2.1. A basic lemma related to  conjugacy and entropy is given as follows.

\begin{lemma}\label{entropy}
Let $(X,f_{0,\infty})$ be  $\{h_n\}_{n=0}^{\infty}$-equi-semiconjugate to $(Y,g_{0,\infty})$ and $A\subset\mathbf{Z^{+}}$ be  any increasing sequence.
Then $h_A(g_{0,\infty})\leq h_A(f_{0,\infty})$.
\end{lemma}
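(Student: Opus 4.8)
The plan is to lift $(n,\epsilon,A)$-separated sets from $Y$ to $X$ through the surjections $h_n$, and to control the resulting loss in the separation constant using the equi-continuity of the family $\{h_n\}_{n=0}^\infty$. Two preliminary observations set this up. First, from the intertwining relations $h_{n+1}\circ f_n=g_n\circ h_n$ one gets, by an immediate induction on $m$, that $h_m\circ f_0^m=g_0^m\circ h_0$ for every $m\geq0$. Second, since $X$ is compact and $\{h_n\}_{n=0}^\infty$ is equi-continuous, it is in fact uniformly equi-continuous, so for each $\epsilon>0$ there is $\delta=\delta(\epsilon)>0$ with the property that $d(x,x')<\delta$ implies $\rho(h_n(x),h_n(x'))<\epsilon$ for all $n\geq0$ simultaneously.

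Next I would fix an increasing sequence $A=\{a_i\}_{i=1}^\infty\subset\mathbf{Z^{+}}$, an integer $n\geq1$ and $\epsilon>0$, and take a maximal $(n,\epsilon,A)$-separated set $E\subset Y$ for $(Y,g_{0,\infty})$, so $\sharp E=s_n(\epsilon,A,g_{0,\infty},Y)$. Using surjectivity of $h_0$, pick for each $y\in E$ a point $x_y\in X$ with $h_0(x_y)=y$, and set $\widetilde E=\{x_y:y\in E\}$, which has the same cardinality as $E$. Given $y\neq y'$ in $E$, choose $0\leq j\leq n-1$ with $\rho(g_{0}^{a_j}(y),g_{0}^{a_j}(y'))>\epsilon$; by the intertwining relation $g_{0}^{a_j}(y)=h_{a_j}(f_{0}^{a_j}(x_y))$ and $g_{0}^{a_j}(y')=h_{a_j}(f_{0}^{a_j}(x_{y'}))$, so $\rho\big(h_{a_j}(f_{0}^{a_j}(x_y)),h_{a_j}(f_{0}^{a_j}(x_{y'}))\big)>\epsilon$, and the contrapositive of the uniform equi-continuity statement forces $d\big(f_{0}^{a_j}(x_y),f_{0}^{a_j}(x_{y'})\big)\geq\delta>\delta/2$. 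Hence $\widetilde E$ is $(n,\delta/2,A)$-separated in $X$, which gives the comparison $s_n(\epsilon,A,g_{0,\infty},Y)\leq s_n(\delta/2,A,f_{0,\infty},X)$.

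Finally I would pass to the entropy. Taking logarithms, dividing by $a_n$ and passing to $\limsup_{n\to\infty}$ yields $\limsup_{n\to\infty}\frac{1}{a_n}\log s_n(\epsilon,A,g_{0,\infty},Y)\leq\limsup_{n\to\infty}\frac{1}{a_n}\log s_n(\delta/2,A,f_{0,\infty},X)$. Since $s_n(\cdot,A,f_{0,\infty},X)$ is non-increasing in its first argument, the right-hand side is bounded above by $\lim_{\epsilon'\to0}\limsup_{n\to\infty}\frac{1}{a_n}\log s_n(\epsilon',A,f_{0,\infty},X)=h_A(f_{0,\infty})$. Thus $\limsup_{n\to\infty}\frac{1}{a_n}\log s_n(\epsilon,A,g_{0,\infty},Y)\leq h_A(f_{0,\infty})$ for every $\epsilon>0$, and letting $\epsilon\to0$ gives $h_A(g_{0,\infty})\leq h_A(f_{0,\infty})$. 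The only genuinely delicate point in this argument is obtaining a single modulus of continuity $\delta$ valid for all $h_n$ at once: this is exactly where equi-continuity of the family (rather than mere continuity of each $h_n$) is essential, since it is what makes the separation estimate uniform in $n$ and hence compatible with the $\limsup_{n\to\infty}$ in the definition of $h_A$.
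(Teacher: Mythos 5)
Your proof is correct and follows essentially the same route as the paper: lift a maximal $(n,\epsilon,A)$-separated set from $Y$ to $X$ via surjectivity of $h_0$, use the inductively obtained relation $h_m\circ f_0^m=g_0^m\circ h_0$, and invoke equi-continuity to get a single modulus $\delta$ valid for all $n$, yielding $s_n(\epsilon,A,g_{0,\infty},Y)\leq s_n(\delta',A,f_{0,\infty},X)$ and hence the entropy inequality. The only cosmetic difference is your use of $\delta/2$ to guarantee strict separation, where the paper arranges the inequalities so that $\delta$ itself suffices.
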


\begin{proof}
 Let $\epsilon>0$. Since $\{h_n\}_{n=0}^{\infty}$ is equi-continuous, there exists $0<\delta<\epsilon$
such that for any $x,y\in X$ and  $n\geq0$,
\begin{align}\label{51}
d(x,y)\leq\delta\Rightarrow\rho(h_{n}(x),h_{n}(y))<\epsilon.
\end{align}
Let $n\geq 1$ and $E_{Y}\subset Y$ be an $(n,\epsilon,A)$-separated set of $(Y,g_{0,\infty})$ with maximal cardinality $s_n(\epsilon,A,g_{0,\infty},Y)$. Since $h_0$ is surjective,  there exists $u_v\in X$ such that $h_{0}(u_v)=v$ for  $v\in E_{Y}$.
Let $E_{X}\triangleq\{u_{v}: v\in E_{Y}\}\subset X$.  Then there exists $0\leq j\leq n-1$ such that
$
\rho(g_{0}^{a_j}\circ h_{0}(u_{v_1}),g_{0}^{a_j}\circ h_{0}(u_{v_2}))=\rho(g_{0}^{a_j}(v_1),g_{0}^{a_j}(v_2))>\epsilon
$
for $v_1\neq v_2\in E_Y$.
By induction, we have
$
h_{k}\circ f_{0}^{k}=g_{0}^{k}\circ h_0 $ for any $k\geq1.
$
Hence,
$
\rho(h_{a_j}\circ f_{0}^{a_j}(u_{v_1}),h_{a_j}\circ f_{0}^{a_j}(u_{v_2}))>\epsilon.
$
This, along with (\ref{51}), implies that
$
d(f_{0}^{a_j}(u_{v_1}),f_{0}^{a_j}(u_{v_2}))>\delta.
$
Thus, $E_X$ is an $(n,\delta,A)$-separated set of $(X,f_{0,\infty})$. This proves that
$
s_n(\delta,A,f_{0,\infty},X)\geq s_n(\epsilon,A,g_{0,\infty},Y).
$
It follows from (\ref{topological sequence entropy}) that  $h_A(g_{0,\infty})\leq h_A(f_{0,\infty})$.
\end{proof}
Next, we compare $h_A(f_{0,\infty})$ and $h_A(\hat f_{0,\infty})$   for any increasing sequence $A\subset \mathbf{Z}^+$.
Recall that a nonempty subset $\Lambda$ of $X$ is  invariant with respect to $(X,f_{0,\infty})$ if $f_n(\Lambda)\subset \Lambda$ for all $n\geq0$, and
 $(\Lambda,f_{0,\infty})$  is an invariant subsystem of $(X,f_{0,\infty})$.

\begin{lemma}\label{sequence entropy}
Let  $A\subset\mathbf{Z^{+}}$ be  an increasing sequence. Then
$h_A(f_{0,\infty})\leq h_A(\hat{f}_{0,\infty})$.
In particular, $h(f_{0,\infty})\leq h(\hat{f}_{0,\infty})$.
\end{lemma}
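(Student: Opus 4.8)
The plan is to transport separated sets along the canonical embedding $\iota\colon X\to\mathcal{M}(X)$, $\iota(x)=\delta_x$. By Lemma \ref{basic result}(i) this map satisfies $\mathcal{P}_d(\delta_x,\delta_y)=\min\{d(x,y),1\}$, so it is isometric below scale $1$, and since $\hat f_n(\delta_x)=\delta_{f_n(x)}$ for all $n\geq0$, it carries orbits of $(X,f_{0,\infty})$ to orbits of $(\mathcal{M}(X),\hat f_{0,\infty})$; in particular $\Lambda\triangleq\{\delta_x:x\in X\}$ is an invariant subset of $(\mathcal{M}(X),\hat f_{0,\infty})$ on which the dynamics is a metric copy of $(X,f_{0,\infty})$. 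Consequently a set that is separated in $X$ along $A$ produces, via $\iota$, a set of equal cardinality that is separated in $\mathcal{M}(X)$ along $A$, at least at scales $\epsilon<1$; since the entropy is obtained by letting $\epsilon\to0$, this restriction costs nothing.

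More precisely, fix an increasing sequence $A=\{a_i\}_{i=1}^{\infty}\subset\mathbf{Z^{+}}$, an integer $n\geq1$, and $\epsilon\in(0,1)$. Let $E\subset X$ be an $(n,\epsilon,A)$-separated set of $(X,f_{0,\infty})$ with $|E|=s_n(\epsilon,A,f_{0,\infty},X)$. For distinct $x,y\in E$ there is $0\leq j\leq n-1$ with $d(f_0^{a_j}(x),f_0^{a_j}(y))>\epsilon$; then, using $\hat f_0^{a_j}(\delta_x)=\delta_{f_0^{a_j}(x)}$ together with Lemma \ref{basic result}(i),
\[
\mathcal{P}_d\big(\hat f_0^{a_j}(\delta_x),\hat f_0^{a_j}(\delta_y)\big)=\min\{d(f_0^{a_j}(x),f_0^{a_j}(y)),1\}>\epsilon,
\]
the last inequality holding because $\epsilon<1$. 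Hence $\{\delta_x:x\in E\}$ is an $(n,\epsilon,A)$-separated set of $(\mathcal{M}(X),\hat f_{0,\infty})$ of cardinality $|E|$, so $s_n(\epsilon,A,\hat f_{0,\infty},\mathcal{M}(X))\geq s_n(\epsilon,A,f_{0,\infty},X)$.

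Finally I would apply $\limsup_{n\to\infty}\frac{1}{a_n}\log(\cdot)$ to both sides of this inequality and then let $\epsilon\to0$; by (\ref{topological sequence entropy}) this gives $h_A(f_{0,\infty})\leq h_A(\hat f_{0,\infty})$, and the case $A=\mathbf{Z^{+}}$ yields $h(f_{0,\infty})\leq h(\hat f_{0,\infty})$. An equivalent route avoiding the scale-$1$ bookkeeping: the maps $h_n\colon\Lambda\to X$, $h_n(\delta_x)=x$, form a constant (hence equi-continuous) family of surjections with $h_{n+1}\circ\hat f_n=f_n\circ h_n$, so the invariant subsystem $(\Lambda,\hat f_{0,\infty})$ is equi-semiconjugate to $(X,f_{0,\infty})$; Lemma \ref{entropy} then gives $h_A(f_{0,\infty})\leq h_A(\hat f_{0,\infty},\Lambda)$, and $h_A(\hat f_{0,\infty},\Lambda)\leq h_A(\hat f_{0,\infty})$ by the monotonicity of separated-set counts in the subset. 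There is no real obstacle here; the only subtlety is the truncation at $1$ in the Prohorov distance between Dirac masses, which is precisely why the argument is run for $\epsilon\in(0,1)$ — harmless since the entropy is defined through the limit $\epsilon\to0$.
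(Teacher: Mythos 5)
Your proof is correct and follows essentially the same approach as the paper: the paper embeds $X$ into $\mathcal{M}(X)$ via $\varphi_1(x)=\delta_x$, observes that $(X,f_{0,\infty})$ and the invariant subsystem $(\mathcal{M}_1(X),\hat f_{0,\infty})$ are $\{\varphi_1\}$-equi-conjugate, and applies Lemma \ref{entropy} together with monotonicity of $h_A$ in the subset — exactly your second route. Your primary route simply unpacks that lemma into an explicit transport of $(n,\epsilon,A)$-separated sets, and your handling of the truncation at $1$ in $\mathcal{P}_d(\delta_x,\delta_y)$ by restricting to $\epsilon<1$ is sound.
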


\begin{proof}
Note that the map
\begin{align}\label{varphi1}\varphi_1: X\to\mathcal{M}_{1}(X)\end{align}
 defined in (\ref{2.3})
is a homeomorphism. In this way, $X$ is actually embedded in $\mathcal{M}(X)$. Moreover,
$
\varphi_1\circ f_{n}(x)=\delta_{f_{n}(x)}=\hat{f}_{n}(\delta_{x})=\hat{f}_{n}\circ\varphi_1(x)
$
for $x\in X$ and $n\geq0$. $(\mathcal{M}_{1}(X),\hat f_{0,\infty})$ is clearly an invariant subsystem of $(\mathcal{M}(X),\hat f_{0,\infty})$.
Thus, $(X,f_{0,\infty})$ and  $(\mathcal{M}_{1}(X),\hat f_{0,\infty})$   are  $\{\varphi_1\}$-equi-conjugate.
It follows from Lemma \ref{entropy} that $h_{A}(f_{0,\infty})=h_{A}\big(\hat{f}_{0,\infty},\mathcal{M}_{1}(X)\big)\leq h_A(\hat{f}_{0,\infty})$.
\end{proof}
Then, we show that  equi-continuity is interactively  preserved.
\begin{lemma}\label{equi-continuous}
Let $h_n: X\to Y$ be a map for $n\geq0$. Then, $\{h_n\}_{n=0}^{\infty}$ is equi-continuous  if and only if
$\{\hat{h}_n\}_{n=0}^{\infty}$ is equi-continuous.
\end{lemma}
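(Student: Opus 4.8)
The plan is to prove both implications by the same back-and-forth transfer of Prohorov-metric estimates used in the proof of Lemma \ref{converge uniform}, relying on three facts: the Strassen characterization \eqref{measure-metric-def}, the identity $\mathcal{P}_{d}(\delta_x,\delta_y)=\min\{d(x,y),1\}$ from Lemma \ref{basic result}(i), and $\hat h_n(\delta_x)=\delta_{h_n(x)}$. As in the passage around \eqref{51}, ``equi-continuous'' is understood uniformly: $\{h_n\}_{n=0}^{\infty}$ is equi-continuous iff for every $\varepsilon>0$ there is $\delta>0$ such that $\rho(h_n(x),h_n(y))<\varepsilon$ whenever $d(x,y)\le\delta$ and $n\ge0$, and likewise on $\mathcal{M}(X),\mathcal{M}(Y)$ with $\mathcal{P}_{d},\mathcal{P}_{\rho}$. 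I will also use that each $h_n$ is automatically Borel measurable, since otherwise $\hat h_n$ would not be well defined.

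For the forward implication, I would assume $\{h_n\}_{n=0}^{\infty}$ is equi-continuous, fix $\varepsilon>0$, and pick $\delta\in(0,\varepsilon)$ with $d(x,y)\le\delta\Rightarrow\rho(h_n(x),h_n(y))<\varepsilon$ for all $n$. The one step that is not pure bookkeeping is the inclusion
\[
\big(h_n^{-1}(B)\big)^{\delta}\subset h_n^{-1}\big(B^{\varepsilon}\big),\qquad B\in\mathcal{B}(Y),\ n\ge0,
\]
which I would verify directly: if $d(x,h_n^{-1}(B))<\delta$ then $d(x,z)<\delta$ for some $z$ with $h_n(z)\in B$, hence $\rho(h_n(x),B)\le\rho(h_n(x),h_n(z))<\varepsilon$, i.e. $h_n(x)\in B^{\varepsilon}$; here $B^{\varepsilon}$ is open, so every preimage in play is Borel. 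Then for any $\mu,\nu\in\mathcal{M}(X)$ with $\mathcal{P}_{d}(\mu,\nu)<\delta$ and any $B\in\mathcal{B}(Y)$,
\begin{align*}
\hat h_n(\mu)(B)&=\mu\big(h_n^{-1}(B)\big)\le\nu\big((h_n^{-1}(B))^{\delta}\big)+\delta\\
&\le\nu\big(h_n^{-1}(B^{\varepsilon})\big)+\delta=\hat h_n(\nu)(B^{\varepsilon})+\delta\le\hat h_n(\nu)(B^{\varepsilon})+\varepsilon,
\end{align*}
so \eqref{measure-metric-def} yields $\mathcal{P}_{\rho}(\hat h_n(\mu),\hat h_n(\nu))\le\varepsilon$ for every $n$, which is exactly equi-continuity of $\{\hat h_n\}_{n=0}^{\infty}$.

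For the converse, I would assume $\{\hat h_n\}_{n=0}^{\infty}$ is equi-continuous and use that $\varphi_1\colon x\mapsto\delta_x$ (see \eqref{varphi1}) is, by Lemma \ref{basic result}(i), an isometry for points at distance $<1$, together with $\hat h_n\circ\varphi_1=\varphi_1\circ h_n$. Given $\varepsilon\in(0,1)$, choose $\delta\in(0,1)$ so that $\mathcal{P}_{d}(\mu,\nu)<\delta$ forces $\mathcal{P}_{\rho}(\hat h_n(\mu),\hat h_n(\nu))<\varepsilon$ for all $n$; then $d(x,y)<\delta$ gives $\mathcal{P}_{d}(\delta_x,\delta_y)=d(x,y)<\delta$, hence
\[
\min\{\rho(h_n(x),h_n(y)),1\}=\mathcal{P}_{\rho}\big(\delta_{h_n(x)},\delta_{h_n(y)}\big)<\varepsilon<1,
\]
which forces $\rho(h_n(x),h_n(y))<\varepsilon$ for all $n$ (shrinking $\delta$ if one insists on the non-strict hypothesis $d(x,y)\le\delta$ is harmless). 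Thus $\{h_n\}_{n=0}^{\infty}$ is equi-continuous. I expect the only real obstacle to be the forward direction, specifically pinning down the inclusion $(h_n^{-1}(B))^{\delta}\subset h_n^{-1}(B^{\varepsilon})$ and the attendant measurability remarks; once that is in hand, both directions are immediate consequences of \eqref{measure-metric-def} and Lemma \ref{basic result}(i), mirroring Lemma \ref{converge uniform} almost verbatim.
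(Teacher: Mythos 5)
Your proof is correct and follows essentially the same route as the paper: the forward direction hinges on the same inclusion $(h_n^{-1}(B))^{\delta}\subset h_n^{-1}(B^{\varepsilon})$ fed into the Strassen characterization of $\mathcal{P}_\rho$, and the converse uses Dirac measures together with Lemma \ref{basic result}(i) exactly as the paper does. The only differences are cosmetic (your explicit measurability remarks and the harmless $\le\varepsilon$ versus $<\varepsilon$ bookkeeping).
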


\begin{proof}
Suppose that $\{h_n\}_{n=0}^{\infty}$ is equi-continuous. Let $\epsilon>0$.
Then there exists $\delta \in(0,\epsilon)$ such that for  $n\geq0$ and $x,y\in X$,
\begin{align}\label{5.1}
d(x,y)<\delta\Rightarrow\rho(h_{n}(x),h_{n}(y))<\epsilon.
\end{align}
We claim that
\begin{align}\label{5.2}
(h_{n}^{-1}(B))^{\delta}\subset h_{n}^{-1}(B^{\epsilon})
\end{align}
for $B\in\mathcal{B}(Y)$ and $n\geq0$.
In fact,  there exists $x'\in h_{n}^{-1}(B)$ such that
$d(x,x')<\delta$ for any $x\in(h_{n}^{-1}(B))^{\delta}$.  By (\ref{5.1}), $\rho(h_{n}(x),h_{n}(x'))<\epsilon$, and thus $x\in h_{n}^{-1}(B^{\epsilon})$.
For any $\mu,\nu\in \mathcal{M}(X)$ with  $\mathcal{P}_{d}(\mu,\nu)<\delta$, it follows from (\ref{5.2}) that
\begin{align*}
\hat{h}_{n}(\mu)(B)=\mu(h_n^{-1}(B))\leq\nu((h_{n}^{-1}(B))^{\delta})+\delta
\leq\nu(h_{n}^{-1}(B^{\epsilon}))+\epsilon=\hat{h}_{n}(\nu)(B^{\epsilon})+\epsilon,
\end{align*}
which means that $\mathcal{P}_{\rho}(\hat{h}_n(\mu),\hat{h}_n(\nu))<\epsilon$. Hence,
$\{\hat{h}_n\}_{n=0}^{\infty}$ is equi-continuous.

Conversely, suppose that $\{\hat{h}_n\}_{n=0}^{\infty}$ is equi-continuous. Let $\epsilon\in(0,1)$.
Then there exists $\delta_1\in(0,\epsilon)$ such that for any $n\geq0$ and $\mu,\nu\in\mathcal{M}(X)$,
$
\mathcal{P}_{d}(\mu,\nu)<\delta_{1}\Rightarrow\mathcal{P}_{\rho}(\hat{h}_n(\mu),\hat{h}_n(\nu))<\epsilon.
$
Let $n\geq0$ and  $x,y\in X$ with $d(x,y)<\delta_{1}$. Then, $\mathcal{P}_{d}(\delta_{x},\delta_{y})\leq d(x,y)<\delta_{1}$,
and thus $\rho(h_{n}(x),h_{n}(y))=\mathcal{P}_{\rho}(\hat{h}_n(\delta_{x}),\hat{h}_n(\delta_{y}))<\epsilon$. Therefore, $\{h_n\}_{n=0}^{\infty}$
is equi-continuous.
\end{proof}
Next, we prove the  preservation of   equi-conjugacy.

\begin{theorem}\label{equi conjugacy}
Let $h_n: X\to Y$ be a map  for  $n\geq0$. Then, $(X,f_{0,\infty})$ is
$\{h_n\}_{n=0}^{\infty}$-equi-conjugate to $(Y,g_{0,\infty})$ if and only if $(\mathcal{M}(X),\hat{f}_{0,\infty})$ is  $\{\hat{h}_n\}_{n=0}^{\infty}$-equi-conjugate to $(\mathcal{M}(Y),
\hat{g}_{0,\infty})$.
\end{theorem}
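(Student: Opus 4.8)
The plan is to reduce the theorem to three elementary observations about the assignment $\phi\mapsto\hat\phi$, combined with Lemma \ref{surjective} and Lemma \ref{equi-continuous}. First I would record that this assignment is functorial: for any continuous maps $\psi\colon X\to Y$ and $\phi\colon Y\to Z$ one has $\widehat{\phi\circ\psi}=\hat\phi\circ\hat\psi$, which is immediate from $(\phi\circ\psi)^{-1}(A)=\psi^{-1}(\phi^{-1}(A))$ and the defining formula \eqref{1.1}; in particular $\widehat{\mathrm{id}_X}=\mathrm{id}_{\mathcal{M}(X)}$. Second, it is injective: since $\hat\phi(\delta_x)=\delta_{\phi(x)}$, if $\hat\phi=\hat\psi$ then evaluating on Dirac measures gives $\delta_{\phi(x)}=\delta_{\psi(x)}$, hence $\phi=\psi$. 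Third, if $\phi$ is a homeomorphism of $X$ onto $Y$, then $\widehat{\phi^{-1}}\circ\hat\phi=\widehat{\phi^{-1}\circ\phi}=\mathrm{id}_{\mathcal{M}(X)}$ and symmetrically, so $\hat\phi$ is a homeomorphism of $\mathcal{M}(X)$ onto $\mathcal{M}(Y)$ with $(\hat\phi)^{-1}=\widehat{\phi^{-1}}$.

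With these in hand, the forward implication is quick. Assume $(X,f_{0,\infty})$ is $\{h_n\}$-equi-conjugate to $(Y,g_{0,\infty})$, so each $h_n$ is a surjective homeomorphism, both $\{h_n\}$ and $\{h_n^{-1}\}$ are equi-continuous, and $h_{n+1}\circ f_n=g_n\circ h_n$ for all $n\geq0$. By Lemma \ref{surjective} each $\hat h_n$ is surjective; by Lemma \ref{equi-continuous} the family $\{\hat h_n\}$ is equi-continuous, and applying the same lemma to $\{h_n^{-1}\}$ shows that $\{\widehat{h_n^{-1}}\}=\{(\hat h_n)^{-1}\}$ is equi-continuous from $\mathcal{M}(Y)$ to $\mathcal{M}(X)$. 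Taking hats in $h_{n+1}\circ f_n=g_n\circ h_n$ and invoking functoriality yields $\hat h_{n+1}\circ\hat f_n=\hat g_n\circ\hat h_n$. Hence $(\mathcal{M}(X),\hat f_{0,\infty})$ is $\{\hat h_n\}$-equi-conjugate to $(\mathcal{M}(Y),\hat g_{0,\infty})$.

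For the converse, assume $(\mathcal{M}(X),\hat f_{0,\infty})$ is $\{\hat h_n\}$-equi-conjugate to $(\mathcal{M}(Y),\hat g_{0,\infty})$: each $\hat h_n$ is a surjective homeomorphism, $\{\hat h_n\}$ and $\{(\hat h_n)^{-1}\}$ are equi-continuous, and $\hat h_{n+1}\circ\hat f_n=\hat g_n\circ\hat h_n$. Lemma \ref{surjective} gives surjectivity of each $h_n$, and $h_n(x)=h_n(y)\Rightarrow\hat h_n(\delta_x)=\hat h_n(\delta_y)\Rightarrow\delta_x=\delta_y\Rightarrow x=y$ gives injectivity, so $h_n$ is a continuous bijection of the compact space $X$ onto $Y$, hence a homeomorphism, and $(\hat h_n)^{-1}=\widehat{h_n^{-1}}$ by the third observation. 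Then Lemma \ref{equi-continuous} turns equi-continuity of $\{\hat h_n\}$ into that of $\{h_n\}$, and equi-continuity of $\{\widehat{h_n^{-1}}\}$ into that of $\{h_n^{-1}\}$ from $Y$ to $X$. Finally, functoriality rewrites $\hat h_{n+1}\circ\hat f_n=\hat g_n\circ\hat h_n$ as $\widehat{h_{n+1}\circ f_n}=\widehat{g_n\circ h_n}$, and the injectivity of the hat operation forces $h_{n+1}\circ f_n=g_n\circ h_n$. Thus $(X,f_{0,\infty})$ is $\{h_n\}$-equi-conjugate to $(Y,g_{0,\infty})$.

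The only slightly delicate point is sequencing the converse argument correctly: one must first promote each $h_n$ to a homeomorphism — using surjectivity from Lemma \ref{surjective}, injectivity from the Dirac-measure test, and compactness of $X$ — before $h_n^{-1}$ and the identity $(\hat h_n)^{-1}=\widehat{h_n^{-1}}$ become meaningful; once that is settled, the rest is a mechanical application of Lemmas \ref{surjective} and \ref{equi-continuous} together with the functoriality of $\phi\mapsto\hat\phi$.
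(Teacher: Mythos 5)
Your proposal is correct and follows essentially the same route as the paper: Dirac measures for injectivity, Lemma \ref{surjective} (or the explicit pushforward construction) for surjectivity, Lemma \ref{equi-continuous} together with the identity $(\hat h_n)^{-1}=\widehat{h_n^{-1}}$ for the equi-continuity of both families, and the functoriality $\widehat{\phi\circ\psi}=\hat\phi\circ\hat\psi$ for transferring the intertwining relations. Isolating functoriality and the injectivity of $\phi\mapsto\hat\phi$ as standalone observations is a tidier packaging of the same argument.
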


\begin{proof}
Suppose that $(X,f_{0,\infty})$ is  $\{h_n\}_{n=0}^{\infty}$-equi-conjugate to $(Y,g_{0,\infty})$. First, we show that $\hat h_n:\mathcal{M}(X)\to\mathcal{M}(Y)$ is bijective for $n\geq0$.
 Fix $n\geq0$. Let $\mu,\nu\in\mathcal{M}(X)$ with
$\hat{h}_n(\mu)=\hat{h}_n(\nu)$. For any $A\in\mathcal{B}(X)$, there exists $B_{0}\in\mathcal{B}(Y)$
such that $B_{0}=h_{n}(A)$ since $h_n$ is a homeomorphism. Thus,
$
\mu(A)=\mu(h_{n}^{-1}(B_0))=\hat{h}_n(\mu)(B_0)
=\hat{h}_n(\nu)(B_0)=\nu(h_{n}^{-1}(B_0))=\nu(A),
$
which means $\mu=\nu$, and thus $\hat{h}_n$ is injective.
Let $\nu\in\mathcal{M}(Y)$. Define
$\mu(A)\triangleq\nu(h_n(A))$ for  $A\in\mathcal{B}(X)$. Then $\mu\in\mathcal{M}(X)$.
For any $B\in\mathcal{B}(Y)$, there exists $A\in\mathcal{B}(X)$ such that $B=h_{n}(A)$, and thus,
$
\nu(B)=\nu(h_{n}(A))=\mu(A)=\mu(h_{n}^{-1}(B))=\hat{h}_n(\mu)(B).
$
Then $\nu=\hat{h}_n(\mu)$ and $\hat{h}_n$ is surjective.

Next, we show that $\{\hat{h}_n\}_{n=0}^{\infty}$ and $\{\hat{h}_{n}^{-1}\}_{n=0}^{\infty}$ are equi-continuous.
Note that $\hat{h}_n^{-1}=\widehat{h_n^{-1}}$. In fact, there exists $\mu\in\mathcal{M}(X)$ such that $\nu=\hat h_n(\mu)$
for $\nu\in \mathcal{M}(Y)$, and thus,
$
\widehat{h_n^{-1}}(\nu)(B)=\nu(h_n(B))=\hat h_n(\mu)(h_n(B))=\mu(B)=\hat h_n^{-1}(\nu)(B)
$
for  $B\in\mathcal{B}(Y)$.
Since $\{h_n\}_{n=0}^{\infty}$ and $\{h_{n}^{-1}\}_{n=0}^{\infty}$ are equi-continuous,
it follows from Lemma \ref{equi-continuous} that $\{\hat{h}_n\}_{n=0}^{\infty}$ and $\{\hat{h}_{n}^{-1}=\widehat{h_n^{-1}}\}_{n=0}^{\infty}$ are equi-continuous. Since $h_{n+1}\circ f_{n}=g_{n}\circ h_{n}$, we have
\begin{align*}
\hat{h}_{n+1}\circ\hat{f}_{n}(\mu)(B)=\mu((h_{n+1}\circ f_{n})^{-1}(B))=\mu((g_{n}\circ h_{n})^{-1}(B))=\hat{g}_{n}\circ \hat{h}_{n}(\mu)(B)
\end{align*}
for  $\mu\in\mathcal{M}(X)$ and  $B\in\mathcal{B}(Y)$,
which implies that $\hat{h}_{n+1}\circ\hat{f}_{n}=\hat{g}_{n}\circ \hat{h}_{n}$.
Hence, $(\mathcal{M}(X),\hat{f}_{0,\infty})$ is  $\{\hat{h}_n\}_{n=0}^{\infty}$-equi-conjugate to $(\mathcal{M}(Y),\hat{g}_{0,\infty})$.

Suppose that $(\mathcal{M}(X),\hat{f}_{0,\infty})$ is  $\{\hat{h}_n\}_{n=0}^{\infty}$-equi-conjugate to
$(\mathcal{M}(Y),\hat{g}_{0,\infty})$. Fix $n\geq0$. If $h_n(x)=h_n(y)$ for some  $x,y\in X$,
then
$\hat{h}_n(\delta_{x})=\delta_{h_{n}(x)}=\delta_{h_{n}(y)}=\hat{h}_n(\delta_{y})
$ and thus $\delta_{x}=\delta_{y}$, which means that $x=y$. Hence, $h_n$ is injective.
Since $\hat h_n$ is surjective,
there exists $\mu\in\mathcal{M}(X)$ such that $\hat{h}_n(\mu)=\delta_{y}$ for $y\in Y$.
Then $\mu(h_{n}^{-1}(\{y\}))=\hat{h}_n(\mu)(\{y\})=\delta_{y}(\{y\})=1$, and thus $h_{n}^{-1}(\{y\})\neq\emptyset$.
Hence, $h_n$ is surjective.
By Lemma \ref{equi-continuous} and the fact that $\hat{h}_n^{-1}=\widehat{h_n^{-1}}$,
 $\{h_n\}_{n=0}^{\infty}$ and $\{h_{n}^{-1}\}_{n=0}^{\infty}$ are equi-continuous.
Moreover,
$
\delta_{h_{n+1}\circ f_{n}(x)}=\hat{h}_{n+1}\circ\hat{f}_{n}(\delta_x)
=\hat{g}_{n}\circ\hat{h}_{n}(\delta_x)=\delta_{g_{n}\circ h_{n}(x)}
$
for $x\in X$,
which means that $h_{n+1}\circ f_{n}=g_{n}\circ h_{n}$.
Therefore, $(X,f_{0,\infty})$ is  $\{h_n\}_{n=0}^{\infty}$-equi-conjugate to $(Y,g_{0,\infty})$.
\end{proof}

Recall that $(X,f_{0,\infty})$ is Li-Yorke chaotic if it has an uncountable Li-Yorke scrambled set $S$; that is,
for any $x\neq y\in S$, $(x,y)$ is a Li-Yorke pair, namely,
\begin{align*}
\liminf_{n\to\infty}d(f_{0}^{n}(x),f_{0}^{n}(y))=0,\;\limsup_{n\to\infty}d(f_{0}^{n}(x), f_{0}^{n}(y))>0.
\end{align*}
Also, $(X,f_{0,\infty})$ is  distributionally chaotic if it has an uncountable distributional scrambled set $D$; that is,
for any $x\neq y\in D$,

(i) $\limsup_{n\to\infty}\frac{1}{n}\sum\limits_{i=1}^{n}\chi_{[0,\epsilon)}(d(f_{0}^{i}(x), f_{0}^{i}(y)))=1$ for any $\epsilon>0$,

(ii) $\liminf_{n\to\infty}\frac{1}{n}\sum\limits_{i=1}^{n}\chi_{[0,\delta)}(d(f_{0}^{i}(x), f_{0}^{i}(y)))=0$ for some $\delta>0$.

From the embedding of $X$ into $\mathcal{M}(X)$ by $\eqref{varphi1}$,
 it is easy to  prove that Li-Yorke  chaos (resp., distributional chaos) of $(X,f_{0,\infty})$ implies that of $(\mathcal{M}(X),\hat{f}_{0,\infty})$.

\begin{proposition}\label{Li-Yorke chaos1}
If $(X,f_{0,\infty})$ is Li-Yorke chaotic (resp., distributionally chaotic), then so is $(\mathcal{M}(X),\hat{f}_{0,\infty})$.
\end{proposition}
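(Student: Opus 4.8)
The plan is to transport the scrambled set from $X$ to $\mathcal{M}(X)$ along the embedding $\varphi_1:X\to\mathcal{M}_1(X)$, $\varphi_1(x)=\delta_x$, introduced in \eqref{varphi1}, which is a homeomorphism onto its image and intertwines the dynamics, $\varphi_1\circ f_n=\hat f_n\circ\varphi_1$. By Lemma \ref{basic result}(i) together with $\hat f_0^n(\delta_x)=\delta_{f_0^n(x)}$,
\[
\mathcal{P}_d\big(\hat f_0^n(\delta_x),\hat f_0^n(\delta_y)\big)=\min\{d(f_0^n(x),f_0^n(y)),1\},\qquad x,y\in X,\ n\geq1,
\]
so the orbit distance between two Dirac masses in $(\mathcal{M}(X),\hat f_{0,\infty})$ is exactly the truncation at $1$ of the orbit distance in $(X,f_{0,\infty})$. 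The whole argument reduces to checking that this truncation preserves the asymptotic quantities in the definitions of Li-Yorke and distributional chaos.

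For the Li-Yorke case, let $S\subset X$ be an uncountable Li-Yorke scrambled set of $(X,f_{0,\infty})$ and put $\tilde S:=\varphi_1(S)=\{\delta_x:x\in S\}$, which is uncountable since $\varphi_1$ is injective. Fix $x\neq y\in S$. Because $t\mapsto\min\{t,1\}$ is continuous, nondecreasing and fixes $0$, both $\liminf$ and $\limsup$ commute with it, whence
\[
\liminf_{n\to\infty}\mathcal{P}_d\big(\hat f_0^n(\delta_x),\hat f_0^n(\delta_y)\big)=\min\{\liminf_{n\to\infty}d(f_0^n(x),f_0^n(y)),1\}=0
\]
and $\limsup_{n\to\infty}\mathcal{P}_d(\hat f_0^n(\delta_x),\hat f_0^n(\delta_y))=\min\{\limsup_{n\to\infty}d(f_0^n(x),f_0^n(y)),1\}>0$. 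Hence $(\delta_x,\delta_y)$ is a Li-Yorke pair of $(\mathcal{M}(X),\hat f_{0,\infty})$, so $\tilde S$ is an uncountable Li-Yorke scrambled set and $(\mathcal{M}(X),\hat f_{0,\infty})$ is Li-Yorke chaotic.

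For distributional chaos, let $D\subset X$ be an uncountable distributional scrambled set and $\tilde D:=\varphi_1(D)$, again uncountable. Fix $x\neq y\in D$. The elementary observation is that for any $\eta\in(0,1]$ and any $t\geq0$ one has $\min\{t,1\}<\eta$ if and only if $t<\eta$, so $\chi_{[0,\eta)}(\mathcal{P}_d(\hat f_0^i(\delta_x),\hat f_0^i(\delta_y)))=\chi_{[0,\eta)}(d(f_0^i(x),f_0^i(y)))$ for all $i\geq1$. Consequently, for $\epsilon\in(0,1]$ condition (i) for $(\delta_x,\delta_y)$ reduces to condition (i) for $(x,y)$, while for $\epsilon>1$ the indicator on the left is identically $1$; thus (i) holds for $(\delta_x,\delta_y)$ for every $\epsilon>0$. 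Taking for (ii) a witnessing constant $\delta\in(0,1]$ for $(x,y)$ (which may be assumed $\leq1$, since (ii) persists upon shrinking $\delta$), condition (ii) for $(\delta_x,\delta_y)$ likewise reduces to condition (ii) for $(x,y)$. Hence $(\delta_x,\delta_y)$ is a distributionally scrambled pair, $\tilde D$ is an uncountable distributional scrambled set, and $(\mathcal{M}(X),\hat f_{0,\infty})$ is distributionally chaotic.

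There is essentially no obstacle here; the only point requiring care is the truncation in $\mathcal{P}_d(\delta_u,\delta_v)=\min\{d(u,v),1\}$, which is harmless because $\liminf$ and $\limsup$ commute with the monotone continuous map $t\mapsto\min\{t,1\}$, and because the thresholds $\epsilon,\delta$ in the definition of distributional chaos may be taken at most $1$ without loss of generality.
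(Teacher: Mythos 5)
Your proposal is correct and follows essentially the same route as the paper: transport the scrambled set via the Dirac embedding $x\mapsto\delta_x$ and use $\mathcal{P}_d(\delta_u,\delta_v)=\min\{d(u,v),1\}$ to transfer the Li-Yorke (resp.\ distributional) conditions. The only difference is that you spell out the distributional-chaos case (the truncation at $1$ and the normalization $\delta\leq1$), which the paper dismisses with ``can be similarly proved.''
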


\begin{proof}
Suppose that  $S$ is an uncountable Li-Yorke scrambled set of $(X,f_{0,\infty})$. Denote
$\mathcal{S}\triangleq\{\delta_x: x\in S\}$. Then $\mathcal{S}\subset\mathcal{M}(X)$ is uncountable.
Fix  $\delta_x\neq\delta_y\in\mathcal{S}$. Then $x\neq y\in S$ and $(x,y)$  is a Li-Yorke pair. Thus,
\begin{align*}
\limsup_{n\to\infty}\mathcal{P}_{d}(\hat{f}^{n}_{0}(\delta_x),\hat{f}^{n}_{0}(\delta_y))
=\limsup_{n\to\infty}\mathcal{P}_{d}(\delta_{f^{n}_{0}(x)},\delta_{f^{n}_{0}(y)})
\geq\min\{\limsup_{n\to\infty}d(f^{n}_{0}(x),f^{n}_{0}(y)),1\}>0
\end{align*}
and
\begin{align*}
\liminf_{n\to\infty}\mathcal{P}_{d}(\hat{f}^{n}_{0}(\delta_x),\hat{f}^{n}_{0}(\delta_y))
=\liminf_{n\to\infty}\mathcal{P}_{d}(\delta_{f^{n}_{0}(x)},\delta_{f^{n}_{0}(y)})
\leq\liminf_{n\to\infty}d(f^{n}_{0}(x),f^{n}_{0}(y))=0.
\end{align*}
Hence, $\mathcal{S}$ is an uncountable Li-Yorke scrambled set of $(\mathcal{M}(X),\hat{f}_{0,\infty})$,
and thus $(\mathcal{M}(X),\hat{f}_{0,\infty})$ is Li-Yorke chaotic.
Distributional chaos of $(\mathcal{M}(X),\hat{f}_{0,\infty})$ can be  similarly proved.
\end{proof}

\if0
The space of two-sided sequences
\begin{align*}
\Sigma_2=\{\alpha=(\cdots,a_{-1},a_0, a_1,\cdots) :a_i\in \{0,1\}\}
\end{align*}
is a compact metric space with the distance
\begin{align*}
\tilde{\rho}(\alpha,\beta)=\sum_{n\in\mathbf{Z}}\frac{d_0(a_n, b_n)}{2^{|n|}},\;\alpha=(\cdots,a_{-1},a_0,a_1,\cdots),\beta=(\cdots,b_{-1},b_0, b_1,\cdots)\in\Sigma_2,
\end{align*}
where $d_0(a_n, b_n)=1$ if $a_n\neq b_n$, and $d_0(a_n, b_n)=0$ if $a_n=b_n$ for  $n\in\mathbf{Z}$.
The  two-sided left shift map $\sigma:\Sigma_2\to\Sigma_2$ is defined by
 \begin{align*}
 \sigma(\cdots,a_{-1},a_0&, a_1,\cdots)=(\cdots,a_{0},a_1, a_2,\cdots).\\
 \ast\;&\;\;\;\;\;\;\;\;\;\;\;\;\;\;\;\;\;\;\;\;\;\;\;\;\;\;\;\;\;\;\;\;\ast
 \end{align*}
Finally, we  provide  a counterexample to show that  the converse of Proposition \ref{Li-Yorke chaos1}  is not  true in general.
\begin{theorem}\label{Li-Yorke chaos2}
There exists $(X,f)$ such that it is not Li-Yorke chaotic (resp., not distributionally chaotic) but $(\mathcal{M}(X),\hat{f})$ is Li-Yorke chaotic (resp., distributionally chaotic).
\end{theorem}

\begin{proof}
Let $X=\mathbf{Z}\cup\{\infty\}$ be a one-point compactification of integers with  metric $d$,
and $f: X\to X$ be a map defined by
\begin{align*}
		f(n)=\left\{\begin{array}{ll}
		n-1,& n\in\mathbf{Z},\\
		\infty, & n=\infty.
		\end{array}\right.
	\end{align*}
Clearly, $\infty$ is a fixed point and  all other points are asymptotic to $\infty$. Thus, $(X,f)$ is not Li-Yorke chaotic (resp., distributionally chaotic).
For every infinite subset $A\subset\mathbf{Z}$, we define $s_{A}\triangleq(\cdots,x_{-1},x_{0},x_{1},\cdots)\in\Sigma_2$ by
\begin{align*}
		x_i=\left\{\begin{array}{ll}
		1,& i\in A,\\
		0, & i\in \mathbf{Z}\setminus A.
		\end{array}\right.
\end{align*}
Obviously, $A\neq B$ if and only if $s_A\neq s_B$. Since $A$ is infinite, we rearrange  $A=\{a_n\}_{n=1}^{\infty}$ to satisfy
$|a_{n}|\leq|a_{n+1}|$ and $a_n<a_{n+1}$ if $|a_n|=|a_{n+1}|$ for $n\geq1$, and define
\begin{align*}
\mu_A(K)\triangleq\sum_{n=1}^{\infty}\frac{1}{2^{n}}\delta_{a_n}(K)
\end{align*}
for  $K\in\mathcal{B}(X)$.
Since
 \begin{align*}
 \mu_A(X)=& \sum_{n=1}^{\infty}\frac{1}{2^{n}}=1\end{align*}
 and
 \begin{align*}
\mu_A\left(\bigcup_{i=1}^\infty K_i\right )=&\sum_{n=1}^{\infty}\frac{1}{2^{n}}\delta_{a_n}\left(\bigcup_{i=1}^\infty K_i\right)=
\sum_{n=1}^{\infty}\sum_{i=1}^{\infty}\frac{1}{2^{n}}\delta_{a_n}\left( K_i\right)\\
=&\sum_{i=1}^{\infty}\sum_{n=1}^{\infty}\frac{1}{2^{n}}\delta_{a_n}\left( K_i\right)=\sum_{i=1}^{\infty}\mu_A\left( K_i\right)
 \end{align*}
 for all countable collections   $\{K_{i}\}_{i=1}^{\infty}$ of pairwise disjoint  sets in $\mathcal{B}(X)$,
 we have $\mu_A\in\mathcal{M}(X)$.
By the definition of $\mu_A$,
$A\neq B$ if and only if $\mu_A\neq \mu_B$, and
\begin{align}\label{invariant}
\hat{f}(\mu_A)(K)=\sum_{n=1}^{\infty}\frac{1}{2^{n}}\delta_{a_n}(f^{-1}(K))
=\sum_{n=1}^{\infty}\frac{1}{2^{n}}\delta_{f(a_n)}(K)
\end{align}
for $K\in\mathcal{B}(X)$.
Let
\begin{align*}
\mathcal{M}_{0}(X)\triangleq\{\mu_A: A\subset\mathbf{Z}\; {\rm is}\; {\rm infinite}\}.
\end{align*}
Then, $\mathcal{M}_{0}(X)\subset\mathcal{M}(X)$, and it follows from  (\ref{invariant}) that $\hat{f}(\mathcal{M}_{0}(X))\subset\mathcal{M}_{0}(X)$.
Let
\begin{align*}
\Lambda\triangleq\{s_A\in\Sigma_2: A\subset\mathbf{Z}\textrm{ is  infinite}\}.
\end{align*}
Then, $\Lambda\subset\Sigma_2$ and $\sigma(\Lambda)\subset\Lambda$. Since $(\Sigma_2,\sigma)$ is Li-Yorke chaotic,  there exists an uncountable Li-Yorke scrambled subset
$S\subset\Sigma_2$.  Noting that all the points in $\Sigma_2\setminus\Lambda$ are asymptotic to $(\cdots,0,0,0,\cdots)$,  then  $S\subset\Lambda$,
and thus, $(\Lambda,\sigma)$ is Li-Yorke chaotic. Define $h: \mathcal{M}_{0}(X)\to\Lambda$ by
\begin{align*}
h(\mu_A)=s_A\end{align*}
for $\mu_A\in\mathcal{M}_{0}(X)$.
Then by (\ref{invariant}),
\begin{align*}
h\circ\hat{f}(\mu_A)=h(\mu_{f(A)})=s_{f(A)}=\sigma(s_{A})=\sigma\circ h(\mu_A)
\end{align*}
for  $\mu_A\in\mathcal{M}_{0}(X)$.

 Now, we show that $h$ is a homeomorphism.
Clearly, $h$ is bijective.
 It is to show that $h$ is continuous.
Let $\epsilon>0$. Then there exists $N\geq 1$ such that $\frac{1}{2^{N}}<\epsilon$.
Let $\delta\in\left(0,\frac{1}{4^{N+2}}\right)$ be  small enough such that $\{i\}^{\delta}= \{i\}$ for $-(N+1)\leq i\leq N+1$. For any $A=\{a_n\}_{n=1}^{\infty}$ and $B=\{b_n\}_{n=1}^{\infty}$ with $\mathcal{P}_{d}(\mu_A,\mu_B)<\delta$, we claim  that $x_i=y_i$ for $-(N+1)\leq i\leq N+1$, where $s_{A}=(\cdots,x_{-1},x_{0},x_{1},\cdots)$ and
$s_{B}=(\cdots,y_{-1},y_{0},y_{1},\cdots)$.
In fact, if $i\in A \cap[-(N+1), N+1]$, then by the arrangement of $\{a_n\}_{n=1}^{\infty}$,
\begin{align*}
{1\over2^{2N+3}}\leq\sum_{n=1}^{\infty}\frac{1}{2^{n}}\delta_{a_n}(\{i\})=\mu_A(\{i\})\leq\mu_B(\{i\}^{\delta})+\delta
=\sum_{n=1}^{\infty}\frac{1}{2^{n}}\delta_{b_n}(\{i\})+\frac{1}{4^{N+2}},
\end{align*}
which implies that $i\in B$.  Similarly,  $i\in B\cap[-(N+1), N+1]$ implies $i\in A$.
Thus, $x_i=y_i$ for  $-(N+1)\leq i\leq N+1$. So,
\begin{align*}
\tilde{\rho}(h(\mu_A),h(\mu_B))=\tilde\rho(s_A,s_B)\leq\frac{1}{2^{N}}<\epsilon.
\end{align*}
This proves that $h$ is continuous.
Next, we show that $h^{-1}$ is continuous. Let $s_A\in\Lambda$ with $A=\{a_n\}_{n=1}^{\infty}\subset\mathbf{Z}$, and $\epsilon>0$. Choose $N_0\geq 1$ such that $\frac{1}{2^{N_0}}<\epsilon$. Then there exists
$\delta>0$ small enough such that for any $s_B\in\Lambda$ satisfying $\tilde{\rho}(s_A,s_B)<\delta$,
\begin{align*}
a_n=b_n,\;1\leq n\leq N_0,
\end{align*}
where  $B=\{b_n\}_{n=1}^{\infty}$. So, for any $K\in\mathcal{B}(X)$,
\begin{align*}
\mu_A(K)=&\sum_{n=1}^{N_0}\frac{1}{2^{n}}\delta_{a_n}(K)+\sum_{n=N_0+1}^{\infty}\frac{1}{2^{n}}\delta_{a_n}(K)\leq
\sum_{n=1}^{N_0}\frac{1}{2^{n}}\delta_{b_n}(K^{\epsilon})+{1\over2^{N_0}}\\
<&\sum_{n=1}^{N_0}\frac{1}{2^{n}}\delta_{b_n}(K^{\epsilon})+\epsilon\leq\mu_B(K^{\epsilon})+\epsilon,
\end{align*}
which means that $\mathcal{P}_{d}(h^{-1}(s_A),h^{-1}(s_B))=\mathcal{P}_{d}(\mu_A,\mu_B)<\epsilon$. So, $h^{-1}$ is continuous. Hence, $(\mathcal{M}_0(X),\hat{f})$ is  conjugate to $(\Lambda,\sigma)$.

Note that  $\Lambda$ is not compact in $\Sigma_2$. So, a direct proof for Li-Yorke chaos of  $(\mathcal{M}_0(X),\hat{f})$  is now given
using  conjugacy. Let $S_1=\{x\in\mathcal{M}_0(X): h(x)\in S\}$. Then $S_1$ is uncountable since $h$ is bijective. Let $x_1\neq x_2\in S_1$.
Then there exist two sequences of positive integers $\{n_k\}_{n=1}^{\infty}$ and
$\{m_k\}_{n=1}^{\infty}$ such that
\begin{align*}
\lim_{k\to\infty}\tilde{\rho}\big(h\circ\hat{f}^{n_k}(x_1), h\circ\hat{f}^{n_k}(x_2)\big)=\lim_{k\to\infty}\tilde{\rho}(\sigma^{n_k}\circ h(x_1), \sigma^{n_k}\circ h(x_2))=0,\\
\lim_{k\to\infty}\tilde{\rho}\big(h\circ \hat{f}^{m_k}(x_1), h\circ\hat{f}^{m_k}(x_2)\big)=\lim_{k\to\infty}\tilde{\rho}(\sigma^{n_k}\circ h(x_1), \sigma^{n_k}\circ h(x_2))>0.
\end{align*}
By the continuity of $h$ and $h^{-1}$, we have
\begin{align*}
\lim_{k\to\infty}d(\hat{f}^{n_k}(x_1),\hat{f}^{n_k}(x_2))=0,\;
\lim_{k\to\infty}d(\hat{f}^{m_k}(x_1),\hat{f}^{m_k}(x_2))>0.
\end{align*}
Thus, $S_1$ is a Li-Yorke scrambled subset of $\mathcal{M}_0(X)$,
and so $(\mathcal{M}(X),\hat{f})$ is Li-Yorke chaotic.

Since $(\Lambda,\sigma)$ is distributionally chaotic, and $(\mathcal{M}_0(X), \hat f)$ is  conjugate to $(\Lambda, \sigma)$,
we can similarly show that $(\mathcal{M}_0(X), \hat f)$, and thus $(\mathcal{M}(X), \hat f)$, are distributionally chaotic.
\end{proof}
\fi

Consider the simple  autonomous dynamical system $(X,f)$, where
\begin{align}\label{one-point compactification of integers system}
X=\mathbf{Z}\cup\{\infty\},\quad
		f(n)=\left\{\begin{array}{ll}
		n+1,& n\in\mathbf{Z},\\
		\infty, & n=\infty,
		\end{array}\right.
	\end{align}
and $X$ is considered as  a one-point compactification of integers with  metric $d$.
In \cite{Guirao}, it was proved that $(X,f)$ has no Li-Yorke pair, but $(\mathcal{K}(X),\bar{f})$ is distributionally chaotic (and thus Li-Yorke chaotic). Next, we prove that $(\mathcal{M}(X),\hat{f})$ has no Li-Yorke pair for system \eqref{one-point compactification of integers system}.

\begin{example}\label{one-point compactification of integers no Li-Yorke}
Let  $(X,f)$ be given in  \eqref{one-point compactification of integers system}.
It is to prove  for any $\mu\in\mathcal{M}(X)$, we have
\begin{align}\label{converge in measure metric}
\mathcal{P}_d(\hat f^m(\mu),\delta_{\infty})\longrightarrow0
\end{align}
as $m\to \infty$. Assume that \eqref{converge in measure metric} is true. Then
$\delta_{\infty}$ is a fixed point of $\mathcal{M}(X)$  and  all other points in $\mathcal{M}(X)$ are asymptotic to $\delta_{\infty}$. Thus, $(\mathcal{M}(X),\hat{f})$ has no Li-Yorke pair.

To prove \eqref{converge in measure metric}, we first claim that  for any $\mu\in\mathcal{M}(X)$,
\begin{align}\label{mu-Express}
\mu=\sum_{n\in X}a_n\delta_{n},
\end{align}
where $a_n=\mu(n)\in[0,1]$ for $n\in X$. Since $\sum_{n\in X}a_n=1$ and
 \begin{align}\label{sum-an-deltan}
\sum_{n\in X}a_n\delta_{n}\left(\bigcup_{i=1}^\infty K_i\right)=
\sum_{n\in X}\sum_{i=1}^{\infty}a_n\delta_{n}\left( K_i\right)
=\sum_{i=1}^{\infty}\sum_{n\in X}a_n\delta_{n}\left( K_i\right)
 \end{align}
 for all countable collections   $\{K_{i}\}_{i=1}^{\infty}$ of pairwise disjoint  sets in $\mathcal{B}(X)$,
 we have $\sum_{n\in X}a_n\delta_{n}\in\mathcal{M}(X)$.
For any $A=\bigcup_{k}\{n_k\}\in\mathcal{B}(X)$ with $n_k\in X$, we have $\mu(A)=\mu(\bigcup_{k}\{n_k\})=\sum_k\mu(n_k)=\sum_ka_{n_k}$, and by \eqref{sum-an-deltan}, we have
$
\sum_{n\in X}a_n\delta_{n}(A)=\sum_ka_{n_k}=\mu(A)
$.
This proves \eqref{mu-Express}.

Now, we prove \eqref{converge in measure metric}. By \eqref{mu-Express},
\begin{align*}
&\hat{f}^m(\mu)(A)=\hat{f}^m\left(\sum_{n\in X}a_n\delta_{n}\right)(A)=\left(\sum_{n\in \mathbf{Z}}a_n\delta_{n}+a_{\infty}\delta_{\infty}\right)({f}^{-m}(A))\\
=&\sum_{n\in \mathbf{Z}}a_n\delta_{{f}^{m}(n)}(A)+a_{\infty}\delta_{f^m(\infty)}(A)
=\sum_{n\in \mathbf{Z}}a_n\delta_{m+n}(A)+a_{\infty}\delta_{\infty}(A)
\end{align*}
for any $ m\in\mathbf{N}$ and $A\in\mathcal{B}(X)$. This, along with \eqref{measure-metric-def}, implies that  \eqref{converge in measure metric} holds true if and only if for any fixed $\varepsilon>0$, there exists $N_0\in\mathbf{Z}^+$ such that
\begin{align}\label{converge in measure metric-equivalent condition}
\hat{f}^m(\mu)(A)=\sum_{n\in \mathbf{Z}}a_n\delta_{m+n}(A)+a_{\infty}\delta_{\infty}(A)\leq \delta_{\infty}(A^{\varepsilon})+\varepsilon
\end{align}
for any $m\in\mathbf{Z}\cap(N_0,\infty)$ and $A\in\mathcal{B}(X)$. Since $\sum_{n\in \mathbf{Z}}a_n\leq1$, we have for the fixed $\varepsilon>0$, there exists $ N_1\in\mathbf{Z}^+$ such that
\begin{align}\label{sum-decomposition1}
\left(\sum_{n\in\mathbf{Z}}-\sum_{n\in\mathbf{Z}\cap[-N_1,N_1]}\right)a_n\delta_{m+n}(A)
=\sum_{n\in\mathbf{Z}\setminus[-N_1,N_1]}a_n\delta_{m+n}(A)
\leq\sum_{n\in\mathbf{Z}\setminus[-N_1,N_1]}a_n<{\varepsilon\over2}
\end{align}
for any $m\in\mathbf{N}$ and $A\in\mathcal{B}(X)$. By Lemma $\ref{basic result}$ $(\mathrm{i})$, we have $\mathcal{P}_d(\delta_{m+n},\delta_{\infty})\leq d(m+n,\infty)\longrightarrow0$ as $m\to\infty$ for any fixed $n\in\mathbf{Z}$. Then there exists $N_0\in\mathbf{Z}^+$ such that $\mathcal{P}_d(\delta_{m+n},\delta_{\infty})\leq {\varepsilon\over N_1+1}$ for any $m\in\mathbf{Z}\cap(N_0,\infty)$, and thus,
\begin{align*}
\delta_{m+n}(A)\leq \delta_{\infty}(A^{\varepsilon\over N_1+1})+{\varepsilon\over N_1+1}
\end{align*}
for any $A\in\mathcal{B}(X)$, where $n\in\mathbf{Z}\cap[-N_1,N_1]$.
Then for any $m\in\mathbf{Z}\cap(N_0,\infty)$ and $A\in\mathcal{B}(X)$, we have
\begin{align}\label{sum-decomposition3}
\sum_{n\in\mathbf{Z}\cap[-N_1,N_1]}a_n\delta_{m+n}(A)\leq &\sum_{n\in\mathbf{Z}\cap[-N_1,N_1]}a_n\delta_{\infty}(A^{\varepsilon\over N_1+1})+\sum_{n\in\mathbf{Z}\cap[-N_1,N_1]}a_n{\varepsilon\over N_1+1}\\\nonumber
\leq&\sum_{n\in\mathbf{Z}\cap[-N_1,N_1]}a_n\delta_{\infty}(A^{\varepsilon})+{\varepsilon\over 2},
\end{align}
where we used the fact that $0\leq\sum_{n\in\mathbf{Z}\cap[-N_1,N_1]}a_n\leq1$.
 Then by \eqref{sum-decomposition1}--\eqref{sum-decomposition3}, we have
for any $m\in\mathbf{Z}\cap(N_0,\infty)$ and $A\in\mathcal{B}(X)$,
\begin{align*}\label{sum-decomposition}
&\sum_{n\in\mathbf{Z}}a_n\delta_{m+n}(A)-\sum_{n\in\mathbf{Z}}a_n\delta_{\infty}(A^{\varepsilon})
=\left(\sum_{n\in\mathbf{Z}}-\sum_{n\in\mathbf{Z}\cap[-N_1,N_1]}\right)a_n\delta_{m+n}(A)\\
+&
\sum_{n\in\mathbf{Z}\cap[-N_1,N_1]}a_n(\delta_{m+n}(A)-\delta_{\infty}(A^{\varepsilon}))
+\left(\sum_{n\in\mathbf{Z}\cap[-N_1,N_1]}-\sum_{n\in\mathbf{Z}}\right)a_n\delta_{\infty}(A^{\varepsilon})
<{\varepsilon\over2}+{\varepsilon\over2}+0=\varepsilon.
\end{align*}
Thus, for any $m\in\mathbf{Z}\cap(N_0,\infty)$ and $A\in\mathcal{B}(X)$,
\begin{align*}
\hat{f}^m(\mu)(A)=\sum_{n\in \mathbf{Z}}a_n\delta_{m+n}(A)+a_{\infty}\delta_{\infty}(A)\leq \sum_{n\in\mathbf{Z}}a_n\delta_{\infty}(A^{\varepsilon})+\varepsilon+a_{\infty}\delta_{\infty}(A^{\varepsilon})
=\delta_{\infty}(A^{\varepsilon})+\varepsilon.
\end{align*}
This proves \eqref{converge in measure metric-equivalent condition}.
\end{example}

\begin{remark} (1)
There is another example $(X,f)$ such that both $(X,f)$ and $(\mathcal{M}(X),\hat{f})$ have no Li-Yorke pair, but $(\mathcal{K}(X),\bar{f})$ is distributionally chaotic, where $f$ is the generic  homeomorphism of the Cantor space $X =\{0,1\}^{\mathbf{N}}$. For this example,
 Bernardes and Darji proved that $(X, f)$ has no Li-Yorke pair in Theorem $4.2$ of \cite{Bernardes-Darji2012}, while
Bernardes and Vermersch   confirmed that  $(\mathcal{K}(X), \bar{f})$ is
uniformly distributionally chaotic in Theorem $3.2$ of \cite{Bernardes-Vermersch2015} and then proved that $(\mathcal{M}(X), \hat{f})$ has no Li-Yorke pair in Theorem $1$ of \cite{Bernardes}.

(2)
Let $f$ be a homeomorphism of $X$ and  $(X,f)$ be not Li-Yorke chaotic. Then it follows from Corollary $2.4$ in \cite{BGKM02}  that $h(f)=0$. Then  $h(\hat f)=0$ due to  Theorem $\mathrm{A}$ in \cite{Glasner}. In the above examples, $(\mathcal{M}(X), \hat{f})$ is not Li-Yorke chaotic. It is interesting to study whether $(\mathcal{M}(X), \hat{f})$ is not Li-Yorke chaotic in general.
\end{remark}
\section*{Acknowledgement}
This research was partially supported by Hong Kong Research Grants Council (GRF Grant
CityU11200317),
 Postdoctoral Innovative Talents Support Program
(Grant No. BX20180151) and
China Postdoctoral Science Foundation
(Grant No. 2018M630266).

\end{CJK*}

\end{document}